\documentclass[12pt,reqno]{amsart}
\usepackage{amsaddr}
\usepackage{amssymb,latexsym,amsmath,epsfig,amsthm,mathrsfs}
\usepackage{rotating}
\usepackage{graphicx}
\usepackage{amssymb}
\usepackage{lineno}
\usepackage{enumitem}
\usepackage{cite}
\usepackage[top=3cm, bottom=3cm, left=3cm, right=3cm]{geometry}
\usepackage[usenames]{color}
\usepackage[colorlinks=true,
linkcolor=blue,
filecolor=blue,
citecolor=blue]{hyperref}

\newtheorem{thma}{Theorem}

\newtheorem{lemmaa}{Lemma}

\newtheorem{theorem}{Theorem}[section]

\newtheorem{lemma}[theorem]{Lemma}
\newtheorem{conjecture}{Conjecture}

\theoremstyle{definition}

\theoremstyle{remark}

\theoremstyle{definition}

\numberwithin{equation}{section}



\begin{document}
	
	\title[Restricted Signed Sumsets - II]{Direct and Inverse Problems for Restricted Signed Sumsets - II}
	
	
\author[R K Mistri]{Raj Kumar Mistri$^{*}$}
\address{\em{\small Department of Mathematics, Indian Institute of Technology Bhilai\\
Durg, Chhattisgarh, India\\
e-mail: rkmistri@iitbhilai.ac.in}}


\author[N Prajapati]{Nitesh Prajapati$^{**}$}
\address{\em{\small Department of Mathematics, Indian Institute of Technology Bhilai\\
Durg, Chhattisgarh, India\\
email: niteshp@iitbhilai.ac.in}}

\thanks{$^{*}$Corresponding author}
\thanks{$^{**}$The research of the author is supported by the UGC Fellowship (NTA Ref. No.: 211610023414)}

	\subjclass[2010]{Primary 11P70; Secondary 11B13, 11B75}

	\keywords{sumsets, restricted sumsets, signed sumset, restricted signed sumsets.}

\begin{abstract}
Let $A=\{a_{1},\ldots,a_{k}\}$ be a nonempty finite subset of an additive abelian group $G$. For a positive integer $h$, the {\em restricted $h$-fold signed sumset of $A$}, denoted by $h^{\wedge}_{\pm}A$, is defined as
	$$h^{\wedge}_{\pm}A=\left\lbrace \sum_{i=1}^{k} \lambda_{i} a_{i}: \lambda_{i} \in \left\lbrace -1, 0, 1\right\rbrace \ \text{for} \ i=  1, 2, \ldots, k \ \text{and} \  \sum_{i=1}^{k} \left|\lambda_{i} \right| =h\right\rbrace. $$
A direct problem for the restricted $h$-fold signed sumset is to find the optimal size of $h^{\wedge}_{\pm}A$ in terms of $h$ and $|A|$. An inverse problem for this sumset is to determine the structure of the underlying set $A$ when the sumset has optimal size. While the signed sumsets (which is defined differently compared to the restricted signed sumset) in finite abelian groups has been investigated by Bajnok and Matzke, the restricted $h$-fold signed sumset $h^{\wedge}_{\pm}A$ is not well studied even in the additive group of integers $\Bbb Z$. Bhanja, Komatsu and Pandey studied these  problems for the restricted $h$-fold signed sumset for $h=2, 3$, and $k$, and conjectured some direct and inverse results for $h \geq 4$. In a recent paper, Mistri and Prajapati proved these conjectures completely for the set of positive integers. In this paper, we prove these conjectures for the set of nonnegative integers, which settles all the conjectures completely.
	\end{abstract}

	\maketitle
	

\section{Introduction}
Let $\Bbb Z$ denote the set of integers. For  integers $a$ and $b$ with $a \leq b$, let $[a, b]$ denote the interval of integers $\{n \in \Bbb Z: a \leq n \leq b\}$. Let $|S|$ denote the size of the finite set $S$. An {\em arithmetic progression (A.P.)} of integers with $k$-terms and common difference $d$ is a set $A$ of the form $\{a + id: i = 0, 1, \ldots, k-1\}$, where $a, d \in \Bbb Z$ and $d \neq 0$. Let $G$ be an additive abelian group, and let $A = \{a_1, \ldots, a_k\}$ be a nonempty finite subset of $G$. Let $h$ be a positive integer. Then the {\em $h$-fold sumset of the set $A$}, denoted by $hA$ is defined as
\[hA = \left\{\sum_{i=1}^{k}\lambda_i a_i: \lambda_i \in [0, h] ~\text{for}~ i = 1, \ldots, k ~\text{and}~ \sum_{i=1}^{k}\lambda_i = h\right\}.\]
The {\em restricted $h$-fold sumset of the set $A$}, denoted by $h^{\wedge}A$, is defined as
\[h^{\wedge}A =  \left\{\sum_{i=1}^{k}\lambda_i a_i: \lambda_i \in [0, 1] ~\text{for}~ i = 1,\ldots, k ~\text{and}~ \sum_{i=1}^{k}\lambda_i = h \right\}.\]

The study of the sumsets dates back to Cauchy \cite{cauchy} who proved that if $A$ and $B$ are nonempty subsets of $\Bbb Z_p$, then $|A + B| \geq \min(p, |A| + |B| -1)$, where $A + B = \{a + b: a \in A, b \in B\}$ and $\Bbb Z_p$ is the group of prime order $p$. The result has been known as {\em Cauchy-Davenport Theorem} after Davenport rediscovered this result \cite{dav1, dav2} in $1935$. The Cauchy-Davenport theorem immediately implies that if $A$ is a nonempty subsets of $\Bbb Z_p$, then $|hA| \geq \min(p, h|A| - h + 1)$ for all positive integers $h$. The corresponding theorem for the restricted $h$-fold sumset $h^{\wedge}A$ in $\Bbb Z_p$ is due to Dias da Silva and Hamidoune (see \cite{dias}) who proved using the theory of exterior algebra that if $A$ is a nonempty subsets of $\Bbb Z_p$, then $|h^{\wedge}A| \geq \min(p, h|A| - h^2 + 1)$ for all positive integers $h \leq |A|$. Later this result was reproved by Alon, Nathanson and Ruzsa (see \cite{alon1} and \cite{alon2}) by means of {\em polynomial method} which is a very powerful tool for tackling certain problems in additive combinatorics. The theorem for $h = 2$ is known as {\em Erd\H{o}s - Heilbronn conjecture} which was first conjectured by Erd\H{o}s and Heilbronn (see \cite{erdos}) during $1960$.
	
The following theorem provides the optimal lower bound for the size of restricted $h$-fold sumset $h^{\wedge}A$ in the additive group of integers $\Bbb Z$.
\begin{thma}[{\cite[Theorem 1]{nath1}; \cite[Theorem 1.9]{nath}}]\label{restricted-hfold-direct-thm}
Let $A$ be a nonempty finite set of integers with $|A| = k$ and let $1 \leq h \leq k.$ Then
\begin{equation}\label{res-sum-bound}
  |h^{\wedge}A| \geq hk - h^2 + 1.
\end{equation}
The lower bound in $(\ref{res-sum-bound})$ is best possible.
\end{thma}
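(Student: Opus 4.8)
The plan is to prove the inequality by exhibiting an explicit strictly increasing chain of $hk-h^{2}+1$ distinct elements inside $h^{\wedge}A$, and then to establish sharpness on an arithmetic progression.

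First I would relabel the elements of $A$ so that $a_{1}<a_{2}<\cdots<a_{k}$. Since each $\lambda_{i}\in\{0,1\}$ with $\sum_{i}\lambda_{i}=h$, the set $h^{\wedge}A$ is exactly the set of subset sums $\sum_{i\in I}a_{i}$ over all $h$-element subsets $I\subseteq\{1,\ldots,k\}$ (here the hypothesis $h\le k$ is what guarantees $h^{\wedge}A\neq\emptyset$). The elementary but crucial observation is that replacing an index in such a subset by a strictly larger index not already present strictly increases the corresponding sum.

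Next I would build the chain in $h$ consecutive stages. Start from the minimal element $a_{1}+a_{2}+\cdots+a_{h}$. In stage $1$, successively replace the top index $h$ by $h+1,h+2,\ldots,k$, obtaining $k-h$ strictly larger sums and ending at $a_{1}+\cdots+a_{h-1}+a_{k}$. In stage $2$, keeping the index $k$ frozen, replace the index $h-1$ by $h,h+1,\ldots,k-1$, giving another $k-h$ strictly larger sums. In general, at stage $j$ (for $j=1,\ldots,h$) one freezes the top $j-1$ indices $k-j+2,\ldots,k$ together with the bottom indices $1,\ldots,h-j$, and slides the remaining index from $h-j+1$ up to $k-j+1$; at the end of stage $j$ the active set is $\{1,\ldots,h-j\}\cup\{k-j+1,\ldots,k\}$, which is exactly the starting set of stage $j+1$, so the stages splice together into one genuine chain. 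Each stage contributes $k-h$ new sums, so the whole chain has $1+h(k-h)=hk-h^{2}+1$ pairwise distinct terms lying in $h^{\wedge}A$; this proves $(\ref{res-sum-bound})$. (When $h=k$ the bound reads $1$, consistent with $h^{\wedge}A=\{a_{1}+\cdots+a_{k}\}$.)

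Finally, for sharpness I would take $A=\{0,1,\ldots,k-1\}$, so $|A|=k$. Every $h$-subset sum lies between $\binom{h}{2}=0+1+\cdots+(h-1)$ and $hk-\binom{h+1}{2}=(k-h)+(k-h+1)+\cdots+(k-1)$, and these two bounds differ by exactly $hk-h^{2}$; moreover every integer in this block is attained, since starting from any block of $h$ consecutive integers one reaches any value in the range by shifting one coordinate at a time. Hence $|h^{\wedge}A|=hk-h^{2}+1$ for this $A$, so the bound in $(\ref{res-sum-bound})$ is best possible. I do not expect a genuine obstacle; the one point that needs care is the index bookkeeping in the staged construction — checking at every step that the index being inserted is both new and strictly larger, so that the sums increase strictly and each intermediate set is a legitimate $h$-subset.
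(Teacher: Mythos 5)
Your proof is correct: the staged chain of $h$-element subset sums yields $1+h(k-h)=hk-h^{2}+1$ strictly increasing elements of $h^{\wedge}A$, and the arithmetic-progression example shows sharpness. The paper cites this result from Nathanson without reproducing a proof, and your argument is essentially Nathanson's original one, so there is nothing further to reconcile.
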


Next theorem characterizes the sets $A \subseteq \Bbb Z$ for which the equality holds in $(\ref{res-sum-bound})$.
\begin{thma}[{\cite[Theorem 2]{nath1}; \cite[Theorem 1.10]{nath}}]\label{restricted-hfold-inverse-thm}
Let $k \geq 5$ and let $2 \leq h \leq k - 2.$ If $A$ is a set of $k$ integers such that
\[|h^{\wedge}A| = hk - h^2 + 1,\]
then $A$ is a $k$-term arithmetic progression.
\end{thma}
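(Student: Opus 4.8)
The plan is to normalize, reduce to the case $2\le h\le k/2$ by a complementation symmetry, and induct on $k$; the genuine base of the induction is $h=2$. Write $A=\{a_{1}<a_{2}<\cdots<a_{k}\}$ and put $\Sigma=\sum_{i=1}^{k}a_{i}$. The map $x\mapsto\Sigma-x$ sends the sum over an $h$-subset of indices to the sum over its complement, hence is a bijection from $h^{\wedge}A$ onto $(k-h)^{\wedge}A$; so $|h^{\wedge}A|=|(k-h)^{\wedge}A|$, and since $hk-h^{2}+1=h(k-h)+1$ is symmetric in $h\leftrightarrow k-h$ and ``$A$ is a $k$-term arithmetic progression'' is unaffected, I may assume $2\le h\le k/2$. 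Translating $A$, I also take $a_{1}=0$.

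\emph{Base case $h=2$.} The $2k-3$ strictly increasing sums
\[
a_{1}+a_{2}<a_{1}+a_{3}<\cdots<a_{1}+a_{k}<a_{2}+a_{k}<a_{3}+a_{k}<\cdots<a_{k-1}+a_{k}
\]
lie in $2^{\wedge}A$ and therefore exhaust it. For $i=2,\dots,k-2$, the sum $a_{i}+a_{k-1}\in 2^{\wedge}A$ lies strictly between $a_{i-2}+a_{k}$ (to be read as $a_{1}+a_{k-1}$ when $i=2$; for $i\ge 3$ this lower bound uses the relation already obtained at $i-1$) and $a_{i}+a_{k}$, and the only element of the displayed chain in that open interval is $a_{i-1}+a_{k}$; hence $a_{i}+a_{k-1}=a_{i-1}+a_{k}$, i.e. $a_{i}-a_{i-1}=a_{k}-a_{k-1}$ for $i=2,\dots,k-2$. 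Applying the same argument to the reflected set $a_{k}-A$ (whose $2^{\wedge}$ also has cardinality $2k-3$) yields $a_{i}-a_{i-1}=a_{k}-a_{k-1}$ for $i=3,\dots,k$ as well. These two ranges cover all of $2,\dots,k$ exactly when $k\ge 5$, and then $A$ is a $k$-term A.P.

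\emph{Inductive step, $h\ge 3$ (so $k\ge 2h\ge 6$).} Let $B=A\setminus\{a_{k}\}$ and $C=A\setminus\{a_{1}\}$, each of size $k-1$. Partitioning the $h$-subsets of $A$ according to whether they contain $a_{k}$ gives
\[
h^{\wedge}A=h^{\wedge}B\ \cup\ \bigl(a_{k}+(h-1)^{\wedge}B\bigr).
\]
By Theorem~\ref{restricted-hfold-direct-thm}, $|h^{\wedge}B|\ge h(k-1)-h^{2}+1$ and $|(h-1)^{\wedge}B|\ge(h-1)(k-1)-(h-1)^{2}+1$, while the union has size $hk-h^{2}+1$; hence the overlap $h^{\wedge}B\cap\bigl(a_{k}+(h-1)^{\wedge}B\bigr)$ has size at least $(h-1)(k-h-1)$. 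The decisive step is to improve this to the equality $|h^{\wedge}B|=h(k-1)-h^{2}+1$. Granting it, the inductive hypothesis applies to $B$ (legitimate since $2\le h\le k/2\le k-3=(k-1)-2$ and $k-1\ge 5$), so $B$ is a $(k-1)$-term A.P.; the mirror argument (partition by whether an $h$-subset contains $a_{1}$) shows $C$ is a $(k-1)$-term A.P.; and since $B\cap C=\{a_{2},\dots,a_{k-1}\}$ has $k-2\ge 3$ terms and is both a terminal segment of $B$ and an initial segment of $C$, their common differences agree, so $A=B\cup\{a_{k}\}$ is a $k$-term A.P.

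\emph{The main obstacle.} Everything hinges on the equality $|h^{\wedge}B|=h(k-1)-h^{2}+1$. If instead $|h^{\wedge}B|=h(k-1)-h^{2}+1+t$ with $t\ge 1$, then, because $h^{\wedge}B\subseteq h^{\wedge}A$, exactly $h-t$ of the sums $a_{k}+s$ with $s\in(h-1)^{\wedge}B$ avoid $h^{\wedge}B$; this forces $t\le h-1$ and pushes all but at most $h-1$ elements of $(h-1)^{\wedge}B$ into the overlap, so at least $(h-1)(k-h-1)+t$ of the $h$-element subset sums of $B$ are $\ge a_{k}+a_{1}+\cdots+a_{h-1}=\min\bigl(a_{k}+(h-1)^{\wedge}B\bigr)$. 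The contradiction I would aim for is an upper bound on the number of $h$-subsets of $B$ with sum exceeding that threshold --- equivalently, after reflecting $B$ inside itself, on the number of $(k-1-h)$-subsets of $B$ with sum below the mirrored threshold --- that is incompatible with the bound just obtained. I expect this counting to be the heart of the matter, since $B$ is not yet known to be an arithmetic progression; a robust way around it is to strengthen the induction hypothesis to a stability statement --- controlling the structure of $A$ whenever $|h^{\wedge}A|\le hk-h^{2}+1+O(h)$, in the spirit of Freiman's $3k-4$ theorem --- so that the surplus $t$ survives the induction.
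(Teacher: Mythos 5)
The paper does not actually prove this statement: it is quoted as a known result of Nathanson (Theorem~\ref{restricted-hfold-inverse-thm}), so there is no in-paper proof to compare against and I assess your argument on its own terms. Your complementation symmetry $|h^{\wedge}A|=|(k-h)^{\wedge}A|$, the reduction to $2\le h\le k/2$, and the base case $h=2$ are all correct; the $h=2$ argument (sandwiching $a_i+a_{k-1}$ between consecutive members of the exhaustive $2k-3$ chain, then reflecting in $a_k-A$) is sound and uses $k\ge5$ exactly where you say it does.

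The inductive step, however, contains a genuine gap which you yourself flag: the entire reduction rests on the equality $|h^{\wedge}B|=h(k-1)-h^{2}+1$ for $B=A\setminus\{a_k\}$, and this is asserted, not proved. The decomposition $h^{\wedge}A=h^{\wedge}B\cup\bigl(a_k+(h-1)^{\wedge}B\bigr)$ together with Theorem~\ref{restricted-hfold-direct-thm} only bounds the overlap from below; to rule out a surplus $t\ge1$ you would need an upper bound on the number of $h$-element subset sums of $B$ exceeding $a_k+a_1+\cdots+a_{h-1}$, for a set $B$ about which nothing structural is yet known. That count is precisely the hard content of the inverse theorem, and your proposed remedy (strengthening the induction hypothesis to a Freiman-type stability statement) is a research programme rather than an argument; as written the proof is complete only for $h=2$ and, via your symmetry, $h=k-2$. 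A workable repair is to drop the induction on $k$ altogether and generalize your base case: the proof of Theorem~\ref{restricted-hfold-direct-thm} produces an explicit strictly increasing chain of $h(k-h)+1$ restricted sums (move one index at a time from $\{1,\dots,h\}$ to $\{k-h+1,\dots,k\}$); under the equality hypothesis this chain exhausts $h^{\wedge}A$, and locating suitable off-chain sums strictly between consecutive chain members forces the arithmetic-progression relations for all $h$ at once, exactly as your $i$-th step does for $h=2$. That is essentially the route of the cited original proof.
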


These sumsets and other kind of sumsets have been studied extensively in liteartures (see \cite{nath, tao, freiman, mann} and the references given therein).

Two other variants of these sumsets have appeared recently in the literature in the works of Bajnok, Ruzsa and other researchers \cite{bajnok-ruzsa2003, bajnok-matzke2015,bajnok-matzke2016, bajnok2018, bhanja-pandey2019, bhanja-kom-pandey2021, klopsch-lev2003, klopsch-lev2009}: the {\it $h$-fold signed sumset} $h_{\pm}A$ and the {\it restricted $h$-fold signed sumset} $h^{\wedge}_{\pm}A$ of the set $A$ which are defined as follows:
	\begin{equation*}
		h_{\pm}A = \left\{\sum_{i=1}^{k} \lambda_i a_i : \lambda_i \in [-h, h] ~\text{for}~ i = 1, 2, \ldots ,k ~\text{and}~ \sum_{i=1}^{k} |\lambda_i| =h \right\},
	\end{equation*}
	and
	\begin{equation*}
		h^{\wedge}_{\pm}A = \left\{\sum_{i=1}^{k} \lambda_i a_i : \lambda_i \in [-1, 1] ~\text{for}~ i = 1, 2, \ldots ,k ~\text{and}~ \sum_{i=1}^{k} |\lambda_i| =h \right\}.
	\end{equation*}
It is easy to see that
\[h^{\wedge}A \cup h^{\wedge}(-A) \subseteq h_{\pm}^{\wedge}A \subseteq h_{\pm}^{\wedge}(A \cup -A),\]
and
\[h^{\wedge}_{\pm}A \subseteq h_{\pm}A,\]
For a nonzero integer $c$, we have
\[h^{\wedge}_{\pm}(c \ast A) = c \ast (h^{\wedge}_{\pm}A),\]
where $c \ast A = \{ca: a \in A\}$ and $-A = (-1) \ast A $.

While $h$-fold sumsets are well-studied in the literature, the $h$-fold signed sumsets are not well-studied in the literature. The signed sumsets appear naturally in the literature in many contexts. The $h$-fold signed sumset $h_{\pm}A$ first appeared in the work of Bajnok and Ruzsa \cite{bajnok-ruzsa2003} who studied it in the context of the independence number of a subset of an abelian group $G$ (see also \cite{bajnok2000} and \cite{bajnok2004}), and it also appeared in the work of Klopsch and Lev \cite{klopsch-lev2003,klopsch-lev2009} in the context of dimameter of the group $G$ with respect to the set $A$. 

For a positive integer $m \leq |G|$, define
\[\rho(G, m, h) = \min \{|hA| : A \subseteq G, |A|=m\}\] 
and   
\[\rho_{\pm}(G, m, h) = \min \{|h_{\pm}A| : A \subseteq G, |A|=m\}.\] 
Bajnok and Matzke initiated the detailed study of the function $\rho_{\pm}(G, m, h)$, and they proved that $\rho_{\pm}(G, m, h) = \rho(G, m, h)$, when $G$ is a finite cyclic group \cite{bajnok-matzke2015}. In another work, they studied the cases when $\rho_{\pm}(G, m, h) = \rho(G, m, h)$, where $G$ is an elementary abelian group \cite{bajnok-matzke2016}. In a recent paper \cite{bhanja-pandey2019}, Bhanja and Pandey have studied the direct and inverse problems in the additive group $\mathbb{Z}$ of integers. They obtained the optimal lower bound for the cardinality of the sumset $h_{\pm}A$. They also proved that if the optimal lower bound is achieved, then $A$ must be a certain arithmetic progression.

In case of restricted signed sumset $h^{\wedge}_{\pm}A$, not much is known even in the additive group of integers $\Bbb Z$. The direct problem for the sumset $h_{\pm}^{\wedge}A$ is to find lower bounds for $|h_{\pm}^{\wedge}A|$ in terms of $|A|$. The inverse problem for this sumset is to determine the structure of the finite sets $A$ of for which $|h_{\pm}^{\wedge}A|$ is optimal. In this direction, recently, Bhanja, Komatsu and Pandey \cite{bhanja-kom-pandey2021} solved some cases of both the direct and inverse problems for  $h^{\wedge}_{\pm}A$ in $\Bbb Z$ and conjectured for the rest of the cases. More precisely, they proved the following result.
\begin{thma}[{\cite[Theorem 2.1]{bhanja-kom-pandey2021}}]\label{thm:3}
Let $h$ and $k$ be positive integers with $h \leq k$. Let $A$ be a set of $k$ positive integers. Then
\begin{equation}\label{Bound A}
	\left|h^{\wedge}_{\pm}A\right| \geq   2(hk-h^2)+ \frac{h(h+1)}{2} + 1.
\end{equation}
The lower bound in $\eqref{Bound A}$ is best possible for $h=1, 2$ and $k$.
\end{thma}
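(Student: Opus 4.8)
The plan is to locate inside $h^{\wedge}_{\pm}A$ three explicit subsets --- a ``top'' copy of $h^{\wedge}A$, a ``bottom'' copy of $-h^{\wedge}A$, and a ``middle'' copy of the signed sumset of the $h$ smallest elements of $A$ --- that overlap only in isolated points, and then to add up their sizes by inclusion--exclusion. Write $A=\{a_1<a_2<\cdots<a_k\}$ with every $a_i>0$, and set $m=a_1+\cdots+a_h$ and $M=a_{k-h+1}+\cdots+a_k$, so that $0<m\le M$ (the inequality because $a_{k-h+i}\ge a_i$, with equality only when $h=k$).

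\emph{The three pieces.} Taking all $\lambda_i\ge 0$ gives $h^{\wedge}A\subseteq h^{\wedge}_{\pm}A$, and by Theorem~\ref{restricted-hfold-direct-thm} we have $|h^{\wedge}A|\ge hk-h^2+1$, every element lying in $[m,M]$. Negating all signs, $-h^{\wedge}A\subseteq h^{\wedge}_{\pm}A$ has the same size and lies in $[-M,-m]$. Finally, forcing $\lambda_i=0$ for $i>h$ gives $h^{\wedge}_{\pm}\{a_1,\dots,a_h\}\subseteq h^{\wedge}_{\pm}A$, a set contained in $[-m,m]$, and the crucial claim is that it has at least $\tfrac{h(h+1)}{2}+1$ elements.

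\emph{The middle estimate (the main point).} This claim is exactly the assertion of the theorem in the extremal case $h=k$, and I would prove it directly by a staircase of signed sums. With $S=a_1+\cdots+a_h$, put $\beta_p=S-2(a_{h-p+1}+\cdots+a_h)$ for $0\le p\le h$ (``flip the $p$ largest signs''). Consider the numbers $\beta_p$ $(0\le p\le h)$ together with $\beta_p-2a_i$ for $0\le p\le h-1$ and $1\le i\le h-p-1$. Each of these equals $S$ minus twice the sum over a subset of $\{a_1,\dots,a_h\}$ of the form ``the $p$ largest elements, optionally together with one element strictly smaller than all of them''; ordering these subsets first by cardinality and then by their (comparable) content, a short monotonicity check shows that the corresponding values are pairwise distinct, which gives $(h+1)+\sum_{p=0}^{h-1}(h-p-1)=(h+1)+\tfrac{h(h-1)}{2}=\tfrac{h(h+1)}{2}+1$ distinct elements of $h^{\wedge}_{\pm}\{a_1,\dots,a_h\}$.

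\emph{Assembling and sharpness.} Since $m>0$ the top and bottom pieces are disjoint; since $[m,M]\cap[-m,m]=\{m\}$ and $m$ belongs to both the top and the middle piece, these two meet in exactly one point, and likewise the bottom and middle pieces meet exactly in $\{-m\}$, while the triple intersection is empty. Inclusion--exclusion then gives
\[
|h^{\wedge}_{\pm}A|\ \ge\ (hk-h^2+1)+(hk-h^2+1)+\Big(\tfrac{h(h+1)}{2}+1\Big)-1-1\ =\ 2(hk-h^2)+\tfrac{h(h+1)}{2}+1,
\]
which is \eqref{Bound A}. For sharpness: if $h=1$ then $1^{\wedge}_{\pm}A=A\cup(-A)$ has size $2k$ for every $A$; if $h=k$, the set $\{1,2,\dots,k\}$ makes every element of $k^{\wedge}_{\pm}A$ an integer of the same parity as $\tfrac{k(k+1)}{2}$ in $[-\tfrac{k(k+1)}{2},\tfrac{k(k+1)}{2}]$, of which there are exactly $\tfrac{k(k+1)}{2}+1$; and if $h=2$, the set $\{1,3,5,\dots,2k-1\}$ has $2^{\wedge}A=\{4,6,\dots,4k-4\}$ while the differences $a_i-a_j$ run over all nonzero even integers in $[-(2k-2),2k-2]$, so $2^{\wedge}_{\pm}A=\{\pm2,\pm4,\dots,\pm(4k-4)\}$ has exactly $4k-4$ elements. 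The only non-formal ingredient is the middle estimate --- choosing the right family of signed sums of the $h$ smallest elements and checking distinctness; the rest is bookkeeping of three intervals that, thanks to the positivity of $A$, meet only at $\pm m$.
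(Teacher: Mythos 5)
Your argument is correct. Note that the paper itself gives no proof of this statement: it is imported verbatim as Theorem 2.1 of Bhanja--Komatsu--Pandey, so there is no in-paper proof to compare against. That said, your three-piece decomposition $(-h^{\wedge}A)\cup h^{\wedge}_{\pm}\{a_1,\dots,a_h\}\cup h^{\wedge}A$ with overlaps only at $\pm(a_1+\cdots+a_h)$ is exactly the device this paper uses for its own results (see the containment $(-h^{\wedge}A')\cup h^{\wedge}_{\pm}B\cup h^{\wedge}A'\subseteq h^{\wedge}_{\pm}A$ in the proofs of Theorem \ref{rssn-thm-2} and Lemma \ref{rssn-lem19}), and your middle estimate is the identity $h^{\wedge}_{\pm}\{a_1,\dots,a_h\}=\max(h^{\wedge}_{\pm}\{a_1,\dots,a_h\})-2\ast\Sigma(\{a_1,\dots,a_h\})$ combined with the same staircase of subset sums (the sets $B_{j,1}$ of Lemma \ref{rssn-lem14}, whose sizes likewise total $\frac{h(h+1)}{2}+1$); so your proof is both valid and entirely in the spirit of the techniques used here.
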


	\begin{thma}[{\cite[Theorem 2.3]{bhanja-kom-pandey2021}}]\label{thm:4}
		Let $h \geq 3$ be a positive integer. Let $A$ be the set of $h$ positive integers such that $|h_{\pm}^\wedge A| = \frac{h(h + 1)}{2} + 1$. Then 
		\begin{equation*}
			A =
			\begin{cases}
				\{a_1, a_2, a_1 + a_2\} ~\text{with}~ 0 < a_1 < a_2, &\text{if $h = 3$};\\
				d \ast [1, h] ~\text{for some positive integer}~ d, &\text{if $h \geq 4$.}  
			\end{cases}
		\end{equation*}
	\end{thma}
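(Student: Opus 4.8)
\medskip
\noindent\textbf{Proof proposal.}
Since $|A|=h$, in the definition of $h^{\wedge}_{\pm}A$ each coefficient $\lambda_i$ is forced to equal $\pm1$, so $h^{\wedge}_{\pm}A$ is precisely the set of signed sums $\sum_{i=1}^{h}\varepsilon_ia_i$ with $\varepsilon_i\in\{-1,1\}$. Writing $s=a_1+\cdots+a_h$ and $S=\{i:\varepsilon_i=-1\}$, we have $\sum_i\varepsilon_ia_i=s-2\sigma(S)$, where $\sigma(S)=\sum_{i\in S}a_i$; hence $\sigma\mapsto s-2\sigma$ is an injection from the set of subset sums $\Sigma(A)=\{\sigma(S):S\subseteq A\}$ onto $h^{\wedge}_{\pm}A$, and $|h^{\wedge}_{\pm}A|=|\Sigma(A)|$. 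Thus the hypothesis becomes $|\Sigma(A)|=\tfrac{h(h+1)}{2}+1$, and the statement is now an inverse theorem for the subset sums of a finite set of positive integers.

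Write $A=\{a_1<a_2<\cdots<a_h\}$ and put $A'=A\setminus\{a_h\}$, so $\Sigma(A)=\Sigma(A')\cup(\Sigma(A')+a_h)$. Since $\sigma_{h-1}:=a_1+\cdots+a_{h-1}=\max\Sigma(A')$, the $h$ numbers $\sigma_{h-1}+a_h$ and $\sigma_{h-1}-a_i+a_h$ $(1\le i\le h-1)$ are pairwise distinct, lie in $\Sigma(A')+a_h$, and each exceeds $\sigma_{h-1}$ because $a_h>a_i$; hence they belong to $(\Sigma(A')+a_h)\setminus\Sigma(A')$, and so $|\Sigma(A)|\ge|\Sigma(A')|+h$. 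Iterating this inequality recovers $|\Sigma(B)|\ge\tfrac{|B|(|B|+1)}{2}+1$ for every finite set $B$ of positive integers (i.e.\ Theorem~\ref{thm:3} with $k=h$); applying it to $A'$ and combining with $|\Sigma(A)|\ge|\Sigma(A')|+h$ and the hypothesis forces $|\Sigma(A')|=\tfrac{(h-1)h}{2}+1$, so $A'$ is again extremal, on $h-1$ points. The proof then proceeds by induction on $h$.

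Base case $h=3$: if $S\subsetneq T$ then $\sigma(T)-\sigma(S)=\sigma(T\setminus S)>0$, so a coincidence $\sigma(S)=\sigma(T)$ with $S\ne T$ can only occur for incomparable $S,T$; in a $3$-element set the incomparable pairs are two distinct singletons, two distinct $2$-subsets, and a singleton with its complementary $2$-subset, and checking these leaves only $a_3=a_1+a_2$. Conversely $\{a_1,a_2,a_1+a_2\}$ has exactly seven subset sums, so $h=3$ forces $A=\{a_1,a_2,a_1+a_2\}$ with $0<a_1<a_2$. Now let $h\ge4$ with the theorem known for $h-1$ (for $h=4$ this means the $h=3$ case just proved); by the previous paragraph $A'$ is extremal on $h-1\ge3$ points. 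If $h\ge5$, the induction hypothesis gives $A'=d\ast[1,h-1]$, so $\Sigma(A')=\{0,d,2d,\dots,\tfrac{(h-1)h}{2}d\}$; since $a_h>(h-1)d$, reduction modulo $d$ forces $d\mid a_h$ (otherwise all $\tfrac{(h-1)h}{2}+1>h$ shifted sums would be new), say $a_h=md$ with $m\ge h$, and the number of new sums equals $\min\bigl(m,\tfrac{(h-1)h}{2}+1\bigr)$, which must be $h$, so $m=h$ and $A=d\ast[1,h]$. If $h=4$, then $A'=\{a_1,a_2,a_1+a_2\}$ and $\Sigma(A')=\{0,a_1,a_2,a_1+a_2,2a_1+a_2,a_1+2a_2,2a_1+2a_2\}$ with maximum $2(a_1+a_2)$; since $a_4>a_1+a_2$, the four shifts $a_4+x$ with $x\in\{a_1+a_2,2a_1+a_2,a_1+2a_2,2a_1+2a_2\}$ exceed this maximum and are automatically new, so having exactly four new sums forces $a_4,\ a_4+a_1,\ a_4+a_2\in\Sigma(A')$; checking the three elements of $\Sigma(A')$ exceeding $a_1+a_2$ leaves only $a_4=2a_1+a_2$ together with $a_2=2a_1$ (which makes $a_4+a_1=3a_1+a_2$ coincide with the already-present sum $a_1+2a_2$), i.e.\ $A=a_1\ast[1,4]$. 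This completes the induction.

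The step I expect to be the main obstacle is the explicit subset-sum bookkeeping in the small cases $h=3,4$: once $A'$ is no longer an arithmetic progression, its subset sums do not form an interval, so one must enumerate them and verify that the only extremal configuration is the one forced by $a_2=2a_1$. Everything else reduces either to the one-line inequality $|\Sigma(A)|\ge|\Sigma(A')|+h$ or to the elementary identity $\Sigma(d\ast[1,m])=d\ast[0,\tfrac{m(m+1)}{2}]$.
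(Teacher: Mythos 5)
Your argument is correct, but note that the paper you are working from does not prove this statement at all: Theorem~\ref{thm:4} is quoted verbatim from Bhanja--Komatsu--Pandey \cite[Theorem 2.3]{bhanja-kom-pandey2021}, so there is no in-paper proof to compare against. On its own merits your proof holds up. The opening reduction $|h^{\wedge}_{\pm}A|=|\Sigma(A)|$ when $|A|=h$ is exactly Fact (2) of Section~\ref{section-rssn-aux-lemma} ($h_{\pm}^\wedge A=\max(h_{\pm}^\wedge A)-2\ast\Sigma(A)$), and your layered count of $h$ new subset sums produced by adjoining the largest element is the same device the authors package as the sets $B_{j,1}$ in their Notation and in Lemmas~\ref{rssn-lem14}--\ref{rssn-lem16}; so while the paper outsources this particular theorem, your proof is entirely consonant with its toolkit. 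I checked the delicate points: the inequality $|\Sigma(A)|\ge|\Sigma(A')|+h$ and its telescoping to $\frac{h(h+1)}{2}+1$ force equality at every level, so $A'$ is extremal; the $h=3$ enumeration of incomparable pairs correctly isolates $a_3=a_1+a_2$ as the unique possible coincidence; in the step $h\ge5$ the congruence argument for $d\mid a_h$ uses $(h-1)(h-2)>0$ and the count $\min\bigl(m,\tfrac{(h-1)h}{2}+1\bigr)=h$ with $\tfrac{(h-1)h}{2}+1>h$ correctly pins $m=h$; and in the $h=4$ step the forced set equality $\{a_4,a_4+a_1,a_4+a_2\}=\{2a_1+a_2,a_1+2a_2,2a_1+2a_2\}$, read off in increasing order, yields $a_2=2a_1$ and $A=a_1\ast[1,4]$. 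The one presentational gap is that you never state why exactly one coincidence is needed in the base case (namely $|\Sigma(A)|=7=2^3-1$); that is implicit but worth a sentence. What your route buys over citing the source is a short, uniform induction in which the only case analysis is the two small cases $h=3,4$ where $A'$ fails to be an arithmetic progression.
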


\begin{thma}[{\cite[Theorem 3.1]{bhanja-kom-pandey2021}}]\label{thm:5}
		Let $h$ and $k$ be integer such that $1 \leq h \leq k$. Let $A$ be set of $k$ nonnegative integers such that $0 \in A$. Then
\begin{equation}\label{Bound B}
  |h_{\pm}^\wedge A| \geq 2(hk - h^2) + \frac{h(h - 1)}{2} + 1.
\end{equation} 
		The lower bound in $\eqref{Bound B}$ is best possible for $h = 1, 2$, and $k$. 
	\end{thma}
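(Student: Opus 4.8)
The plan is to use the hypothesis $0\in A$ to peel off the zero and reduce to a set of positive integers. Write $A=\{0\}\cup B$ with $B=\{b_1<b_2<\dots<b_{k-1}\}$ a set of $k-1$ positive integers. In a representation $\sum_i\lambda_i a_i$ of an element of $h^{\wedge}_{\pm}A$, the coefficient attached to $0$ contributes its absolute value to the weight $h$ but nothing to the value, so
\[h^{\wedge}_{\pm}A=h^{\wedge}_{\pm}B\cup(h-1)^{\wedge}_{\pm}B,\]
with the conventions $0^{\wedge}_{\pm}B=\{0\}$ and $j^{\wedge}_{\pm}B=\varnothing$ for $j>|B|$. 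In particular $h^{\wedge}_{\pm}A\supseteq h^{\wedge}A\cup(-h^{\wedge}A)$, and since $0\in A$ every element of $h^{\wedge}A$ is a sum of $h$ distinct nonnegative integers at most one of which is $0$, hence is $\geq b_1+\dots+b_{h-1}>0$ once $h\geq 2$; thus this union is disjoint, and by Theorem~\ref{restricted-hfold-direct-thm} applied to the $k$-element set $A$,
\[|h^{\wedge}_{\pm}A|\ \geq\ |h^{\wedge}A\cup(-h^{\wedge}A)|\ =\ 2|h^{\wedge}A|\ \geq\ 2(hk-h^2+1).\]

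I would first dispose of the boundary cases, which also yield the extremal examples witnessing that the bound is best possible. For $h=1$ we have $1^{\wedge}_{\pm}A=A\cup(-A)$ with $A\cap(-A)=\{0\}$, so $|1^{\wedge}_{\pm}A|=2k-1$. For $h=2$ the asserted bound equals $4k-6=2(2k-3)=2(hk-h^2+1)$, so the displayed inequality already gives it. For $h=k$, every coefficient in the definition of $k^{\wedge}_{\pm}A$ must be $\pm1$ (their absolute values are at most $1$ and sum to $k=|A|$); since one element is $0$, $k^{\wedge}_{\pm}A=\{\sum_{i=1}^{k-1}\varepsilon_i b_i:\varepsilon_i\in\{\pm1\}\}$, and writing $\sum\varepsilon_i b_i=\sum b_i-2\sum_{\varepsilon_i=-1}b_i$ shows this cardinality equals the number of distinct subset sums of $B$. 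A set of $m$ distinct positive integers has at least $\binom{m+1}{2}+1$ distinct subset sums (exhibit the strictly increasing chain $0<b_1<\dots<b_m<b_m+b_1<\dots<b_m+b_{m-1}<b_m+b_{m-1}+b_1<\dots$, of length $\binom{m+1}{2}+1$); with $m=k-1$ this gives $|k^{\wedge}_{\pm}A|\geq\binom{k}{2}+1$, matching the bound. In all three cases $A=d\ast[0,k-1]$ attains equality.

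For the remaining range $3\leq h\leq k-1$ the displayed inequality falls short of the target by exactly $\tfrac{h(h-1)}{2}-1=\binom{h}{2}-1$, so the task reduces to producing $\binom{h}{2}-1$ further elements of $h^{\wedge}_{\pm}A$ that are pairwise distinct and lie outside $h^{\wedge}A\cup(-h^{\wedge}A)$. The natural place to look is the window of values of absolute value $<b_1+\dots+b_{h-1}$, which is automatically disjoint from $\pm h^{\wedge}A$: there one uses the element $0$ to absorb a unit of weight and chooses the remaining $h-1$ coefficients on $B$ with mixed signs so that the signed sum is small (for $h=3$, the two elements $\pm(b_2-b_1)$ already suffice, since $b_2-b_1<b_1+b_2$). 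Alternatively one can route the whole argument through the reduction above: Theorem~\ref{thm:3} applied to the $(k-1)$-element set $B$ gives $|h^{\wedge}_{\pm}B|\geq 2\bigl(h(k-1)-h^2\bigr)+\tfrac{h(h+1)}{2}+1$, which is precisely the target for $A$ minus $h$, so it then suffices to exhibit $h$ elements of $(h-1)^{\wedge}_{\pm}B$ not lying in $h^{\wedge}_{\pm}B$.

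The hard part is exactly this last step: guaranteeing, for an \emph{arbitrary} admissible $A$, that enough small-magnitude mixed-sign elements are genuinely present and distinct. A careless choice fails — for instance when $B$ has small spread one can have $(h-1)^{\wedge}_{\pm}B\subseteq h^{\wedge}_{\pm}B$, so that the needed surplus must instead come from the slack of $|h^{\wedge}_{\pm}B|$ over its bound. This seems to force a split according to how far $A$ is from being extremal: when Theorem~\ref{restricted-hfold-direct-thm} (or Theorem~\ref{thm:3}) is close to tight, Theorem~\ref{restricted-hfold-inverse-thm} pins $A$ near an arithmetic progression and one finishes by a direct, somewhat delicate count of $h^{\wedge}_{\pm}A$ for those almost-structured sets (with the extreme case $h=k-1$ and small $k$ handled separately, since Theorem~\ref{restricted-hfold-inverse-thm} requires $h\leq k-2$); when there is ample slack the surplus elements are easy to locate. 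Carrying out this stability analysis uniformly in $h$ and $k$, and checking the distinctness bookkeeping, is the real content of the proof.
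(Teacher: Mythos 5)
This statement is quoted background (Theorem~3.1 of Bhanja--Komatsu--Pandey \cite{bhanja-kom-pandey2021}); the present paper does not reprove it, so your proposal has to stand on its own. Your treatment of the cases $h=1$, $h=2$ and $h=k$ is correct and complete, as are the identity $h^{\wedge}_{\pm}A=h^{\wedge}_{\pm}B\cup(h-1)^{\wedge}_{\pm}B$, the disjointness of $h^{\wedge}A$ and $-h^{\wedge}A$, and the optimality check on $d\ast[0,k-1]$. But for the range $3\le h\le k-1$ you establish only $|h^{\wedge}_{\pm}A|\ge 2(hk-h^2+1)$ and then explicitly leave the remaining $\binom{h}{2}-1$ elements unproduced: your final paragraph is a plan (and a heavy one, routed through Theorem~\ref{restricted-hfold-inverse-thm} and a stability analysis near arithmetic progressions) rather than an argument. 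That is a genuine gap, since the theorem asserts the bound for all $1\le h\le k$ and the middle range is exactly where the constant $\tfrac{h(h-1)}{2}+1$ has to be earned.

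The gap closes with a construction you almost wrote down. Put $\sigma=b_1+\cdots+b_{h-1}$ and let $T=(h-1)^{\wedge}_{\pm}\{b_1,\dots,b_{h-1}\}$, realized inside $h^{\wedge}_{\pm}A$ by giving the element $0$ coefficient $1$ and the elements $b_h,\dots,b_{k-1}$ coefficient $0$. Then $T=-\sigma+2\ast\Sigma(\{b_1,\dots,b_{h-1}\})$, so $|T|=|\Sigma(\{b_1,\dots,b_{h-1}\})|\ge\binom{h}{2}+1$ by the same subset-sum chain you invoke for $h=k$, and $T\subseteq[-\sigma,\sigma]$. Every element of $h^{\wedge}A$ is a sum of $h$ distinct elements of $A$ at most one of which is $0$, hence is at least $\sigma$; so $h^{\wedge}A$, $-h^{\wedge}A$ and $T$ pairwise intersect only in $\{\sigma\}$ and $\{-\sigma\}$, and $|h^{\wedge}_{\pm}A|\ge 2|h^{\wedge}A|+|T|-2\ge 2(hk-h^2)+\tfrac{h(h-1)}{2}+1$ by Theorem~\ref{restricted-hfold-direct-thm}, uniformly for $2\le h\le k$. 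In other words, the ``window of absolute value less than $b_1+\cdots+b_{h-1}$'' you identified should be filled by the full signed sumset of the $h-1$ \emph{smallest} positive elements together with $0$; no case split, no inverse theorem, and no distinctness bookkeeping beyond the subset-sum bound is required.
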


\begin{thma}[{\cite[Theorem 3.3]{bhanja-kom-pandey2021}}]\label{thm:6}
		Let $h \geq 4$ be a positive integer. Let $A$ be the set of $h$ nonnegative integers with $0 \in A$ such that $|h_{\pm}^\wedge A| = \frac{h(h - 1)}{2} + 1$. Then 
		\begin{equation}
			A =
			\begin{cases}
				\{0, a_1, a_2, a_1 + a_2\} ~\text{with}~ 0 < a_1 < a_2, &\text{if $h = 4$};\\
				d \ast [0, h - 1] ~\text{for some positive integer}~ d, &\text{if $h \geq 5$.}
			\end{cases}
		\end{equation}
	\end{thma}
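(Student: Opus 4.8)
The plan is to obtain Theorem~\ref{thm:6} as a consequence of its positive–integer counterpart, Theorem~\ref{thm:4}, exploiting the fact that in the regime $k=h$ the restricted signed sumset degenerates into a set of plain $\pm$–sums. First I would write $A=\{0,a_{1},\dots,a_{h-1}\}$ with $0<a_{1}<\dots<a_{h-1}$ and put $S=\{a_{1},\dots,a_{h-1}\}$, a set of $h-1$ positive integers. Since $|A|=h$, the defining condition $\sum_{i}|\lambda_{i}|=h$ with each $\lambda_{i}\in\{-1,0,1\}$ and only $h$ indices forces $|\lambda_{i}|=1$ for every $i$; moreover the coefficient multiplying $0$ contributes nothing. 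Hence
\[
h^{\wedge}_{\pm}A=\Bigl\{\,\sum_{i=1}^{h-1}\lambda_{i}a_{i}\;:\;\lambda_{i}\in\{-1,1\}\,\Bigr\}=(h-1)^{\wedge}_{\pm}S ,
\]
the second equality holding because $|S|=h-1$, so $(h-1)^{\wedge}_{\pm}S$ likewise forces all coefficients to be $\pm1$. In particular $|h^{\wedge}_{\pm}A|=|(h-1)^{\wedge}_{\pm}S|$.

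Next I would substitute the hypothesis. Writing the target cardinality as
\[
\frac{h(h-1)}{2}+1=\frac{(h-1)\bigl((h-1)+1\bigr)}{2}+1 ,
\]
the assumption $|h^{\wedge}_{\pm}A|=\frac{h(h-1)}{2}+1$ becomes $|(h-1)^{\wedge}_{\pm}S|=\frac{(h-1)((h-1)+1)}{2}+1$, i.e.\ $S$ is a set of $h-1$ positive integers realising the extremal value for the parameter $h-1$ (cf.\ Theorem~\ref{thm:3}). As $h\ge4$ we have $h-1\ge3$, so Theorem~\ref{thm:4} applies with its ``$h$'' equal to $h-1$: if $h-1=3$ then $S=\{a_{1},a_{2},a_{1}+a_{2}\}$ with $0<a_{1}<a_{2}$, and if $h-1\ge4$ then $S=d\ast[1,h-1]$ for some positive integer $d$. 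Re-introducing $0$ gives $A=\{0,a_{1},a_{2},a_{1}+a_{2}\}$ when $h=4$ and $A=\{0\}\cup d\ast[1,h-1]=d\ast[0,h-1]$ when $h\ge5$, which is exactly the stated conclusion.

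In this route there is no genuine obstacle: once the degeneration $h^{\wedge}_{\pm}A=(h-1)^{\wedge}_{\pm}S$ is recorded, all the real work has been delegated to Theorem~\ref{thm:4}, and the only care needed is in the bookkeeping of the index shift $h\mapsto h-1$ and the accompanying shift of the two cases ($h=4$ matching the ternary base case, $h\ge5$ matching the arithmetic–progression case). Should a self-contained argument be wanted, I would instead work with subset sums: with $T=a_{1}+\dots+a_{h-1}$ and $\Sigma$ the set of all subset sums of $S$, the identity $h^{\wedge}_{\pm}A=T-2\Sigma$ gives $|h^{\wedge}_{\pm}A|=|\Sigma|$; the increasing chain $0<a_{1}<\dots<a_{h-1}<a_{h-1}+a_{1}<\dots$ exhibits $\binom{h}{2}+1$ distinct subset sums (re-proving the bound of Theorem~\ref{thm:5} in this case), and equality forces every subset sum to be one of these chain values. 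There the main obstacle is the induction on $h$: one has to show that $S\setminus\{a_{h-1}\}$ is again extremal, then combine the inductive conclusion with the constraints obtained by matching $a_{1}+a_{2}$ and $a_{1}+a_{h-1}$ against the chain to force $a_{i}=id$ for all $i$ — which is precisely why the reduction to Theorem~\ref{thm:4} is the cleaner path.
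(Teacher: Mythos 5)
This statement is imported verbatim from \cite{bhanja-kom-pandey2021} (Theorem 3.3 there) and the present paper gives no proof of it, so there is no internal argument to compare against. Your reduction is nevertheless correct and clean: since $|A|=h$, the constraint $\sum_i|\lambda_i|=h$ with $\lambda_i\in\{-1,0,1\}$ forces every $|\lambda_i|=1$, the sign attached to $0$ is immaterial, and hence $h^{\wedge}_{\pm}A=(h-1)^{\wedge}_{\pm}(A\setminus\{0\})$ --- exactly the degeneration the paper itself records in Lemma~\ref{rssn-lem8}; rewriting $\frac{h(h-1)}{2}+1$ as $\frac{(h-1)h}{2}+1$ shows $S=A\setminus\{0\}$ is extremal for the parameter $h-1\ge 3$, and Theorem~\ref{thm:4} then delivers precisely the two stated cases under the index shift $h=4\leftrightarrow h-1=3$ and $h\ge 5\leftrightarrow h-1\ge 4$.
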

	
In the same paper, they proved the inverse theorems for $|2_{\pm}^\wedge A|$ also (see \cite[Theorem 2.2, Theorem 2.3, Theorem 3.2, and Theorem 3.3]{bhanja-kom-pandey2021}). It can be verified that the lower bounds in (\ref{Bound A}) is not optimal for $3 \leq h \leq k-1$. For these cases, they conjectured the lower bounds and the inverse results, and proved these conjectures for the case $h = 3$ (see \cite[Theorem 2.5 and Theorem 3.5]{bhanja-kom-pandey2021})). The precise statements of the conjectures are the following:
	
	\begin{conjecture}[{\cite[Conjecture 2.4, Conjecture 2.6]{bhanja-kom-pandey2021}}]\label{Conjecture 1}			
		Let $A$ be a set of $k \geq 4$ positive integers, and let $h$ be an integer with $3 \leq h \leq k-1$. Then
		\begin{equation}\label{Lower bound Conjecture 1}
			\left|h^{\wedge}_{\pm}A\right| \geq 2hk-h^2 + 1.
		\end{equation}
		This lower bound is best possible.

 Moreover, if the equality holds in $(\ref{Lower bound Conjecture 1})$, then $A = d \ast \{1,3,\ldots, 2k-1\}$ for some positive integer $d$.
	\end{conjecture}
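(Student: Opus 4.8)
The plan is to treat the lower bound and the equality case by quite different arguments; the equality case is short and exact, while the lower bound carries the technical weight.

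Write $A=\{a_{1}<a_{2}<\cdots<a_{k}\}$ and set $S=a_{1}+\cdots+a_{h}$, the smallest element of $h^{\wedge}A$. For the lower bound I would first split off the two ``pure‑sign'' blocks: every element of $h^{\wedge}A$ is positive and every element of $h^{\wedge}(-A)=-(h^{\wedge}A)$ is negative, so these blocks are disjoint, each has size at least $hk-h^{2}+1$ by Theorem~\ref{restricted-hfold-direct-thm}, and both avoid the interval $(-S,S)$. This accounts for $2(hk-h^{2}+1)$ elements, so it remains to produce $h^{2}-1$ more elements of $h^{\wedge}_{\pm}A$, which I look for among mixed‑sign sums. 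Using only $a_{1},\dots,a_{h}$ with all coefficients $\pm1$ gives the values $S-2\sigma$ as $\sigma$ runs over the proper nonempty subset sums of $\{a_{1},\dots,a_{h}\}$; since $h$ distinct positive integers have at least $\binom{h+1}{2}+1$ distinct subset sums, this yields at least $\binom{h+1}{2}-1$ distinct elements of $h^{\wedge}_{\pm}A\cap(-S,S)$, leaving a shortfall of exactly $\binom{h}{2}$.

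Making up that shortfall of $\binom{h}{2}$ is the heart of the lower bound, and here I would bring in $a_{h+1}$ through the sums $\pm a_{h+1}+\sum_{i\in P}a_{i}-\sum_{j\in Q}a_{j}$ with $P\sqcup Q=\{1,\dots,h-1\}$, and split according to the gap $g:=a_{h+1}-a_{h}$. When $g$ is large, $h^{\wedge}A$ has no elements in $(S,S+g)$, and these sums deposit at least $\binom{h}{2}$ new values in that window; when $g$ is small, the same sums land in $(-S,S)$ and, once the coincidences with the already‑counted $S-2\sigma$ have been removed (these are controlled by the parity of $g$, with the small‑even case — which includes the extremal configuration itself — requiring passage to $a_{h+2}$ or a small adjustment of the index set), they again supply the missing $\binom{h}{2}$ values. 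The cleanest way to organize the bookkeeping is a trade‑off: setting $\delta=|h^{\wedge}A|-(hk-h^{2}+1)\ge0$, one shows
\[
\bigl|\,h^{\wedge}_{\pm}A\setminus\bigl(h^{\wedge}A\cup h^{\wedge}(-A)\bigr)\,\bigr|\ \ge\ h^{2}-1-2\delta ,
\]
so that any surplus in $|h^{\wedge}A|$ absorbs a deficit in the mixed‑sign count. Establishing this inequality uniformly in $A$ — juggling the gap structure against $\delta$, together with the parity subcases — is the main obstacle and the most calculation‑heavy step.

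For the equality statement, suppose $|h^{\wedge}_{\pm}A|=2hk-h^{2}+1$. Because $A$ consists of positive integers, $A\cap(-A)=\emptyset$, so $|A\cup(-A)|=2k$, and directly from the definitions $h^{\wedge}_{\pm}A\subseteq h^{\wedge}\!\bigl(A\cup(-A)\bigr)$ — a coefficient vector with positive support $P$ and negative support $Q$ is realized by the $h$‑element subset $\{a_{i}:i\in P\}\cup\{-a_{j}:j\in Q\}$. By Theorem~\ref{restricted-hfold-direct-thm} applied to the $2k$‑element set $A\cup(-A)$ one has $\bigl|h^{\wedge}(A\cup(-A))\bigr|\ge h(2k)-h^{2}+1=2hk-h^{2}+1$, so the inclusion is an equality and $h^{\wedge}(A\cup(-A))$ has the minimum possible size. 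Since $2k\ge8\ge5$ and $2\le h\le k-1\le 2k-2$, Theorem~\ref{restricted-hfold-inverse-thm} forces $A\cup(-A)$ to be a $2k$‑term arithmetic progression; being invariant under negation and missing $0$, it must have even common difference $2d$ and equal $\{\pm d,\pm3d,\dots,\pm(2k-1)d\}$, whence $A=d\ast\{1,3,\dots,2k-1\}$. Conversely, for this $A$ one has $h^{\wedge}_{\pm}A=d\ast\{-(2hk-h^{2}),-(2hk-h^{2})+2,\dots,2hk-h^{2}\}$, of size exactly $2hk-h^{2}+1$, so the bound is best possible.
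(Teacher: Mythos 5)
A preliminary remark: this paper does not itself prove Conjecture~\ref{Conjecture 1}; it records that the conjecture was settled in the companion paper \cite{mistri-prajapati2025a}, whose method (visible in the imported Lemmas~\ref{rssp-lem1}--\ref{rssp-lem5}) reduces to the $(h+1)$-element subset $\{a_1,\dots,a_{h+1}\}$ and proceeds by a parity-driven case analysis. Your proposal is a different, more global route, but as written it has two genuine gaps. For the lower bound, your setup is sound: $h^{\wedge}A$ and $h^{\wedge}(-A)$ are disjoint, each of size at least $hk-h^{2}+1$ by Theorem~\ref{restricted-hfold-direct-thm}, and the all-$\pm 1$ sums supported on $\{a_1,\dots,a_h\}$ contribute at least $\binom{h+1}{2}-1$ further elements of $(-S,S)$, leaving a deficit of exactly $\binom{h}{2}$. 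But the step that closes this deficit --- the inequality $\bigl|h^{\wedge}_{\pm}A\setminus\bigl(h^{\wedge}A\cup h^{\wedge}(-A)\bigr)\bigr|\ge h^{2}-1-2\delta$ --- is only asserted, accompanied by a qualitative description of how the gap $a_{h+1}-a_h$ and its parity would be handled. You yourself flag this as the main obstacle, and nothing in the proposal establishes it; the ``small even gap'' case, which contains the extremal set $d\ast\{1,3,\dots,2k-1\}$ itself, is precisely where coincidences between the new sums and the already-counted values $S-2\sigma$ are most severe, so the claim cannot be waved through. The lower bound is therefore not proved.

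The equality case contains an outright logical error. From $h^{\wedge}_{\pm}A\subseteq h^{\wedge}(A\cup(-A))$, $|h^{\wedge}_{\pm}A|=2hk-h^{2}+1$, and the \emph{lower} bound $|h^{\wedge}(A\cup(-A))|\ge 2hk-h^{2}+1$, you conclude that ``the inclusion is an equality.'' That inference runs the wrong way: a lower bound on the cardinality of the larger set cannot force it to coincide with the smaller one. Moreover the inclusion is genuinely strict in general, because $h^{\wedge}(A\cup(-A))$ contains sums that use both $a_i$ and $-a_i$, which are not signed sums of $A$; already for $A=\{1,2\}$ and $h=2$ one has $0\in 2^{\wedge}(A\cup(-A))\setminus 2^{\wedge}_{\pm}A$. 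Consequently the hypothesis $|h^{\wedge}(A\cup(-A))|=2hk-h^{2}+1$ needed to invoke Theorem~\ref{restricted-hfold-inverse-thm} for the $2k$-element set $A\cup(-A)$ is never verified, and the derivation of $A=d\ast\{1,3,\dots,2k-1\}$ collapses. (Your concluding symmetry argument --- a negation-invariant arithmetic progression avoiding $0$ must be $\{\pm d,\pm 3d,\dots,\pm(2k-1)d\}$ --- is fine once that hypothesis is secured, and the verification that the extremal set attains the bound is correct.)
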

	
	\begin{conjecture}[{\cite[Conjecture 3.4]{bhanja-kom-pandey2021}}]\label{Conjecture 2}
		Let $A$ be a set of $k \geq 5$ nonnegative integers with $0 \in A$, and let $h$ be an integer with $3 \leq h \leq k-1$. Then
		\begin{equation}\label{Lower bound Conjecture 2}
			\left|h^{\wedge}_{\pm}A\right| \geq 2hk - h(h+1) + 1.
		\end{equation}
		This lower bound in $(\ref{Lower bound Conjecture 2})$ is best possible.
	\end{conjecture}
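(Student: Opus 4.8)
The plan is to reduce Conjecture~\ref{Conjecture 2} over the nonnegative integers to the corresponding statement over the positive integers, namely Conjecture~\ref{Conjecture 1} (which, by the results quoted in the abstract, is now available as a theorem), by splitting off the element~$0$. Since $h^{\wedge}_{\pm}(c\ast A)=c\ast(h^{\wedge}_{\pm}A)$, we may first normalize $A$ by a dilation. Write $A=\{0\}\cup A'$, where $A'=\{a_{1}<\cdots<a_{k-1}\}$ is a set of $m:=k-1\ge 4$ positive integers. Examining the coefficient $\lambda_{0}$ attached to $0$ in the definition of $h^{\wedge}_{\pm}A$ --- it contributes nothing to the sum but consumes one unit of weight precisely when $|\lambda_{0}|=1$ --- gives the identity
\[
h^{\wedge}_{\pm}A \;=\; h^{\wedge}_{\pm}A' \,\cup\, (h-1)^{\wedge}_{\pm}A',
\]
so that $|h^{\wedge}_{\pm}A|=|h^{\wedge}_{\pm}A'|+|(h-1)^{\wedge}_{\pm}A'\setminus h^{\wedge}_{\pm}A'|$. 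By this identity, Conjecture~\ref{Conjecture 2} is equivalent to the purely positive-integer statement that, for a set $B$ of $m\ge 4$ positive integers and $3\le h\le m$,
\[
|h^{\wedge}_{\pm}B\cup(h-1)^{\wedge}_{\pm}B|\;\ge\;2hm-h(h-1)+1 ,
\]
which I would isolate as the key lemma. (The hypothesis $m\ge4$, i.e.\ $k\ge5$, is genuinely needed: for $B=\{1,2,4\}$ the displayed bound already fails when $m=3$, which is exactly why Conjecture~\ref{Conjecture 2} excludes $k=4$.)

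For the lemma, the first input is that, for $3\le h\le m-1$, Conjecture~\ref{Conjecture 1} applied to $B$ gives $|h^{\wedge}_{\pm}B|\ge 2hm-h^{2}+1$, while for $h=m$ Theorem~\ref{thm:3} gives $|h^{\wedge}_{\pm}B|\ge\tfrac{m(m+1)}{2}+1$. The decisive arithmetical observation is that the target $2hm-h(h-1)+1$ exceeds $2hm-h^{2}+1$ by exactly $h$, so it suffices to gain an extra $h$ --- partly as the excess of $|h^{\wedge}_{\pm}B|$ over the bound in Conjecture~\ref{Conjecture 1}, partly as the number of elements of $(h-1)^{\wedge}_{\pm}B$ outside $h^{\wedge}_{\pm}B$. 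Next one checks that the bound in the lemma is attained by $B=[1,m]$, for which $(h-1)^{\wedge}_{\pm}B\subseteq h^{\wedge}_{\pm}B$ and $h^{\wedge}_{\pm}B$ is a full interval of integers of length $2\left(hm-\binom{h}{2}\right)$; this pins down the extremal configuration and, taking $A=d\ast[0,k-1]$ so that $A'=d\ast[1,k-1]$ and $|h^{\wedge}_{\pm}A|=|h^{\wedge}_{\pm}A'|=2hk-h(h+1)+1$, simultaneously establishes the sharpness asserted in Conjecture~\ref{Conjecture 2}.

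The heart of the argument, and the step I expect to be the main obstacle, is controlling the overlap $h^{\wedge}_{\pm}B\cap(h-1)^{\wedge}_{\pm}B$, which I would organize according to how close $B$ is to being extremal for Conjecture~\ref{Conjecture 1}. If $B$ attains equality there, then by the inverse part of Conjecture~\ref{Conjecture 1} we have $B=d\ast\{1,3,\ldots,2m-1\}$; every element of $h^{\wedge}_{\pm}B$ is then congruent to $dh$ and every element of $(h-1)^{\wedge}_{\pm}B$ to $d(h-1)$ modulo $2d$, so the two sumsets are disjoint and their sizes simply add, overshooting the bound comfortably (using Conjecture~\ref{Conjecture 1} once more, now with exponent $h-1$, for the second summand, or Theorem~\ref{thm:3} when $h=3$). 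If $B$ is not of this form, I would produce the missing elements near the top: starting from the sum $b_{m}+b_{m-1}+\cdots+b_{m-h+3}$ of the largest $h-1$ terms of $B$, sliding its smallest summand down through the smaller elements of $B$, and flipping one sign, yields an explicit decreasing family of elements of $(h-1)^{\wedge}_{\pm}B$ sitting just below its maximum; comparing this family with $\max h^{\wedge}_{\pm}B=b_{m}+\cdots+b_{m-h+1}$ and with the structure of $h^{\wedge}_{\pm}B$ near its maximum shows that enough of them lie outside $h^{\wedge}_{\pm}B$, the shortfall in the residual near-arithmetic-progression configurations being absorbed by the strict excess of $|h^{\wedge}_{\pm}B|$ over $2hm-h^{2}+1$. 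The case $h=m$ is handled the same way, with $(h-1)^{\wedge}_{\pm}B$ now carrying the main weight, and again matches the extremal example $[1,m]$ exactly.

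I anticipate that this last overlap count will fan out into several subcases according to the gap pattern of $B$ near its top element and its residues modulo~$2$ --- this is where the $m\ge 4$ hypothesis is really used and where the delicate bookkeeping lives --- whereas everything else (the reduction identity, the numerics, and the extremal example) is routine. The trade-off one must make precise is that whenever $B$ is close enough to the progression $[1,m]$ that $(h-1)^{\wedge}_{\pm}B$ contributes few new elements, the quantity $|h^{\wedge}_{\pm}B|$ must exceed the Conjecture~\ref{Conjecture 1} bound by a compensating amount; quantifying this balance is the crux of the proof.
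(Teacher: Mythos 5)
Your reduction identity $h^{\wedge}_{\pm}A = h^{\wedge}_{\pm}A' \cup (h-1)^{\wedge}_{\pm}A'$ for $A=\{0\}\cup A'$ is correct, the arithmetic converting the target into $|h^{\wedge}_{\pm}B\cup(h-1)^{\wedge}_{\pm}B|\ge 2hm-h(h-1)+1$ with $m=k-1$ checks out, and your two boundary examples ($B$ an interval, where $h^{\wedge}_{\pm}B$ alone already fills the target, and $B=d\ast\{1,3,\ldots,2m-1\}$, where parity makes the two sumsets disjoint) are both right. This is also a genuinely different reduction from the paper's: the paper uses Lemma~\ref{rssn-basic-lem1} to localize everything to the $(h+1)$-element initial segment $B=\{a_1,\ldots,a_{h+1}\}$ containing $0$ and then proves $|h^{\wedge}_{\pm}B|\ge h^2+h+1$ (Theorem~\ref{rssn-thm-3}), which is what all of Section~\ref{section-rssn-aux-lemma} is devoted to.

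There is, however, a genuine gap: your key lemma --- that the union gains the missing $h$ elements over the Conjecture~\ref{Conjecture 1} bound --- is only sketched, and the sketch does not work as stated. The elements of $(h-1)^{\wedge}_{\pm}B$ you propose to exhibit ``just below its maximum'' sit at height roughly $\max(h^{\wedge}_{\pm}B)-b_{m-h+1}$, which for non-extremal $B$ lies deep inside the range where $h^{\wedge}_{\pm}B$ is typically very dense (for $B$ near an interval it contains every integer there), so none of these elements need be new; conversely, in the all-odd case where they are automatically new you do not need them. Everything therefore rests on the compensation principle you state at the end --- the closer $B$ is to making $(h-1)^{\wedge}_{\pm}B$ redundant, the more $|h^{\wedge}_{\pm}B|$ must exceed $2hm-h^2+1$ --- and that is not a consequence of Conjecture~\ref{Conjecture 1} or of its inverse part (which only identifies the exact-equality set $d\ast\{1,3,\ldots,2m-1\}$); it is a new quantitative stability statement whose proof would require a gap-pattern and parity case analysis at least as heavy as the one the paper actually performs. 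The case $h=m$ is even more delicate: there $|h^{\wedge}_{\pm}B|\ge\frac{m(m+1)}{2}+1$ falls far short of the target $m^2+m+1$, and crude counting certifies only the two extremes $\pm(b_1+\cdots+b_m)$ of $h^{\wedge}_{\pm}B$ as lying outside $(h-1)^{\wedge}_{\pm}B$. So the proposal is a coherent programme with a correct skeleton, but the theorem is not proved.
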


\begin{conjecture}[{\cite[Conjecture 3.7]{bhanja-kom-pandey2021}}]\label{Conjecture 3}
		Let $A$ be a set of $k \geq 5$ nonnegative integers with $0 \in A$, and let $h$ be an integer with $3 \leq h \leq k-1$. If
		\begin{equation}\label{Lower bound Conjecture 3}
			\left|h^{\wedge}_{\pm}A\right| = 2hk - h(h+1) + 1,
		\end{equation}
then $A = d \ast [0,k-1]$ for some positive integer $d$.
\end{conjecture}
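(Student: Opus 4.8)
The plan is to normalize $A$, then run an induction on $k$ (with $h$ fixed) in which the largest element of $A$ is peeled off. The two external inputs are the direct bound of Conjecture~\ref{Conjecture 2} (established earlier in this paper) and the positive-integer results behind Conjecture~\ref{Conjecture 1} (both its direct and its inverse part, due to Mistri and Prajapati); since the case $h=3$ is already settled by Bhanja, Komatsu and Pandey, we may also assume $h\ge 4$. Since $h^\wedge_\pm(c\ast A)=c\ast(h^\wedge_\pm A)$ for every nonzero $c$, we divide $A$ by $d=\gcd(A\setminus\{0\})$ and assume $A=\{a_0=0<a_1<\dots<a_{k-1}\}$ with $\gcd(a_1,\dots,a_{k-1})=1$; it then suffices to prove $a_i=i$ for all $i$. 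Put $B=A\setminus\{0\}$ and $A'=A\setminus\{a_{k-1}\}$. Because the coefficient of $0$ adds nothing to a signed sum while still counting towards the height, we have the two decompositions
\[
h^\wedge_\pm A \;=\; h^\wedge_\pm B \,\cup\, (h-1)^\wedge_\pm B \;=\; h^\wedge_\pm A' \,\cup\, \bigl(a_{k-1}+(h-1)^\wedge_\pm A'\bigr) \,\cup\, \bigl(-a_{k-1}+(h-1)^\wedge_\pm A'\bigr).
\]
Moreover $h^\wedge_\pm A$ is symmetric about $0$ and contained in $[-M_A,M_A]$ with $M_A=a_{k-1}+\dots+a_{k-h}\ge (k-1)+\dots+(k-h)=hk-\tfrac{h(h+1)}{2}=:M_0$, and $M_A=M_0$ iff the $h$ largest elements of $A$ form the block $\{k-h,\dots,k-1\}$ (which, with $\gcd(a_i)=1$, already forces $A=[0,k-1]$). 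Hence it is enough to prove $M_A=M_0$ together with $|h^\wedge_\pm A|=2M_0+1$: then $h^\wedge_\pm A\subseteq[-M_0,M_0]$ has $2M_0+1$ elements, so $h^\wedge_\pm A=[-M_0,M_0]$ and $M_A=M_0$ completes the argument.

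\emph{Base case $k=h+1$.} Here $|B|=h$, so $h^\wedge_\pm B$ is the full-support signed sumset, controlled by Theorems~\ref{thm:3} and~\ref{thm:4}, while $(h-1)^\wedge_\pm B$ is controlled by the positive-integer bound and inverse result with parameter $h-1$; the target is $|h^\wedge_\pm A|=h^2+h+1$. I would use the decomposition $h^\wedge_\pm A=h^\wedge_\pm B\cup(h-1)^\wedge_\pm B$ together with $|h^\wedge_\pm B|\ge\binom{h+1}{2}+1$, $|(h-1)^\wedge_\pm B|\ge h^2$: if these pieces were disjoint their sizes would overshoot $h^2+h+1$, so they overlap in at least $\binom{h}{2}$ points. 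But a point of the intersection is simultaneously a signed sum of all of $B$ and, dropping one summand, a signed sum of $h-1$ of them, and subtracting these imposes strong parity/congruence restrictions on the $a_i$ — in particular it excludes $B=d\ast\{1,3,\dots,2h-1\}$, where the two pieces lie in opposite residue classes mod $2$ and the intersection is empty, so $|h^\wedge_\pm A|\ge\binom{h+1}{2}+1+h^2>h^2+h+1$. Feeding this analysis back into the inverse parts of Theorem~\ref{thm:4} and of Conjecture~\ref{Conjecture 1} leaves only $B=[1,h]$, i.e. $A=[0,h]$.

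\emph{Inductive step.} Take $k\ge h+2$, so $3\le h\le k-2=|A'|-1$ and $|A'|\ge 5$, and assume the statement for $k-1$. Conjecture~\ref{Conjecture 2} applied to $A'$ gives $|h^\wedge_\pm A'|\ge 2h(k-1)-h(h+1)+1$. The key step is to show that the translates $\pm a_{k-1}+(h-1)^\wedge_\pm A'$ contribute at least $2h$ elements outside $h^\wedge_\pm A'$: the two obvious ones are $\pm M_A$, which exceed $\max h^\wedge_\pm A'=a_{k-2}+\dots+a_{k-1-h}$ because $a_{k-1}-a_{k-1-h}\ge h$, and one must push the count up to $2h$ using the elements of $(h-1)^\wedge_\pm A'$ lying near its extremes. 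Granting this,
\[
2hk-h(h+1)+1 \;=\; |h^\wedge_\pm A| \;\ge\; |h^\wedge_\pm A'|+2h \;\ge\; \bigl(2h(k-1)-h(h+1)+1\bigr)+2h \;=\; 2hk-h(h+1)+1,
\]
so equality holds throughout; in particular $|h^\wedge_\pm A'|$ is minimal, whence $A'=[0,k-2]$ by the inductive hypothesis. Using the full-interval descriptions of $h^\wedge_\pm[0,k-2]$ and $(h-1)^\wedge_\pm[0,k-2]$ (obtained en route to the direct bound), all three sets in the second decomposition are intervals symmetric about $0$, so their union is a single symmetric interval whose radius equals $\max\bigl(a_{k-1}+\max(h-1)^\wedge_\pm[0,k-2],\ \max h^\wedge_\pm[0,k-2]\bigr)$; setting this radius equal to $M_0$ forces $a_{k-1}=k-1$, hence $A=[0,k-1]$.

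\emph{Main obstacle.} Essentially all the difficulty lies in two places. First, the ``at least $2h$ new elements'' estimate in the inductive step: one must rule out configurations in which $A'$ is far from an interval yet the translates $\pm a_{k-1}+(h-1)^\wedge_\pm A'$ fill in cheaply — that is, show that equality in the displayed chain cannot hold unless $A'$ is already extremal. Second, the base case, where the point is that adjoining $0$ turns the \emph{positive}-integer minimizer (the odd progression $d\ast\{1,3,\dots,2h-1\}$) into a non-minimizer: the extra set $(h-1)^\wedge_\pm B$ that must now also be covered is, for an odd progression, parity-disjoint from $h^\wedge_\pm B$, so the odd progression pays a penalty of order $h^2$ and only the consecutive block survives. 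Supporting both are the routine but necessary lemmas that $h^\wedge_\pm$ of a block of consecutive integers is a full interval of integers, and the bookkeeping that keeps the regime $h=k-1$ separate from $h\le k-2$ throughout.
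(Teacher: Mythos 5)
Your proposal has a fatal problem before any of its details can be assessed: the statement you are proving is false, and the paper says so explicitly. For $h=4$, $k=5$ the set $A=\{0,1,2,4,6\}$ satisfies $|4^{\wedge}_{\pm}A|=21=2hk-h(h+1)+1$ but is not an arithmetic progression (this is recorded in the remark after Conjecture~\ref{Conjecture 3} and verified in Lemma~\ref{rssn-lem14}); the paper therefore proves the \emph{corrected} statement, Theorem~\ref{rssn-thm-2}, which carries the extra alternative $A=a_2\ast\{0,1,2,4,6\}$ in that one case. The error in your argument sits in the base case $k=h+1$. Your counting there is fine as far as it goes — with $B=A\setminus\{0\}$ one does get $|h^{\wedge}_{\pm}B|\ge\tfrac{h(h+1)}{2}+1$ and $|(h-1)^{\wedge}_{\pm}B|\ge h^2$, forcing an overlap of at least $\binom{h}{2}$ — but the inference that this overlap ``excludes everything except $B=[1,h]$'' is not justified and is in fact wrong: for $B=\{1,2,4,6\}$ one has $|4^{\wedge}_{\pm}B|=14$, $|3^{\wedge}_{\pm}B|=19$, and an overlap of $12\ge\binom{4}{2}$, giving $|4^{\wedge}_{\pm}A|=21$ exactly. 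Large overlap does not impose the parity rigidity you hope for, because the two pieces need not lie in complementary residue classes once $B$ is not an odd progression. The paper's treatment of this base case is not a one-line parity argument but an extended case analysis on the parities of $a_2,a_3$ and on the successive gaps $a_{i+1}-a_i$ (Lemmas~\ref{rssn-lem8}--\ref{rssn-lem17}), precisely because several near-extremal configurations such as $\{0,1,2,4,6\}$ and $\{0,1,2,3,5\}$ have to be separated by hand.

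The inductive step also contains an unproved claim that carries essentially all of its weight: that the translates $\pm a_{k-1}+(h-1)^{\wedge}_{\pm}A'$ contribute at least $2h$ elements outside $h^{\wedge}_{\pm}A'$. You flag this yourself as the main obstacle, but without it the chain of inequalities collapses, so as written the step is a restatement of the difficulty rather than a proof. For comparison, the paper avoids this top-element peeling entirely: it isolates the bottom block $B=\{a_1,\ldots,a_{h+1}\}$, uses the decomposition $(-h^{\wedge}A')\cup h^{\wedge}_{\pm}B\cup h^{\wedge}A'$ with $A'=A\setminus\{0\}$ together with Nathanson's direct and inverse theorems (Theorems~\ref{restricted-hfold-direct-thm} and~\ref{restricted-hfold-inverse-thm}) to force $|h^{\wedge}_{\pm}B|=h^2+h+1$ and $A'$ to be an arithmetic progression, and then invokes Lemma~\ref{thm:10} to propagate the structure of $B$ to all of $A$, with a separate argument (Lemma~\ref{rssn-lem19}) for $h=4$. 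If you want to salvage your outline, you must at minimum restrict to $h\ge 5$ (or build in the exceptional set), prove the $2h$-new-elements lemma, and replace the base-case parity heuristic with a genuine classification.
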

	
Mohan, Mistri and Pandey confirmed the conjecture for $h =4$, and they also proved the conjectures for certain special types of sets, including arithmetic progression \cite{mmp2024}. The authors proved Conjecture \ref{Conjecture 1} in an earlier paper \cite{mistri-prajapati2025a}. We remark that Conjecture \ref{Conjecture 3} is not true for $h = 4$ and $k = 5$ which can be seen by taking the set $A = \{0, 1, 2, 4, 6\}$.

In this paper, we prove Conjecture \ref{Conjecture 2}, correct Conjecture \ref{Conjecture 3} and prove it. More precisely we prove the following two theorems. 
	
\begin{theorem}\label{rssn-thm-1}
		Let $h$ and $k$ be positive integers such that $3 \leq h \leq k - 1$ and $k \geq 5$. Let $A = \{a_1, a_2, \ldots, a_k\}$ be a set of nonnegative integers such that $0 = a_1 < a_2 < \cdots < a_k$. Then
		\begin{equation}\label{rssn-thm-1-eq1}
			|h_{\pm}^\wedge A| \geq 2hk - h(h + 1) + 1.
		\end{equation}
		The lower bound in \eqref{rssn-thm-1-eq1} is best possible.
	\end{theorem}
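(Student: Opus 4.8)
Using $h^{\wedge}_{\pm}(d\ast A)=d\ast h^{\wedge}_{\pm}A$, it is enough to check that $A=[0,k-1]$ attains the bound. Put $M:=hk-\tfrac{h(h+1)}{2}$, which is exactly the sum of the $h$ largest elements of $A$, so $\max h^{\wedge}_{\pm}A=M$. Since $a_1=0$ adds nothing to a signed sum while contributing $1$ to $\sum_i|\lambda_i|$, one has
\[
h^{\wedge}_{\pm}A=h^{\wedge}_{\pm}[1,k-1]\cup(h-1)^{\wedge}_{\pm}[1,k-1].
\]
I would then show this set equals $[-M,M]$, giving $|h^{\wedge}_{\pm}A|=2M+1=2hk-h(h+1)+1$. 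When $h\le k-2$ not all of the $k-1$ coefficients on $[1,k-1]$ are forced to be nonzero, so there is no parity obstruction, and an easy induction on $h$ (descend from the top value $M$ to its neighbours by a single swap or sign flip of one coefficient) gives $h^{\wedge}_{\pm}[1,k-1]=[-M,M]$ already. When $h=k-1$ the set $h^{\wedge}_{\pm}[1,k-1]$ is confined to a single residue class mod $2$ and reaches $\pm M$, while $(h-1)^{\wedge}_{\pm}[1,k-1]$ is a full interval of slightly smaller radius that supplies precisely the other residue class, so the union is again $[-M,M]$. (The hypothesis $h\ge3$ is exactly what makes $0\in h^{\wedge}_{\pm}A$.)

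\textbf{A first estimate for the lower bound.} Write $A=\{0=a_1<a_2<\cdots<a_k\}$ and $A'=A\setminus\{0\}=\{a_2,\dots,a_k\}$, a set of $k-1$ positive integers, so that again $h^{\wedge}_{\pm}A=h^{\wedge}_{\pm}A'\cup(h-1)^{\wedge}_{\pm}A'$. Since $h^{\wedge}_{\pm}A=-h^{\wedge}_{\pm}A$, it suffices to exhibit $M=hk-\tfrac{h(h+1)}{2}$ distinct \emph{positive} elements of $h^{\wedge}_{\pm}A$: then $|h^{\wedge}_{\pm}A|\ge 2M+1$ if $0\in h^{\wedge}_{\pm}A$, and $|h^{\wedge}_{\pm}A|$ is even, hence $\ge 2M+2$, otherwise. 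For a first batch, note $\max(h-1)^{\wedge}_{\pm}A'=a_{k-h+2}+\cdots+a_k=:S$, while the $k-h$ numbers $a_j+a_{k-h+2}+\cdots+a_k$ with $2\le j\le k-h+1$ lie in $h^{\wedge}_{\pm}A'$, are pairwise distinct, and all exceed $S$; together with their negatives and with $(h-1)^{\wedge}_{\pm}A'\subseteq[-S,S]$ these give $|(h-1)^{\wedge}_{\pm}A'|+2(k-h)$ distinct elements of $h^{\wedge}_{\pm}A$. Estimating $|(h-1)^{\wedge}_{\pm}A'|$ by the already proven Conjecture~\ref{Conjecture 1} when $h\ge4$ yields
\[
|h^{\wedge}_{\pm}A|\ \ge\ \bigl(2(h-1)(k-1)-(h-1)^{2}+1\bigr)+2(k-h)\ =\ 2hk-h^{2}-2h+2,
\]
which falls short of the target $2hk-h^{2}-h+1$ by $h-1$ (when $h=3$ the same scheme, with Theorem~\ref{thm:3} in place of Conjecture~\ref{Conjecture 1}, leaves a gap of $h$ instead).

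\textbf{Closing the gap --- the main obstacle.} Recovering these last $h-1$ (or $h$) elements is the crux. Two resources remain unused: the ``interior'' of $h^{\wedge}_{\pm}A'$ --- its elements lying in $[-S,S]$ --- and the translates $\pm a_k+(h-1)^{\wedge}_{\pm}(A\setminus\{a_k\})$, which also lie in $h^{\wedge}_{\pm}A$. When $a_k$ is much larger than $a_{k-1}$ the latter already force many more than $2h$ elements beyond $\pm S$ and the bound is comfortably met; the delicate regime is when the gaps $a_{i+1}-a_i$ are all comparable, i.e.\ when $A$ is close to an arithmetic progression, and then the missing elements must be located inside genuine gaps of $h^{\wedge}_{\pm}A'$ (for an arithmetic progression this is exactly what occurs near $\pm S$, mirroring the sharpness computation). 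I would organise the whole argument as an induction on $k$: the base case $k=5$ (so $h\in\{3,4\}$) is handled directly, and the step from $k-1$ to $k$ deletes $a_k$ and shows at least $2h$ new elements appear. The endpoint $h=k-1$ lies outside this induction --- deleting $a_k$ then removes too much --- and must be treated separately via the decomposition $h^{\wedge}_{\pm}A=h^{\wedge}_{\pm}A'\cup(h-1)^{\wedge}_{\pm}A'$ with $|A'|=k-1=h$, balancing the Conjecture~\ref{Conjecture 1} bound on $(h-1)^{\wedge}_{\pm}A'$ against the contribution of the full signed sums $h^{\wedge}_{\pm}A'$ in the gaps of the former. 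In every case the one genuinely delicate point is to make the counts meet $2hk-h(h+1)+1$ \emph{exactly} in these near-extremal configurations; this is where the careful gap analysis of \cite{mistri-prajapati2025a} is required, and the rest is bookkeeping.
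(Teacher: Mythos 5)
There is a genuine gap: your lower-bound argument is not completed. Your own accounting stops at $|h^{\wedge}_{\pm}A|\ge 2hk-h^{2}-2h+2$, which is $h-1$ short of the claimed bound, and the mechanism you propose for recovering those $h-1$ missing elements (an induction on $k$ deleting $a_k$, plus a "careful gap analysis" near $\pm S$ in the near-arithmetic-progression regime) is only described, not carried out. That missing step is not bookkeeping — it is where essentially all of the difficulty of the theorem resides. In the paper the entire Section 2 (Lemmas \ref{rssn-lem8} through \ref{rssn-lem23}, a long parity-and-structure case analysis on $a_2,a_3,a_4$ and on the consecutive differences $a_{i+1}-a_i$) exists precisely to supply the elements your count cannot see. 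Moreover, the reduction you chose makes the shortfall structural rather than accidental: peeling off $0$ and applying the positive-integer bound of Conjecture~\ref{Conjecture 1} to $(h-1)^{\wedge}_{\pm}A'$ can never by itself reach $2hk-h(h+1)+1$, because equality in that conjecture forces $A'=d\ast\{1,3,\dots,2k-3\}$ while equality in the present theorem forces $A=a_2\ast[0,k-1]$; the two extremal configurations are incompatible, so a genuinely new argument (not a refinement of the same count) is needed in the tight cases.

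The paper's route is different and worth comparing. It does not induct on $k$ by removing the largest element; instead it invokes Lemma~\ref{rssn-basic-lem1} (from \cite{mmp2024}), which reduces the whole theorem in one step to the set $B=\{a_1,\dots,a_{h+1}\}$ of the $h+1$ \emph{smallest} elements: any surplus $t$ in $|h^{\wedge}_{\pm}B|\ge h^2+h+1+t$ transfers to $|h^{\wedge}_{\pm}A|\ge 2hk-h^2-h+1+t$. The substantive content is then Theorem~\ref{rssn-thm-3}, the base case $k=h+1$, proved by the auxiliary lemmas. If you want to salvage your outline, you would either need to prove an analogue of Lemma~\ref{rssn-basic-lem1} and then still face the $k=h+1$ case head-on, or actually execute the gap analysis you defer — neither of which is present. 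Your sharpness discussion, by contrast, is essentially fine, though heavier than necessary: once the lower bound is known, it suffices to observe $h^{\wedge}_{\pm}[0,k-1]\subseteq\bigl[-M,M\bigr]$ with $M=hk-\tfrac{h(h+1)}{2}$, which forces equality, rather than verifying directly that the sumset fills the whole interval.
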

	\begin{theorem}\label{rssn-thm-2}
		Let $h$ and $k$ be positive integers such that $3 \leq h \leq k - 1$ and $k \geq 5$. Let $A = \{a_1, a_2, \ldots, a_k\}$ be a set of nonnegative integers such that $0 = a_1 < a_2 < \cdots < a_k$. If
		\[|h_{\pm}^\wedge A| = 2hk - h(h + 1) + 1,\]
		then 
		\begin{equation*}
			A =
			\begin{cases}
				a_2 \ast [0, 4] ~\text{or}~ a_2 \ast \{0, 1, 2, 4, 6\}, &\text{if $h = 4$, $k = 5$};\\
				a_2 \ast [0, k - 1], &\text{otherwise.}  
			\end{cases}
		\end{equation*}
	\end{theorem}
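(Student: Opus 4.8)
The plan is to prove the inverse result Theorem~\ref{rssn-thm-2} by bootstrapping from the direct bound in Theorem~\ref{rssn-thm-1} and the already-established inverse result Theorem~\ref{thm:6} (the case $h = k$, after a reduction). First I would set up the normalization $0 = a_1 < a_2 < \cdots < a_k$ and, using the dilation invariance $h^{\wedge}_{\pm}(c \ast A) = c \ast (h^{\wedge}_{\pm}A)$ together with $\gcd$-reduction, assume $\gcd(a_2, \ldots, a_k) = 1$ so that the goal becomes showing $A = [0, k-1]$ (or one of the two exceptional sets when $h = 4$, $k = 5$). The main engine will be a careful analysis of the ``boundary'' elements of $h^{\wedge}_{\pm}A$: the largest elements are obtained by choosing $\lambda_i = 1$ on the top $h$ indices, giving sums like $a_k + a_{k-1} + \cdots + a_{k-h+1}$, and then decrementing; symmetrically for the most negative elements. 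Counting these extremal sums and comparing with the exact value $2hk - h(h+1) + 1$ forces tight spacing conditions among the $a_i$.

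The key steps, in order, are as follows. Step 1: Establish a ``peeling'' lemma — if $|h^{\wedge}_{\pm}A|$ is minimal for $A$, then for a suitable sub-configuration (e.g.\ removing $a_k$, or restricting to $\{0, a_2, \ldots, a_{k-1}\}$, or to $\{a_2, \ldots, a_k\}$ shifted) the relevant sumset is also of minimal or near-minimal size, so that an inductive hypothesis on $k$ (or a downward induction on $h$) applies. Step 2: Use the induction to conclude that a large sub-block of $A$ is an arithmetic progression, say $\{0, d, 2d, \ldots, (k-2)d\}$, and then pin down the last point $a_k$. Step 3: Show $d = 1$: if $d \geq 2$, exhibit extra elements of $h^{\wedge}_{\pm}A$ lying strictly between consecutive ``expected'' values, contradicting minimality — this is where the condition $k \geq 5$ and $h \geq 3$ (so there is enough room, $h \leq k-1$) gets used. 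Step 4: Determine $a_k$: knowing $\{0, 1, \ldots, k-2\} \subseteq A$, analyze which values of $a_k$ keep $|h^{\wedge}_{\pm}A| = 2hk - h(h+1)+1$; generically only $a_k = k-1$ works, but when $h = 4$, $k = 5$ a short direct computation shows $a_k = 6$ (giving $\{0,1,2,4,6\}$ after accounting for symmetry) is also admissible, which is exactly the correction to Conjecture~\ref{Conjecture 3}. Step 5: Handle the small/base cases $k = 5, 6$ and $h = 3, 4$ by hand or by the structure already forced, checking the exceptional family does not propagate to larger $k$ or $h$.

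The hard part will be Step 1 and Step 3 — making the reduction rigorous. The restricted signed sumset does not interact with sub-sums as cleanly as the ordinary sumset: deleting an element can change $|h^{\wedge}_{\pm}A|$ in a way that is not simply a fixed decrement, because the constraint $\sum |\lambda_i| = h$ with $\lambda_i \in \{-1,0,1\}$ couples the choices. So I expect the bulk of the work to be a delicate case analysis separating the contribution of representations that use the top index from those that do not, and controlling overlaps between the ``positive side'' and ``negative side'' of the sumset (they can only overlap near $0$, and since $0 \in A$ one has a natural symmetry $x \in h^{\wedge}_{\pm}A \iff -x \in h^{\wedge}_{\pm}A$, which should be exploited throughout). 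A secondary technical obstacle is ensuring the two exceptional sets for $(h,k) = (4,5)$ are the \emph{only} exceptions — this requires showing that for all other $(h,k)$ in range, any deviation of $a_k$ from $k-1$, or any internal gap, strictly increases the sumset size, which likely needs an explicit lower-bound refinement of \eqref{rssn-thm-1-eq1} under the extra hypothesis ``$A$ is not an A.P.''
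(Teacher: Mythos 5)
Your outline points in a reasonable general direction, but it has a genuine gap at exactly the place you flag as the hard part, and the mechanism that actually closes that gap in the paper is absent from your plan. You propose to peel off the top element $a_k$ and induct on $k$ (or downward on $h$). The difficulty you correctly identify --- that deleting an element does not change $|h^{\wedge}_{\pm}A|$ by a controlled amount --- is not resolved anywhere in your sketch, and a top-down induction of this kind is not what makes the argument work. The paper instead anchors the analysis at the \emph{bottom} $h+1$ elements $B = \{a_1, \ldots, a_{h+1}\}$ and sandwiches the signed sumset between two \emph{ordinary} restricted sumsets of $A' = A \setminus \{0\}$:
\[
(-h^{\wedge}A') \cup h^{\wedge}_{\pm}B \cup h^{\wedge}A' \subseteq h^{\wedge}_{\pm}A,
\]
with consecutive pieces overlapping in a single point each, namely $\mp(a_2 + \cdots + a_{h+1})$. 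Feeding in the known direct bounds (Theorem~\ref{restricted-hfold-direct-thm} for $h^{\wedge}A'$ and Theorem~\ref{rssn-thm-3} for $h^{\wedge}_{\pm}B$) together with the hypothesis $|h^{\wedge}_{\pm}A| = 2hk - h(h+1) + 1$ forces every inequality in the resulting chain to be an equality, and in particular $|h^{\wedge}_{\pm}B| = h^2 + h + 1$. This is the rigorous substitute for your Step 1: the ``signed'' difficulty is confined to a block of only $h+1$ elements, and everything outside that block is controlled by the classical unsigned theory.

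Two further ingredients are then needed, neither of which appears in your plan. First, one needs a complete classification of the $(h+1)$-element sets $B$ containing $0$ with $|h^{\wedge}_{\pm}B| = h^2 + h + 1$; this is the content of the long catalogue of auxiliary lemmas in Section~\ref{section-rssn-aux-lemma} (culminating in Lemma~\ref{rssn-lem17} and Lemma~\ref{rssn-lem23}), and it is where the exceptional set $\{0,1,2,4,6\}$ for $h = 4$ is detected. Your Step 4 gestures at this but offers no method beyond ``a short direct computation,'' which does not address proving that these are the \emph{only} minimal configurations. Second, once $B$ is known to be an arithmetic progression one must still propagate the structure to all of $A$; the paper does this by quoting Lemma~\ref{thm:10} from \cite{mmp2024} for $h \geq 5$, and by a separate argument (Lemma~\ref{rssn-lem19}, which invokes the inverse theorem for ordinary restricted sumsets, Theorem~\ref{restricted-hfold-inverse-thm}) for $h = 4$, $k \geq 6$. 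Your Steps 2--3 (pinning down the common difference and the last point) would in effect have to reprove these facts, and nothing in the proposal indicates how. So while the skeleton (normalize by gcd, reduce to a structured sub-block, treat $(h,k)=(4,5)$ separately) matches the paper's, the proposal as it stands is a statement of intent rather than a proof: the reduction to the $(h+1)$-element case, the classification of that case, and the extension lemma are all missing.
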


For a set $A \subseteq \Bbb Z$, let $A_{abs} = \{|a|: a \in A\}$. It is easy to verify that if $A$ is a nonempty finite set of integers such that either $A \cap (-A) = \emptyset$ or $A \cap (-A) = \{0\}$, then
\[h_{\pm}^{\wedge} A = h_{\pm}^{\wedge} A_{abs}.\]

This identity and above theorems immediately imply the following theorems.

\begin{theorem}\label{rssn-thm-1a}
		Let $h$ and $k$ be positive integers such that $3 \leq h \leq k - 1$ and $k \geq 5$. Let $A$ be a set of $k$ integers such that $A \cap (-A) = \{0\}$. Then
		\begin{equation*}\label{rssn-thm-1a-eq1}
			|h_{\pm}^\wedge A| \geq 2hk - h(h + 1) + 1.
		\end{equation*}
		This lower bound is best possible. 
	\end{theorem}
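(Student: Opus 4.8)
The plan is to deduce Theorem~\ref{rssn-thm-1a} directly from Theorem~\ref{rssn-thm-1} by transferring along the absolute-value identity recorded just above its statement. First I would note that the hypothesis $A\cap(-A)=\{0\}$ forces $0\in A$, and that the map $a\mapsto|a|$ is injective on $A$: if $|a|=|b|$ for distinct $a,b\in A$, then $a=-b$, so $a$ would be a nonzero element of $A\cap(-A)$, contradicting the hypothesis. Hence $A_{abs}=\{|a|:a\in A\}$ is a set of exactly $k$ nonnegative integers whose smallest element is $0$, and we may write $A_{abs}=\{a_1',a_2',\ldots,a_k'\}$ with $0=a_1'<a_2'<\cdots<a_k'$.

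Next I would invoke the identity $h_{\pm}^{\wedge}A=h_{\pm}^{\wedge}A_{abs}$, which holds because $A\cap(-A)=\{0\}$: replacing a generator $a_i$ of the signed sumset by $-a_i$ merely swaps the roles of the coefficients $\lambda_i$ and $-\lambda_i$, and the constraint $\lambda_i\in\{-1,0,1\}$ together with $\sum_i|\lambda_i|=h$ is symmetric under this swap, so negating the negative elements of $A$ leaves $h_{\pm}^{\wedge}A$ unchanged; the injectivity of $a\mapsto|a|$ on $A$ ensures that this negation carries $A$ onto $A_{abs}$ without merging any two elements. Therefore $|h_{\pm}^{\wedge}A|=|h_{\pm}^{\wedge}A_{abs}|$, and since $A_{abs}$ satisfies all the hypotheses of Theorem~\ref{rssn-thm-1} (indeed $3\le h\le k-1$, $k\ge 5$, and $0=a_1'<\cdots<a_k'$), that theorem gives $|h_{\pm}^{\wedge}A_{abs}|\ge 2hk-h(h+1)+1$, which is exactly the asserted bound.

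Finally, for the claim that the bound is best possible, I would exhibit an explicit extremal family: for every positive integer $d$ the set $A=d\ast[0,k-1]$ consists of nonnegative integers, so $A\cap(-A)=\{0\}$, and the ``best possible'' part of Theorem~\ref{rssn-thm-1} shows $|h_{\pm}^{\wedge}A|=2hk-h(h+1)+1$ for such $A$. I do not expect a genuine obstacle: the whole argument is a routine transfer through the absolute-value identity, and the only two points that need care are the injectivity of $a\mapsto|a|$ on $A$ (so that $|A_{abs}|=k$ and no information is lost) and the observation that the extremal sets for Theorem~\ref{rssn-thm-1} already lie in the class $A\cap(-A)=\{0\}$, so the lower bound cannot be improved.
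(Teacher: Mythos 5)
Your proposal is correct and follows exactly the paper's route: the paper derives Theorem~\ref{rssn-thm-1a} from Theorem~\ref{rssn-thm-1} via the identity $h_{\pm}^{\wedge}A = h_{\pm}^{\wedge}A_{abs}$ for sets with $A \cap (-A) = \{0\}$, with the extremal example $a_2 \ast [0,k-1]$ giving sharpness. You have merely supplied the (correct) routine details — injectivity of $a \mapsto |a|$ on $A$ and the symmetry of the coefficient set $\{-1,0,1\}$ — that the paper leaves as "easy to verify."
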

	\begin{theorem}\label{rssn-thm-2a}
		Let $h$ and $k$ be positive integers such that $3 \leq h \leq k - 1$ and $k \geq 5$. Let $A$ be a set of $k$ integers such that $A \cap (- A) = \{0\}$. If
		\[|h_{\pm}^\wedge A| = 2hk - h(h + 1) + 1,\]
		then 
		\begin{equation*}
			A_{abs} =
			\begin{cases}
				d \ast [0, 4] ~\text{or}~ d \ast \{0, 1, 2, 4, 6\}, &\text{if $h = 4$, $k = 5$};\\
				d \ast [0, k - 1], &\text{otherwise,}  
			\end{cases}
		\end{equation*}
where $d$ is the smallest nonzero element of $A_{abs}$
	\end{theorem}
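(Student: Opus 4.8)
\textbf{Proof proposal for Theorem~\ref{rssn-thm-2a}.}
The plan is to deduce Theorem~\ref{rssn-thm-2a} from Theorem~\ref{rssn-thm-2} by passing to the set of absolute values. The engine is the identity $h_{\pm}^{\wedge}A = h_{\pm}^{\wedge}A_{abs}$ recorded in the excerpt, which holds precisely because $A\cap(-A)=\{0\}$: in a defining sum $\sum_{i}\lambda_i a_i$ with $\lambda_i\in\{-1,0,1\}$ one may replace each $a_i$ by $|a_i|$ after absorbing the sign of $a_i$ into $\lambda_i$, so the two sumsets coincide as subsets of $\mathbb{Z}$.

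First I would check that $A_{abs}$ inherits the hypotheses of Theorem~\ref{rssn-thm-2}. Since $A\cap(-A)=\{0\}$ is nonempty, $0\in A$, hence $0\in A_{abs}$, and $A_{abs}$ is a set of nonnegative integers. If two distinct elements $a,b\in A$ satisfied $|a|=|b|$, then $b=-a$ with $a\neq 0$, contradicting $A\cap(-A)=\{0\}$; therefore $a\mapsto|a|$ is injective on $A$ and $|A_{abs}|=k$. Write $A_{abs}=\{0=c_1<c_2<\cdots<c_k\}$, so $c_2$ is the smallest nonzero element of $A_{abs}$, i.e.\ $c_2=d$. Because $3\le h\le k-1$ and $k\ge 5$, all the numerical hypotheses of Theorem~\ref{rssn-thm-2} are met by $A_{abs}$.

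Now from the hypothesis $|h_{\pm}^{\wedge}A| = 2hk - h(h+1) + 1$ and the identity above we get $|h_{\pm}^{\wedge}A_{abs}| = 2hk - h(h+1) + 1$, so Theorem~\ref{rssn-thm-2} applies to $A_{abs}$ and yields $A_{abs}=c_2\ast[0,4]$ or $c_2\ast\{0,1,2,4,6\}$ when $h=4,\ k=5$, and $A_{abs}=c_2\ast[0,k-1]$ otherwise. Substituting $c_2=d$ gives exactly the asserted description of $A_{abs}$. Since the argument is a clean reduction, there is essentially no obstacle beyond verifying that the absolute-value map preserves cardinality and the normalization $0\in A$, which is exactly where the hypothesis $A\cap(-A)=\{0\}$ (rather than the weaker $A\cap(-A)=\emptyset$) is used; in the empty-intersection case $A_{abs}$ need not contain $0$. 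The same reduction proves Theorem~\ref{rssn-thm-1a} from Theorem~\ref{rssn-thm-1}: one has $|h_{\pm}^{\wedge}A|=|h_{\pm}^{\wedge}A_{abs}|\ge 2hk-h(h+1)+1$, and optimality is witnessed by $A=[0,k-1]$, for which $A\cap(-A)=\{0\}$.
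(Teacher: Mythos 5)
Your reduction is exactly the paper's argument: the paper derives Theorem~\ref{rssn-thm-2a} directly from Theorem~\ref{rssn-thm-2} via the identity $h_{\pm}^{\wedge}A = h_{\pm}^{\wedge}A_{abs}$, stating only that the theorems ``immediately imply'' it. Your write-up is correct and in fact supplies the details the paper leaves implicit (injectivity of $a\mapsto|a|$ under $A\cap(-A)=\{0\}$, hence $|A_{abs}|=k$, and $0\in A_{abs}$), so nothing further is needed.
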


Mohan, Mistri and Pandey proved the following lemma for the restricted signed sumset.
\begin{lemmaa}[{\cite[Lemma $2$]{mmp2024}}]\label{rssn-basic-lem1}
Let $h$ and $k$ be integers such that $3 \leq h \leq k - 1$ and $k \geq 5$. Let $A = \{a_1, a_2, \ldots, a_k\}$ be a set of integers such that $0 = a_1 < a_2 < \cdots < a_k$. Let $B = \{a_1, a_2 \ldots, a_{h + 1}\} \subseteq A$. If $|h_{\pm}^\wedge B| \geq h^2 + h + 1 + t$, where $t \geq 0$, then 
\[|h_{\pm}^\wedge A| \geq 2hk - h^2 - h + 1 + t.\]
\end{lemmaa}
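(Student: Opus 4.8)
The plan is to locate three pairwise disjoint subsets of $h^{\wedge}_{\pm}A$ whose sizes sum to the claimed bound. Put $M = a_2 + a_3 + \cdots + a_{h+1}$, the sum of the $h$ smallest positive elements of $A$. First I would record two elementary inclusions. Since any representation $\sum_{i=1}^{h+1}\lambda_i a_i$ witnessing an element of $h^{\wedge}_{\pm}B$ extends to a valid representation over $A$ by setting $\lambda_i = 0$ for $i > h+1$, we have $h^{\wedge}_{\pm}B \subseteq h^{\wedge}_{\pm}A$; moreover, because $a_1 = 0$, the largest element of $h^{\wedge}_{\pm}B$ is exactly $M$ (the zero coefficient in an extremal representation is forced onto $a_1$), and by the symmetry $-h^{\wedge}_{\pm}B = h^{\wedge}_{\pm}B$ we get $h^{\wedge}_{\pm}B \subseteq h^{\wedge}_{\pm}A \cap [-M, M]$. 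Likewise, the restricted (unsigned) sumset $h^{\wedge}\{a_2, \ldots, a_k\}$, formed from the $k-1$ positive elements of $A$, embeds into $h^{\wedge}_{\pm}A$ by assigning coefficient $+1$ to the $h$ chosen indices and $0$ to all others; its minimum element is precisely $M$ and every other element exceeds $M$.

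Next I would apply Theorem~\ref{restricted-hfold-direct-thm} to the $(k-1)$-element set $\{a_2, \ldots, a_k\}$ (legitimate since $3 \leq h \leq k-1$), which gives $|h^{\wedge}\{a_2, \ldots, a_k\}| \geq h(k-1) - h^2 + 1 = h(k-h-1) + 1$. Deleting the minimal element $M$ leaves at least $h(k-h-1)$ elements of $h^{\wedge}_{\pm}A$ lying in $(M, \infty)$, and negating each of these produces $h(k-h-1)$ elements of $h^{\wedge}_{\pm}A$ lying in $(-\infty, -M)$. These two sets, together with the copy of $h^{\wedge}_{\pm}B$ sitting inside $[-M, M]$, are pairwise disjoint by the magnitude constraints, so
\[
|h^{\wedge}_{\pm}A| \;\geq\; |h^{\wedge}_{\pm}B| + 2h(k-h-1) \;\geq\; (h^2 + h + 1 + t) + 2h(k-h-1) \;=\; 2hk - h^2 - h + 1 + t,
\]
which is the assertion.

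The argument is almost entirely bookkeeping, and the single point requiring care is the claim that $M$ is simultaneously the maximum of $h^{\wedge}_{\pm}B$ and the minimum of $h^{\wedge}\{a_2, \ldots, a_k\}$, so that the three blocks genuinely inhabit $(-\infty,-M)$, $[-M,M]$, and $(M,\infty)$ with the overlap at $M$ removed exactly once; this is where the hypothesis $0 = a_1$ is essential. I would also verify the degenerate case $h = k-1$, where $B = A$, $h(k-h-1) = 0$, and the statement collapses to a tautology, as a consistency check on the counting. The only nontrivial external input is Theorem~\ref{restricted-hfold-direct-thm} (the Dias da Silva--Hamidoune / Nathanson bound for $h^{\wedge}$ over $\mathbb{Z}$); everything else is a direct containment-and-disjointness computation, so I do not anticipate a real obstacle. (If one preferred to avoid invoking Theorem~\ref{restricted-hfold-direct-thm}, the same $h(k-h-1)$ elements above $M$ can be produced by hand as a strictly increasing chain of $h$-subset sums of $\{a_2,\ldots,a_k\}$, promoting one index at a time from the configuration $\{a_2,\ldots,a_{h+1}\}$ up toward $\{a_{k-h+1},\ldots,a_k\}$.)
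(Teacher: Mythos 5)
Your argument is correct: the decomposition of $h^{\wedge}_{\pm}A$ into $(-h^{\wedge}A')\cup h^{\wedge}_{\pm}B\cup h^{\wedge}A'$ with $A'=\{a_2,\ldots,a_k\}$, the observation that $h^{\wedge}_{\pm}B$ lives in $[-M,M]$ while $h^{\wedge}A'\setminus\{M\}$ lives above $M$, and the application of Theorem~\ref{restricted-hfold-direct-thm} to $A'$ give exactly the claimed count. The paper imports this lemma from \cite{mmp2024} without reproving it, but this is precisely the decomposition (with the overlap $\{\pm M\}$ handled by inclusion–exclusion rather than by deleting the minimum) that the paper itself deploys in the proof of Theorem~\ref{rssn-thm-2}, so your route is essentially the standard one.
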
 

In view of the above lemma, to prove Theorem \ref{rssn-thm-1}, it suffices to prove the following theorem.
\begin{theorem}\label{rssn-thm-3}
Let $h$ be an integers such that $h \geq 3$. Let $A = \{a_1, a_2, \ldots, a_{h + 1}\}$ be a set of integers such that $0 = a_1 < a_2 < \cdots < a_{h + 1}$. Then
\begin{equation}\label{rssn-thm-3-eq1}
  |h_{\pm}^\wedge A| \geq h^2 + h + 1.
\end{equation}
The lower bound in $\eqref{rssn-thm-3-eq1}$ is best possible.
\end{theorem}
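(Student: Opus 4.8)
The plan is to prove Theorem~\ref{rssn-thm-3} by induction on $h$. Write $b_i := a_{i+1}$ for $1 \le i \le h$, so that $B := \{b_1 < b_2 < \dots < b_h\}$ is the set of nonzero elements of $A$ and $A = \{0\}\cup B$. For a finite set $X$ of integers let $\Sigma_\pm(X) := \{\sum_{x\in X}\varepsilon_x x : \varepsilon_x\in\{-1,1\}\}$ denote its full signed sumset. Three elementary facts are used freely: $\Sigma_\pm(X)$ is symmetric about $0$; if $X$ consists of positive integers then $\max\Sigma_\pm(X) = \sigma(X) := \sum_{x\in X}x$; and $|\Sigma_\pm(X)|$ equals the number of distinct subset sums of $X$, which for such $X$ is at least $\binom{|X|+1}{2}+1$ by the standard peeling argument (the one underlying Theorem~\ref{restricted-hfold-direct-thm}). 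Since exactly one coefficient vanishes in any member of $h^\wedge_\pm A$ and the coefficient of $a_1=0$ does not affect the value, we obtain
\[
h^\wedge_\pm A \;=\; \Sigma_\pm(B)\ \cup\ \bigcup_{m=1}^{h}\Sigma_\pm(B\setminus\{b_m\}).
\]
By the dilation identity $h^\wedge_\pm(c\ast A) = c\ast(h^\wedge_\pm A)$ we may assume $\gcd(b_1,\dots,b_h)=1$; in particular the $b_i$ are not all even.

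For the inductive step assume $h\ge 4$ and that the theorem holds for $h-1$. Put $A' := \{0,b_1,\dots,b_{h-1}\}$, $D := (h-1)^\wedge_\pm A'$ and $E := \Sigma_\pm(\{b_1,\dots,b_{h-1}\})$. Splitting an element of $h^\wedge_\pm A$ according to whether the coefficient of $b_h$ is $0$ or $\pm1$ gives
\[
h^\wedge_\pm A \;=\; E\ \cup\ (D+b_h)\ \cup\ (D-b_h),
\]
where $E\subseteq D$ (make the coefficient of $0$ vanish), $D$ is symmetric about $0$, and $\max D = \max E = \sigma(B)-b_h =: M'$. The induction hypothesis gives $|D|\ge (h-1)^2+(h-1)+1 = h^2-h+1$, while $|E|\ge\binom{h}{2}+1$. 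If $b_h > M'$, the translates $D+b_h$ and $D-b_h$ consist of positive and of negative integers respectively, hence are disjoint, so $|h^\wedge_\pm A|\ge 2|D|\ge 2(h^2-h+1)\ge h^2+h+1$, the last step being $h^2-3h+1\ge 0$ for $h\ge 3$. This settles the case of a ``large'' top element.

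The remaining case $b_h\le M'$ is the crux. Now $D\pm b_h$ overlap, and I would partition $h^\wedge_\pm A$ into the zones $(M',\infty)$, $[-M',M']$, $(-\infty,-M')$. Only $D+b_h$ meets $(M',\infty)$, contributing exactly $\beta := |D\cap(M'-b_h,M']|$ elements there, and by symmetry $(-\infty,-M')$ contributes $\beta$ as well. Since $b_{h-1}<b_h$, the $h$ distinct numbers $M',\,M'-b_1,\dots,M'-b_{h-1}$ all lie in $(M'-b_h,M']\cap D$, so $\beta\ge h$. For the middle zone one has
$h^\wedge_\pm A\cap[-M',M']\supseteq E\cup\bigl((D+b_h)\cap[-M',M']\bigr)\cup\bigl((D-b_h)\cap[-M',M']\bigr)$,
and the main obstacle is to show this union has at least $h^2-h+1$ elements. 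The balancing mechanism is that for small $b_h$ the two translates $D\pm b_h$ essentially cover $[-M',M']$ (and $|D|$ may be nearly $2|E|$), whereas for $b_h$ near $M'$ the quantity $\beta$ is large; making this precise should require a finer split on the size of $b_h$ relative to $b_1,\dots,b_{h-1}$ together with a parity analysis governed by whether $2\mid\gcd(b_1,\dots,b_{h-1})$, which is presumably also the origin of the exceptional set $\{0,1,2,4,6\}$ in Theorem~\ref{rssn-thm-2}.

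The base case $h=3$ I would handle directly: for $A=\{0,b_1,b_2,b_3\}$ the four sets $\Sigma_\pm(\{b_1,b_2,b_3\})$, $\Sigma_\pm(\{b_2,b_3\})$, $\Sigma_\pm(\{b_1,b_3\})$, $\Sigma_\pm(\{b_1,b_2\})$ together contain the values $\pm(b_1+b_2+b_3)$, $\pm(b_2+b_3-b_1)$, $\pm(b_2+b_3)$, $\pm(b_1+b_3)$, $\pm(b_3-b_2)$, $\pm(b_1+b_2)$, $\pm(b_3-b_1)$, $\pm(b_2-b_1)$, and a short case analysis of their possible orderings (with $\gcd(b_1,b_2,b_3)=1$ ruling out the coincidences that only proportionality relations could force) leaves at least $13$ of them distinct. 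Finally, the bound is best possible: for $A=[0,h]$ one has $h^\wedge_\pm A\subseteq\bigl[-\binom{h+1}{2},\binom{h+1}{2}\bigr]$, an interval of exactly $h(h+1)+1=h^2+h+1$ integers, so the lower bound just proved is attained there.
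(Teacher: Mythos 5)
Your reduction $h^\wedge_\pm A = E \cup (D+b_h)\cup(D-b_h)$ with $D=(h-1)^\wedge_\pm\{0,b_1,\dots,b_{h-1}\}$ is correct, and the case $b_h>M'$ is genuinely disposed of by the disjointness of the two translates. But the proof has a real gap exactly where you flag ``the crux'': when $b_h\le M'$ you establish only $2\beta\ge 2h$ elements outside $[-M',M']$ and $|E|\ge\binom{h}{2}+1$ elements inside, which totals roughly $\tfrac{1}{2}h^2+\tfrac{3}{2}h+1$ --- about $\tfrac{1}{2}h(h-1)$ short of the target $h^2+h+1$. The missing half must come from elements of $(D\pm b_h)\cap[-M',M']$ not already in $E$, and producing them is not a routine refinement: this is the case containing the extremal set $[0,h]$ (where the bound is tight) and the near-extremal configurations, so the count has no slack and every coincidence between $E$, $D+b_h$ and $D-b_h$ matters. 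The paper spends essentially all of its effort here, via a parity-driven case analysis on $a_2,a_3,a_4$ (Lemmas \ref{rssn-lem9}--\ref{rssn-lem17}) and explicit pairwise-disjoint families $B_{j,1},B_{j,2},C_1^{(t)},C_2^{(t)}$ whose sizes $\alpha_t$ are bounded below according to the gap pattern $a_{i+1}-a_i\in\{a_2,a_3,a_4\}$ or not; nothing in your outline substitutes for that counting. Your sentence that the argument ``should require a finer split on the size of $b_h$ \dots together with a parity analysis'' is an accurate description of what remains to be done, not a proof of it.

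A secondary issue: the base case $h=3$ as sketched does not close. For $A=\{0,1,2,3\}$ the sixteen listed values $\pm(b_1+b_2+b_3),\pm(b_2+b_3-b_1),\pm(b_2+b_3),\pm(b_1+b_3),\pm(b_3-b_2),\pm(b_1+b_2),\pm(b_3-b_1),\pm(b_2-b_1)$ collapse to only the twelve integers $\pm1,\dots,\pm6$; the thirteenth element of $3^\wedge_\pm A$ is $0=b_1+b_2-b_3$, which is not on your list, so ``at least $13$ of them distinct'' fails for the values as given (and $\gcd=1$ does not rule out coincidences such as $b_3-b_2=b_2-b_1$, which need only an arithmetic-progression relation, not proportionality). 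The closing argument that $A=[0,h]$ attains the bound is fine, and matches the paper's Lemma \ref{rssn-lem18}.
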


In Section \ref{section-rssn-aux-lemma}, we prove some auxiliary lemmas which will be used to prove Theorem \ref{rssn-thm-3} (hence Theorem \ref{rssn-thm-1}) and Theorem \ref{rssn-thm-2} in Section \ref{section-rrsn-thm-proof}. 

\section{Auxiliary lemmas} \label{section-rssn-aux-lemma}
We follow the following notations from \cite{mistri-prajapati2025a}. For a nonempty finite set $A$ of integers, let $\min(A)$, $\max(A)$, $\min_{+}(A)$, $\max_{-}(A)$ denote the smallest, the largest, the second smallest, and the second largest elements of $A$, respectively.  A set $S$ is said to be {\it symmetric} if $x \in S$ implies $-x \in S$. For a subset $A$ of an additive abelian group $G$, if $c \in G$, then we write $c + A$ for $\{c\} + A$. The {\em set of subsums of $A \subseteq G$}, denoted by $\Sigma (A)$ is defined as follows.  
\[\Sigma (A) = \bigg\{\sum_{b \in B}b : B \subseteq A\bigg\}.\]

We also recall the following facts from \cite{mistri-prajapati2025a} which will be used frequently in the proofs of lemmas.
\begin{enumerate}
	\item Let $h \geq 3$ be an integer. Let $A = \{a_1,\ldots, a_{h + 1}\}$ be a set of integers such that $a_1 < \cdots < a_{h + 1}$, and
		\[a_i \not \equiv a_j\pmod {2}\]
		for some  $i, j \in [1, h + 1]$, where $i \neq j$. Let $A_i = A \setminus \{a_i\}$, and let $A_j = A \setminus \{a_j\}$. Then it is easy to see that the sumsets $h_{\pm}^\wedge A_i$ and $h_{\pm}^\wedge A_j$ are disjoint.

	\item Let $h \geq 2$ be an integer. Let $A = \{a_1,\ldots, a_h\}$ be a set of integers such that $a_1 < \cdots < a_h$. Then it is easy to show that
		\[h_{\pm}^\wedge A = \min (h_{\pm}^\wedge A) + 2 \ast \Sigma (A) = \max (h_{\pm}^\wedge A) - 2 \ast \Sigma (A).\]
\end{enumerate}
	
We need the following lemmas of \cite{mistri-prajapati2025a} to prove Theorem \ref{rssn-thm-2} and Theorem \ref{rssn-thm-3}.

	\begin{lemma}[{\cite[Lemma $2.1$]{mistri-prajapati2025a}}]\label{rssp-lem1}
		Let $h \geq 3$ be an integer. Let $A = \{a_1,\ldots, a_{h + 1}\}$ be a set of positive integers such that $a_1 < \cdots < a_{h + 1}$. Furthermore, assume that
		\[a_1 \equiv a_2\pmod {2} ~\text{and}~ a_r \not \equiv a_1 \pmod {2}\]
		for some  $r \in [3, h + 1]$. Then
		\[|h_{\pm}^\wedge A| \geq |h_{\pm}^\wedge A_r| + \frac{h (h + 1)}{2} + 2h + 1,\]
		where $A_r = A \setminus \{a_r\}$. Hence 
		\[|h_{\pm}^\wedge A| \geq h^2 + 3h + 2.\]	
	\end{lemma}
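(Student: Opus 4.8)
The plan is to prove the recursive inequality first and then read off the stated bound. Write $A_i = A\setminus\{a_i\}$ for each $i$. Since $|A| = h+1$ and any representation counted by $h_{\pm}^{\wedge}A$ has exactly one zero coefficient, we have $h_{\pm}^{\wedge}A = \bigcup_{i=1}^{h+1} h_{\pm}^{\wedge}A_i$, each $A_i$ being an $h$-element set whose signed sumset uses every coefficient. Because $a_1\equiv a_2\pmod 2$ while $a_r\not\equiv a_1\pmod 2$, the recalled Fact (1) shows that $h_{\pm}^{\wedge}A_r$ is disjoint from both $h_{\pm}^{\wedge}A_1$ and $h_{\pm}^{\wedge}A_2$ (they lie in the opposite parity class), so
\[|h_{\pm}^{\wedge}A| \ge |h_{\pm}^{\wedge}A_r| + |h_{\pm}^{\wedge}A_1 \cup h_{\pm}^{\wedge}A_2|.\]
Thus the recursive inequality reduces to the single claim $|h_{\pm}^{\wedge}A_1\cup h_{\pm}^{\wedge}A_2|\ge \frac{h(h+1)}{2}+2h+1$, and this is the heart of the matter.

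To attack the claim I would convert signed sums into subset sums by Fact (2). Applying it to the $h$-element sets $A_1$ and $A_2$ and writing $M=\max(h_{\pm}^{\wedge}A_1)$, one gets $h_{\pm}^{\wedge}A_1 = M-2\ast\Sigma(A_1)$, and since $\max(h_{\pm}^{\wedge}A_2)=M-(a_2-a_1)$ and $\delta:=(a_2-a_1)/2$ is a positive integer (this is exactly where $a_1\equiv a_2\pmod 2$ is used), $h_{\pm}^{\wedge}A_2 = M-2\ast(\Sigma(A_2)+\delta)$. Hence $|h_{\pm}^{\wedge}A_1\cup h_{\pm}^{\wedge}A_2| = |\Sigma(A_1)\cup(\Sigma(A_2)+\delta)|$. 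Splitting each subset-sum set according to whether the distinguished element ($a_2$ for $A_1$, $a_1$ for $A_2$) is used, and writing $C=\{a_3,\dots,a_{h+1}\}$, a short computation using $a_2=a_1+2\delta$ collapses this union to a sumset
\[\Sigma(A_1)\cup(\Sigma(A_2)+\delta) = S+\Sigma(C),\qquad S=\{0,\ \delta,\ a_1+\delta,\ a_1+2\delta\},\]
with $|S|=4$ (the four shifts are distinct since $a_1,\delta>0$). So the claim becomes the purely combinatorial inequality $|S+\Sigma(C)|\ge \frac{h(h+1)}{2}+2h+1$.

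I would prove this last inequality by induction on $|C|$, peeling off the largest element. If $C'=C\setminus\{c^\ast\}$ with $c^\ast=\max C$, then $\Sigma(C)=\Sigma(C')\cup(c^\ast+\Sigma(C'))$, so $S+\Sigma(C)=G\cup(c^\ast+G)$ with $G=S+\Sigma(C')$ and $\max G=\max S+\sigma_{C'}$. The idea is to exhibit inside $c^\ast+G$ many elements lying strictly above $\max G$, hence new. Two families suffice: the four translates $T=S+\{\sigma_C\}$ of the full sum $\sigma_C$, and the set $D=\{(\max S)+c^\ast+(\sigma_{C'}-x):x\in C'\}$ obtained by dropping one element of $C'$ under the top shift. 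Every element of $T\cup D$ exceeds $\max G$ because $c^\ast>\max S=a_2$ and $c^\ast>x$ for all $x\in C'$; and $T,D$ are disjoint because $\max D=c^\ast+\sigma_{C'}+(a_2-a_3)<c^\ast+\sigma_{C'}=\min T$, the inequality $a_3>a_2$ being exactly the input that makes this work. Hence passing from $C'$ to $C$ adds at least $|T|+|D|=4+(|C|-1)=|C|+3$ new elements. Starting from $|S+\Sigma(\emptyset)|=|S|=4$ and summing the increments over $|C|=1,\dots,h-1$ gives $4+\sum_{j=1}^{h-1}(j+3)=\frac{h(h+1)}{2}+2h+1$, as required.

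Finally, the ``Hence'' statement follows at once: $A_r$ is a set of $h$ positive integers, so $|h_{\pm}^{\wedge}A_r|=|\Sigma(A_r)|$ by Fact (2), and the classical subset-sum bound $|\Sigma(A_r)|\ge \frac{h(h+1)}{2}+1$ (itself provable by the same ``add the largest element, gain $h$ new top sums'' induction) combined with the recursive inequality yields $|h_{\pm}^{\wedge}A|\ge h^2+3h+2$. The main obstacle is the crux inequality of the middle two paragraphs; once the problem is recast as $|S+\Sigma(C)|$ with the interleaving four-term shift $S$ produced by the half-gap $\delta$, the delicate point is organizing the new elements so that the two families $T$ and $D$ are provably disjoint and provably new, which is precisely where the ordering $a_2<a_3$ and the evenness of $a_2-a_1$ enter.
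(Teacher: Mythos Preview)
Your argument is correct. The reduction to $|h_{\pm}^{\wedge}A_1\cup h_{\pm}^{\wedge}A_2|$ via the parity disjointness with $h_{\pm}^{\wedge}A_r$ is sound, and the key identity $\Sigma(A_1)\cup(\Sigma(A_2)+\delta)=S+\Sigma(C)$ with $S=\{0,\delta,a_1+\delta,a_1+2\delta\}$ is a nice algebraic collapse that exploits $a_2-a_1=2\delta$ cleanly. The peel-off-the-maximum induction on $|C|$ is also fine: at each step the sets $T$ and $D$ consist of elements strictly above $\max G$ because $c^\ast>\max S=a_2$ and $c^\ast>x$ for $x\in C'$, and $\max D<\min T$ uses precisely $a_2<a_3=\min C'$. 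The arithmetic $4+\sum_{j=1}^{h-1}(j+3)=\tfrac{h(h+1)}{2}+2h+1$ checks out, and the final deduction from the known bound $|h_{\pm}^{\wedge}A_r|\ge\tfrac{h(h+1)}{2}+1$ (Theorem~\ref{thm:3} with $k=h$) is immediate.

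Note that the present paper does not itself prove this lemma; it is quoted from the companion preprint \cite{mistri-prajapati2025a}, so no line-by-line comparison with an in-paper proof is possible. Your route via the sumset identity $S+\Sigma(C)$ is more structural than the explicit listing of chains $B_j,C_j$ that the paper uses in the analogous nonnegative-case lemmas (e.g.\ Lemmas~\ref{rssn-lem11} and~\ref{rssn-lem12}); it trades concrete element-by-element bookkeeping for a cleaner recursive count, at the cost of one extra translation step to set up the four-term shift set $S$.
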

	
	\begin{lemma}[{\cite[Lemma $2.2$]{mistri-prajapati2025a}}]\label{rssp-lem2}
		Let $h \geq 3$ be an integer. Let $A = \{a_1, a_2,\ldots, a_{h}\}$ be a set of odd positive integers such that $a_1 < \cdots < a_{h}$. Then 
		\begin{equation}\label{rssp-lemeq:13}
			|h_{\pm}^\wedge A|\geq h^2 - 1.
		\end{equation}	
		The lower bound in \eqref{rssp-lemeq:13} is best possible.
	\end{lemma}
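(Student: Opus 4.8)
The plan is to recast the statement in terms of subset sums and then argue by induction on $h$.

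First I would reduce to subset sums. Since $|A|=h$, the constraint $\sum_{i=1}^{h}|\lambda_i|=h$ forces $\lambda_i\in\{-1,1\}$ for every $i$, so
\[
h_{\pm}^\wedge A=\Bigl\{\sum_{i=1}^{h}\varepsilon_i a_i : \varepsilon_i\in\{-1,1\}\Bigr\}.
\]
Writing $\sigma=a_1+\cdots+a_h$ and using $\sum_i\varepsilon_i a_i=2\sum_{i:\varepsilon_i=1}a_i-\sigma$, the second of the two facts recalled above gives $h_{\pm}^\wedge A=-\sigma+2\ast\Sigma(A)$, hence $|h_{\pm}^\wedge A|=|\Sigma(A)|$. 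Thus the lemma is equivalent to the assertion that a set of $h$ distinct odd positive integers has at least $h^2-1$ distinct subset sums; the bound is attained by $d\ast\{1,3,\ldots,2h-1\}$, for which (splitting subsets according to their cardinality) one checks $\Sigma(\{1,3,\ldots,2h-1\})=[0,h^2]\setminus\{2,h^2-2\}$, a set of size $h^2-1$.

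Next I would induct on $h$. For the base case $h=3$, with $A=\{a_1<a_2<a_3\}$ the eight subset sums $0,\ a_1,\ a_2,\ a_3,\ a_1+a_2,\ a_1+a_3,\ a_2+a_3,\ a_1+a_2+a_3$ are pairwise distinct: among the six middle ones the only possible coincidence is $a_3=a_1+a_2$, which is impossible because $a_1+a_2$ is even while $a_3$ is odd, so $|\Sigma(A)|=8=3^2-1$. For the inductive step, let $h\ge4$, put $A'=A\setminus\{a_h\}$ and $\sigma'=\sum A'$; then $A'$ is a set of $h-1$ distinct odd positive integers, so $|\Sigma(A')|\ge(h-1)^2-1$ by the inductive hypothesis. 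Since $\Sigma(A)=\Sigma(A')\cup(a_h+\Sigma(A'))$,
\[
|\Sigma(A)|=2\,|\Sigma(A')|-c,\qquad c:=\bigl|\{\,s\in\Sigma(A') : s+a_h\in\Sigma(A')\,\}\bigr|,
\]
so it suffices to prove $c\le|\Sigma(A')|-(2h-1)$, which gives $|\Sigma(A)|\ge|\Sigma(A')|+2h-1\ge h^2-1$. Equivalently I must exhibit $2h-1$ subset sums $u\in\Sigma(A')$ with $u+a_h\notin\Sigma(A')$; the $h$ ``large'' sums $\sigma',\ \sigma'-a_1,\ \ldots,\ \sigma'-a_{h-1}$ do the job at once, since adding $a_h>a_{h-1}$ to any of them pushes it past $\max\Sigma(A')=\sigma'$.

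The hard part will be finding the remaining $h-1$ such $u$ among the smaller subset sums, and this is where I expect the technical heart of the proof to lie. If $a_h\ge\sigma'$ there is nothing to do: then $c=0$ and $|\Sigma(A)|=2|\Sigma(A')|\ge2\bigl((h-1)^2-1\bigr)\ge h^2-1$ for $h\ge4$. If $a_h<\sigma'$ I would exploit the fine structure of $\Sigma(A')$ forced by oddness --- for instance $2\notin\Sigma(A')$, the smallest positive even element of $\Sigma(A')$ is $a_1+a_2\ge4$, and symmetrically $\sigma'-2\notin\Sigma(A')$ --- to locate subset sums $u$ for which $u+a_h$ falls in a gap of $\Sigma(A')$; the bookkeeping then splits into cases according to how small $a_h$ is, the tightest being when $a_h$ is so small that $a_1,\ldots,a_{h-1}$ are forced to lie close to $1,3,\ldots,2h-3$, a situation that can be resolved by explicit computation. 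Controlling the overlap count $c$ in this small-$a_h$ regime, where $\Sigma(A')$ is nearly an interval and coincidences are most numerous, is the main obstacle.
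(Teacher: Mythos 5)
This paper does not actually contain a proof of Lemma \ref{rssp-lem2}: the lemma is imported verbatim from Part I \cite{mistri-prajapati2025a}, so there is no in-paper argument to compare yours against, and your proposal has to stand on its own. Its opening reductions are correct and are exactly the paper's recalled Fact (2): since $|A|=h$ forces every $\lambda_i\in\{-1,1\}$, one has $h_{\pm}^\wedge A=-\sigma+2\ast\Sigma(A)$ and hence $|h_{\pm}^\wedge A|=|\Sigma(A)|$; your identification of the extremal set ($\Sigma(\{1,3,\ldots,2h-1\})=[0,h^2]\setminus\{2,h^2-2\}$) and your base case $h=3$ are also fine.

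The genuine gap is the inductive step, which as written is a plan rather than a proof. You need $2h-1$ elements $u\in\Sigma(A')$ with $u+a_h\notin\Sigma(A')$, you exhibit only the $h$ sums $\sigma',\,\sigma'-a_1,\ldots,\sigma'-a_{h-1}$, and you explicitly defer the remaining $h-1$ to unspecified bookkeeping; but that deferred step is the entire content of the lemma. There is moreover no slack to exploit: for $A'=\{1,3,\ldots,2h-3\}$ and $a_h=2h-1$ one computes that \emph{exactly} $2h-1$ elements $u\in\Sigma(A')$ satisfy $u+a_h\notin\Sigma(A')$, so the inequality you need is tight and cannot be obtained by a soft counting argument --- the case analysis in the regime where $a_h$ is small compared with $\sigma'$ (where $\Sigma(A')$ is nearly an interval and the overlap $c$ is largest) must actually be carried out. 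Note also that your sufficient condition $c\le|\Sigma(A')|-(2h-1)$ is stronger than what the induction needs: when $|\Sigma(A')|>(h-1)^2-1$ a smaller escape count already suffices, so a correct completion would probably have to trade the size of $\Sigma(A')$ against the escape count rather than prove a uniform increment of $2h-1$. (A minor slip: if $a_h=\sigma'$ then $c=1$ rather than $0$, since $0+a_h=\sigma'\in\Sigma(A')$; this is harmless because $2((h-1)^2-1)-1\ge h^2-1$ for $h\ge4$.) As it stands the proposal is an outline whose hardest step is missing.
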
	
	
	\begin{lemma}[{\cite[Lemma $2.5$]{mistri-prajapati2025a}}]\label{rssp-lem4}
		Let $h \geq 3$ be an integer. Let $A = \{a_1, \ldots, a_{h + 1}\}$ be a set of positive integers such that $a_1 < \cdots < a_{h + 1}$. Furthermore, assume that 
		\[a_1 \not \equiv a_2 \pmod {2} ~\text{and}~ a_3 \not \equiv a_1 \pmod {2}.\]
		Then
		\begin{equation}
			|h_{\pm}^\wedge A| \geq 
			\begin{cases}
				|h_{\pm}^\wedge A_1| + \frac{h(h + 1)}{2} + 2h - 1, &\text{if $a_3 = 2a_1 + a_2$};\\
				|h_{\pm}^\wedge A_1| + \frac{h(h + 1)}{2} + 3h - 2, &\text{if $a_3 \neq 2a_1 + a_2$.}   
			\end{cases}
		\end{equation}
		where $A_1 = A \setminus \{a_1\}$. Hence 
		\begin{equation}
			|h_{\pm}^\wedge A| \geq 
			\begin{cases}
				h^2 + 3h, &\text{if $a_3 = 2a_1 + a_2$};\\
				h^2 + 4h - 1, &~\text{if $a_3 \neq 2a_1 + a_2$.}   
			\end{cases}
		\end{equation}
	\end{lemma}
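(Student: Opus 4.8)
From $a_1\not\equiv a_2$ and $a_1\not\equiv a_3\pmod2$ we get $a_2\equiv a_3\pmod2$. Put $\tau=\sum_{i=1}^{h+1}a_i$ and $A^{(i)}=A\setminus\{a_i\}$ for $1\le i\le h+1$; each $A^{(i)}$ has $h$ elements, $h_\pm^\wedge A^{(i)}$ is its full set of signed sums, and $h_\pm^\wedge A=\bigcup_i h_\pm^\wedge A^{(i)}$ (choosing which coefficient is $0$). By Fact~$2$ every element of $h_\pm^\wedge A^{(i)}$ is $\equiv\tau-a_i\pmod2$, so Fact~$1$ makes the slab $h_\pm^\wedge A_1=h_\pm^\wedge A^{(1)}$ disjoint from both $h_\pm^\wedge A^{(2)}$ and $h_\pm^\wedge A^{(3)}$, which lie in the common class $\tau-a_2\equiv\tau-a_3\not\equiv\tau-a_1$. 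Hence
\[ |h_\pm^\wedge A|\ \ge\ |h_\pm^\wedge A_1|\ +\ \big|\,h_\pm^\wedge A^{(2)}\cup h_\pm^\wedge A^{(3)}\,\big|, \]
which already has the shape of the claimed inequality. Since $|h_\pm^\wedge A_1|=|\Sigma(A_1)|\ge\frac{h(h+1)}2+1$ by the elementary subset-sum bound for the $h$-element set $A_1$, it will also give the two ``Hence'' estimates $h^2+3h$ and $h^2+4h-1$ once we show $\big|h_\pm^\wedge A^{(2)}\cup h_\pm^\wedge A^{(3)}\big|\ge\frac{h(h+1)}2+2h-1$ in general and $\ge\frac{h(h+1)}2+3h-2$ when $a_3\ne2a_1+a_2$.

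For the core estimate, write (Fact~$2$ once more) $h_\pm^\wedge A^{(2)}=M_2-2\ast\Sigma(C_2)$ and $h_\pm^\wedge A^{(3)}=M_3-2\ast\Sigma(C_3)$, where $C_2=\{a_1,a_3,a_4,\dots,a_{h+1}\}$, $C_3=\{a_1,a_2,a_4,\dots,a_{h+1}\}$, $M_2=\tau-a_2=\max h_\pm^\wedge A^{(2)}$ and $M_3=\tau-a_3$. As $|C_2|=h$ we already have $|h_\pm^\wedge A^{(2)}|=|\Sigma(C_2)|\ge\frac{h(h+1)}2+1$, so it suffices to produce, respectively, $2h-2$ and $3h-3$ elements of $h_\pm^\wedge A^{(3)}$ outside $h_\pm^\wedge A^{(2)}$. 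Now $M_3-2t$ with $t\in\Sigma(C_3)$ lies outside $h_\pm^\wedge A^{(2)}$ exactly when $t+\delta\notin\Sigma(C_2)$, where $\delta:=\tfrac12(M_2-M_3)=\tfrac12(a_3-a_2)$, a positive integer (using $a_2\equiv a_3$); so we must exhibit enough $t\in\Sigma(C_3)$ with $t+\delta\notin\Sigma(C_2)$.

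I would take three families of witnesses, each of size $h-1$ and contained in $\Sigma(C_3)$: the set $\{0,a_4,\dots,a_{h+1}\}$ and its two translates $a_1+\{0,a_4,\dots,a_{h+1}\}$ and $a_2+\{0,a_4,\dots,a_{h+1}\}$. The decisive structural fact is that $C_2$ has no subset sum strictly between its two smallest positive elements $a_1<a_3$, i.e.\ $\Sigma(C_2)\cap(a_1,a_3)=\emptyset$; from $0<\delta<a_3$ together with $a_1<a_1+\delta<a_3$ and $a_1<a_2+\delta=\tfrac12(a_2+a_3)<a_3$ one reads off that the shifts $a_1+\delta$ and $a_2+\delta$ miss $\Sigma(C_2)$, so the last two families always work, while the first works iff $\delta\notin\Sigma(C_2)$, i.e.\ (as $0<\delta<a_3$) iff $\delta\ne a_1$, i.e.\ iff $a_3\ne2a_1+a_2$. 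This gives $3(h-1)=3h-3$ new elements in general and $2(h-1)=2h-2$ in the special case --- once one has checked that the ``$+a_j$'' members ($j\ge4$) of each family also avoid $\Sigma(C_2)$ and that the (at most) three families remain pairwise disjoint, i.e.\ ruled out the sporadic coincidences $a_k-a_j=\delta$, $a_j=a_1+a_k$, $a_k-a_j=\delta+a_1$, and the like.

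The main obstacle is exactly that last bookkeeping: confirming no shifted witness $t+\delta$ accidentally hits a subset sum of $C_2$ and that the witness sets are genuinely disjoint of the stated total size. I expect this is cleanest after splitting on whether $a_4>a_1+a_3$ (so that $\Sigma(C_2)$ has nothing in $(a_1+a_3,a_4)$ and the estimate is almost immediate) or $a_4\le a_1+a_3$ (so that $\Sigma(C_2)$ is denser low down and a bounded list of coincidences must be excluded by hand), with the borderline case $h=3$ --- where $\{a_4,\dots,a_{h+1}\}$ is a single element --- verified directly; in every branch it is the single identity $\delta=a_1\iff a_3=2a_1+a_2$ that annihilates one whole witness family and thereby produces the two cases in the statement.
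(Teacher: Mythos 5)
Your reduction is sound up to the final counting step: the decomposition $h_{\pm}^{\wedge}A=\bigcup_{i}h_{\pm}^{\wedge}A^{(i)}$, the parity argument separating $h_{\pm}^{\wedge}A_1$ from $h_{\pm}^{\wedge}A^{(2)}\cup h_{\pm}^{\wedge}A^{(3)}$, the translation to subset sums via Fact $2$, the criterion ``$M_3-2t\notin h_{\pm}^{\wedge}A^{(2)}$ iff $t+\delta\notin\Sigma(C_2)$'', and the observation that $t=a_1,a_2$ always work while $t=0$ works exactly when $a_3\neq 2a_1+a_2$ are all correct. (Note the paper only imports this lemma from \cite{mistri-prajapati2025a} without reproducing its proof, so there is nothing to compare line by line; but the analogous lemmas proved here proceed by exhibiting a single increasing chain of elements, not by the additive split you propose.)

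The genuine gap is the plan to bound $|h_{\pm}^{\wedge}A^{(2)}\cup h_{\pm}^{\wedge}A^{(3)}|$ from below by $\bigl(\frac{h(h+1)}{2}+1\bigr)$ plus $2h-2$ (resp.\ $3h-3$) witnesses: that many witnesses need not exist. Take $h=4$ and $A=\{1,2,4,5,6\}$, so $a_3=2a_1+a_2$ and $\delta=1$. Then $C_2=\{1,4,5,6\}$, $C_3=\{1,2,5,6\}$, $\Sigma(C_2)=\{0,1,4,5,6,7,9,10,11,12,15,16\}$, and a direct check shows the only $t\in\Sigma(C_3)$ with $t+1\notin\Sigma(C_2)$ are $t\in\{1,2,7,12,13\}$ --- five witnesses, where your scheme demands six. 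Concretely, your witness $t=a_1+a_4=6$ fails because $6+\delta=7=a_1+a_5\in\Sigma(C_2)$, and your two families overlap at $a_1+a_5=a_2+a_4=7$. The union still has the required size $17=\frac{h(h+1)}{2}+2h-1$, but only as $12+5$, i.e.\ because $|\Sigma(C_2)|$ exceeds its minimum $11$ by exactly the witness deficit. So the two quantities cannot be bounded separately; a correct proof must couple them (show each failed witness forces an extra element of $\Sigma(C_2)$) or construct $\frac{h(h+1)}{2}+2h-1$ elements of the union directly. The ``sporadic coincidences'' you defer are therefore not bookkeeping --- in the example above they genuinely cannot all be ruled out, and this is where the proof still has to be done.
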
 

	\begin{lemma}[{\cite[Lemma $2.6$]{mistri-prajapati2025a}}]\label{rssp-lem5}
		Let $h \geq 4$ be an integer. Let $A = \{a_1, \ldots, a_{h + 1}\}$ be a set of positive integers such that $a_1 < \cdots < a_{h + 1}$. Furthermore, assume that 
		\[a_2 \not \equiv a_1 \pmod {2} ~\text{and}~ a_3 \equiv a_1 \pmod {2}.\]
		Then
		\[|h_{\pm}^\wedge A| \geq |h_{\pm}^\wedge A_2| + \frac{h(h + 1)}{2} + h,\]
		where $A_2 = A \setminus \{a_2\}$. Hence
		\begin{equation}
			|h_{\pm}^\wedge A| \geq 
			\begin{cases}
				h^2 + 2h + 2, &\text{if $A_2$ is not in an A. P. and $h \geq 4$};\\
				\frac{1}{2} h(3h - 1) + 4, &\text{if $h \geq 4$, $A_2$ is an A. P. and $a_2 \not \equiv 0 \pmod 2$};\\
				26, &\text{if $A_2$ is an A. P. and $a_2 \equiv 0 \pmod 2$, $h = 4$};\\
				2h(h - 1), &\text{if $A_2$ is an A. P. and $a_2 \equiv 0 \pmod 2$, $h \geq 5$.} 
			\end{cases}
		\end{equation}
	\end{lemma}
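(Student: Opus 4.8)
The plan is to peel off $a_2$ and exploit the parity gap created by the hypotheses. Since $|A|=h+1$ and an element of $h_{\pm}^\wedge A$ uses exactly $h$ of the $k=h+1$ coefficients, we have $h_{\pm}^\wedge A=\bigcup_{j=1}^{h+1} h_{\pm}^\wedge A_j$, where $A_j=A\setminus\{a_j\}$ is an $h$-element set and $h_{\pm}^\wedge A_j$ is its full $\pm 1$ signed sum. By the parity fact recalled above (item (1)), every element of $h_{\pm}^\wedge A_j$ is congruent to $S-a_j\pmod 2$, where $S=\sum_{i=1}^{h+1}a_i$. The hypotheses $a_2\not\equiv a_1$ and $a_3\equiv a_1\pmod 2$ put both $h_{\pm}^\wedge A_1$ and $h_{\pm}^\wedge A_3$ in the residue class $S-a_1$, which differs from the class $S-a_2$ of $h_{\pm}^\wedge A_2$. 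Hence $h_{\pm}^\wedge A_2$ is disjoint from $h_{\pm}^\wedge A_1\cup h_{\pm}^\wedge A_3$, and $|h_{\pm}^\wedge A|\ge |h_{\pm}^\wedge A_2|+|h_{\pm}^\wedge A_1\cup h_{\pm}^\wedge A_3|$.

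It therefore suffices to prove $|h_{\pm}^\wedge A_1\cup h_{\pm}^\wedge A_3|\ge \frac{h(h+1)}{2}+h$. Applying Theorem \ref{thm:3} with $k=h$ to the $h$-element sets of positive integers $A_1$ and $A_3$ gives $|h_{\pm}^\wedge A_1|,\,|h_{\pm}^\wedge A_3|\ge\frac{h(h+1)}{2}+1$, so, writing the union as $|h_{\pm}^\wedge A_3|+|h_{\pm}^\wedge A_1\setminus h_{\pm}^\wedge A_3|$, the task reduces to exhibiting $h-1$ elements of $h_{\pm}^\wedge A_1$ that do not lie in $h_{\pm}^\wedge A_3$. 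To compare the two sets I will use the reflection identity (item (2)): each $h_{\pm}^\wedge A_i$ is symmetric about $0$ and equals $\max(h_{\pm}^\wedge A_i)-2\ast\Sigma(A_i)$. Put $C=A_1\cap A_3=\{a_2,a_4,\ldots,a_{h+1}\}$, $M_3=\max(h_{\pm}^\wedge A_3)=a_1+\sum C$ and $M_1=\max(h_{\pm}^\wedge A_1)=a_3+\sum C=M_3+2D$, where $D=(a_3-a_1)/2$ is a positive integer because $a_3\equiv a_1\pmod 2$.

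There are two sources of elements in $h_{\pm}^\wedge A_1\setminus h_{\pm}^\wedge A_3$. First, the extremal ones: an element $M_1-2t$ of $h_{\pm}^\wedge A_1$ with $t\in\Sigma(A_1)$ and $t<D$ exceeds $M_3=\max(h_{\pm}^\wedge A_3)$, hence lies outside $h_{\pm}^\wedge A_3$; together with their negatives (using symmetry) these give $2r$ new elements, where $r=|\Sigma(A_1)\cap[0,D)|$. When $a_3$ is large relative to $a_1$ and $a_2$ (so that $D$ is large and $\Sigma(A_1)$ has many subset sums below $D$), this already produces the required $h-1$ elements. The difficult regime is when $a_3$ is small, for instance $a_3=a_1+2$ or $a_3=2a_1+a_2$, where $r$ is as small as $1$; there one must instead track interior collisions, i.e. count $s\in\Sigma(A_3)$ for which the shifted value $s+D$ fails to be a subset sum in $\Sigma(A_1)$. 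I expect this subset-sum collision count --- guaranteeing $h-1$ collision-free elements uniformly across all low-end gap patterns of $A$ --- to be the main obstacle, and I would resolve it by a case split on the bottom gaps of $A$, paralleling the $a_3=2a_1+a_2$ dichotomy of Lemma \ref{rssp-lem4}.

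Finally, for the ``Hence'' consequences I will feed lower bounds for $|h_{\pm}^\wedge A_2|$ into the recursion, where $A_2=\{a_1,a_3,a_4,\ldots,a_{h+1}\}$ satisfies $a_1\equiv a_3\pmod 2$. If $A_2$ is not an arithmetic progression, the inverse characterisation of the extremal sets (Theorem \ref{thm:4}) forces $|h_{\pm}^\wedge A_2|\ge\frac{h(h+1)}{2}+2$, which after adding $\frac{h(h+1)}{2}+h$ yields $h^2+2h+2$. If $A_2$ is an arithmetic progression its common difference $a_3-a_1$ is even, so all of $A_2$ shares the parity of $a_1$: when $a_2$ is odd, $A_2$ is an even progression and I reduce via $h_{\pm}^\wedge(c\ast A)=c\ast h_{\pm}^\wedge A$; when $a_2$ is even, $A_2$ is an odd progression and Lemma \ref{rssp-lem2} applies. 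Substituting the corresponding values of $|h_{\pm}^\wedge(\text{A.P.})|$ and simplifying produces the three remaining estimates. I anticipate the borderline minimal progressions, where the generic bound $\frac{h(h+1)}{2}+h$ is only barely enough (such as the instance giving the value $26$ at $h=4$), to require the sharper interior count of the previous paragraph rather than the crude substitution.
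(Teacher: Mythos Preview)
This lemma is quoted from the companion paper \cite{mistri-prajapati2025a} and is not proved in the present paper, so there is no proof here to compare against directly. That said, the present paper proves a close analogue (Lemma~\ref{rssn-lem12}, the nonnegative version with $a_1=0$), and one line in its proof even invokes ``Case~1 in the proof of Lemma~\ref{rssp-lem5}'', so the method of the original is visible.

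Your high-level decomposition is the right one and matches the paper: use the parity hypothesis to make $h_{\pm}^\wedge A_2$ disjoint from $h_{\pm}^\wedge A_1$, and then argue that $h_{\pm}^\wedge A_1$ (possibly together with other pieces in the same residue class) contributes at least $\frac{h(h+1)}{2}+h$ elements. Where you diverge is in how to secure that count. You propose to take $|h_{\pm}^\wedge A_3|\ge\frac{h(h+1)}{2}+1$ from Theorem~\ref{thm:3} and then locate $h-1$ elements of $h_{\pm}^\wedge A_1\setminus h_{\pm}^\wedge A_3$ by a collision-counting argument on $\Sigma(A_1)$ versus $\Sigma(A_3)$, and you yourself flag this as ``the main obstacle'' and leave it to an unspecified case split. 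That is the genuine gap: as written, the proposal is a plan, not a proof, and the hard step is precisely the one you have not carried out.

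By contrast, the method visible in Lemma~\ref{rssn-lem12} (and presumably in the original) does not attempt to compare $\Sigma(A_1)$ with $\Sigma(A_3)$ at all. It works entirely inside $h_{\pm}^\wedge A_1$ and writes down an explicit chain of subsets $B_0,\ldots,B_h,C_0,\ldots,C_{h-2}\subseteq h_{\pm}^\wedge A_1$, ordered so that $\max(B_i)<\min(C_i)<\min(B_{i+1})$, whose total cardinality is exactly $\frac{h(h+1)}{2}+h$; a two-case split on the sign of $a_3-2a_2$ (the analogue of your ``$a_3=2a_1+a_2$'' dichotomy) dictates the shape of the $C_j$. This constructive approach sidesteps your collision problem entirely. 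For the ``Hence'' clauses in the arithmetic-progression cases, note from the proof of Lemma~\ref{rssn-lem12} that the sharper bounds (e.g.\ $\frac{3}{2}h(h-1)+3$, and the values corresponding to $\frac{1}{2}h(3h-1)+4$ and $2h(h-1)$ in Lemma~\ref{rssp-lem5}) are \emph{not} obtained by substituting into the recursion $|h_{\pm}^\wedge A_2|+\frac{h(h+1)}{2}+h$; rather, the authors switch to the disjoint-union bound $|h_{\pm}^\wedge A|\ge |h_{\pm}^\wedge A_1|+|h_{\pm}^\wedge A_2|$ and compute $|h_{\pm}^\wedge A_1|$ directly for the specific progression. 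Your remark that the crude substitution is insufficient here is correct, but the fix is this change of decomposition, not a refinement of the interior count.
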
 
	
	We need the following additional lemmas for the proof Theorem \ref{rssn-thm-2} and Theorem \ref{rssn-thm-3}.

	\begin{lemma}\label{rssn-lem8}
		Let $h \geq 4$ be an integer. Let $A = \{a_1, a_2, \ldots, a_{h}\}$ be a set of integers such that $A \setminus \{0\}$ be set of odd integers, and let $0 = a_1 < a_2 < \cdots < a_{h}$. Then 
		\[|h_{\pm}^\wedge A|\geq h^2 - 2h.\]	
	\end{lemma}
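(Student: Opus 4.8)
The plan is to reduce Lemma~\ref{rssn-lem8} to Lemma~\ref{rssp-lem2}. First I would observe that since $|A| = h$, any sum $\sum_{i=1}^{h}\lambda_i a_i$ occurring in $h_{\pm}^{\wedge}A$ has $\lambda_i \in \{-1,0,1\}$ with $\sum_{i=1}^{h}|\lambda_i| = h$, and this forces $|\lambda_i| = 1$ for every $i$; that is, $\lambda_i \in \{-1,1\}$ for all $i$. Using $a_1 = 0$, the coefficient of $a_1$ contributes nothing, so
\[
h_{\pm}^{\wedge}A = \left\{\sum_{i=2}^{h}\lambda_i a_i : \lambda_i \in \{-1,1\}\ \text{for}\ i = 2,\ldots,h\right\}.
\]
Setting $B = A \setminus \{0\} = \{a_2,\ldots,a_h\}$, the identical observation applied to the $(h-1)$-element set $B$ shows that the right-hand side above is precisely $(h-1)_{\pm}^{\wedge}B$. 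Hence $h_{\pm}^{\wedge}A = (h-1)_{\pm}^{\wedge}B$, and in particular $|h_{\pm}^{\wedge}A| = |(h-1)_{\pm}^{\wedge}B|$.

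Next I would apply Lemma~\ref{rssp-lem2} to $B$. By hypothesis $B$ is a set of $h-1$ distinct odd positive integers, and $h - 1 \geq 3$ since $h \geq 4$, so Lemma~\ref{rssp-lem2}, with $h-1$ in place of $h$, gives $|(h-1)_{\pm}^{\wedge}B| \geq (h-1)^2 - 1 = h^2 - 2h$. Combining this with the equality from the previous step yields $|h_{\pm}^{\wedge}A| \geq h^2 - 2h$, which is the desired bound.

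I do not expect any genuine obstacle here: the only point requiring care is the bookkeeping observation that the coefficient of $a_1 = 0$ is forced to be $\pm 1$ yet irrelevant, so that $h_{\pm}^{\wedge}A$ collapses to a full restricted signed sumset of the $(h-1)$-element set of odd positive integers, after which all the quantitative content is carried by Lemma~\ref{rssp-lem2}. If one preferred to avoid citing Lemma~\ref{rssp-lem2}, one could instead invoke Fact~(2) to rewrite $|(h-1)_{\pm}^{\wedge}B| = |\Sigma(B)|$ and then bound the number of subset sums of a set of $h-1$ odd positive integers directly, but the reduction above is the shortest route.
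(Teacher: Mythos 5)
Your proof is correct and follows essentially the same route as the paper: the paper likewise observes that $h_{\pm}^{\wedge}A = 0 + (h-1)_{\pm}^{\wedge}(A\setminus\{0\})$ and then applies Lemma~\ref{rssp-lem2} to the $(h-1)$-element set of odd positive integers to get $(h-1)^2 - 1 = h^2 - 2h$. Your write-up merely spells out in more detail why the coefficient of $a_1=0$ is forced to be $\pm 1$ yet irrelevant, which the paper leaves as "easy to see."
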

	
	\begin{proof}
		It is easy to see that 
		\[h_{\pm}^\wedge A = 0 + (h - 1)_{\pm}^\wedge A_1,\]
		where $A_1 = A \setminus \{0\}$. Hence by applying Lemma \ref{rssp-lem2}, we get
		\[|h_{\pm}^\wedge A| = |(h - 1)_{\pm}^\wedge A_1| \geq (h - 1)^2 - 1 = h^2 - 2h.\]
		This completes the proof.	
	\end{proof}
	
	\begin{lemma}\label{rssn-lem9}
		Let $h \geq 4$ be an integer. Let $A = \{a_1, a_2, \ldots, a_{h + 1}\}$ be a set of nonnegative integers such that $0 = a_1 < a_2 < \cdots < a_{h + 1}$. Furthermore, assume that
		\[a_3 \equiv a_2 \pmod {2} ~\text{and}~ a_r \not \equiv a_2 \pmod {2}\] for some  $r \in [4, h + 1]$. Then
		\[|h_{\pm}^\wedge A| \geq |h_{\pm}^\wedge A_r| + \frac{h(h - 1)}{2} + 2h + 1,\]
		where $A_r = A \setminus \{a_r\}$. Hence 
		\[|h_{\pm}^\wedge A| \geq h^2 + h + 2.\]	
	\end{lemma}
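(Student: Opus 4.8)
The plan is to pass from $A$ to the set $A' = A \setminus \{0\} = \{a_2,\ldots,a_{h+1}\}$ of $h$ positive integers and invoke Lemma \ref{rssp-lem1} with the fold parameter $h-1$ in place of $h$ (legitimate since $h \ge 4$ forces $h-1 \ge 3$).

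First I would record the elementary bookkeeping coming from $0 \in A$. Because the coefficient attached to $a_1 = 0$ may be taken to be $\pm 1$ without altering the value of a signed sum, one gets $(h-1)_\pm^\wedge A' \subseteq h_\pm^\wedge A$; taking it to be $0$ gives $h_\pm^\wedge A' \subseteq h_\pm^\wedge A$. Likewise, since $|A_r| = h$ and one element of $A_r$ is $0$, every admissible coefficient vector for $h_\pm^\wedge A_r$ has all entries in $\{-1,1\}$, so $h_\pm^\wedge A_r = (h-1)_\pm^\wedge (A' \setminus \{a_r\})$, the full sign-sumset of the $(h-1)$-element set $A' \setminus \{a_r\}$.

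Next I would verify the hypotheses of Lemma \ref{rssp-lem1} for $A'$: its two smallest elements are $a_2 < a_3$ with $a_2 \equiv a_3 \pmod 2$, while $a_r$, occupying position $r-1 \in [3,h]$ inside $A'$, satisfies $a_r \not\equiv a_2 \pmod 2$. Lemma \ref{rssp-lem1} (applied with fold $h-1$) then gives
\[ |(h-1)_\pm^\wedge A'| \ge |(h-1)_\pm^\wedge(A'\setminus\{a_r\})| + \tfrac{(h-1)h}{2} + 2(h-1) + 1 = |h_\pm^\wedge A_r| + \tfrac{h(h-1)}{2} + 2h - 1, \]
using the identification above. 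This is short of the claimed bound by exactly $2$, so the only genuinely delicate step is to produce two elements of $h_\pm^\wedge A$ lying outside $(h-1)_\pm^\wedge A'$. The extremal values $\pm S$, where $S = a_2 + \cdots + a_{h+1}$, do the job: $S \in h_\pm^\wedge A' \subseteq h_\pm^\wedge A$, whereas every element of $(h-1)_\pm^\wedge A'$ has absolute value at most $S - a_2 < S$. Adjoining $\pm S$ yields $|h_\pm^\wedge A| \ge |(h-1)_\pm^\wedge A'| + 2 \ge |h_\pm^\wedge A_r| + \tfrac{h(h-1)}{2} + 2h + 1$, which is the asserted inequality.

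For the final ``hence'' assertion I would apply Theorem \ref{thm:5} to $A_r$, a set of $h$ nonnegative integers containing $0$, with both of its parameters equal to $h$, obtaining $|h_\pm^\wedge A_r| \ge \tfrac{h(h-1)}{2} + 1$; substituting this into the displayed inequality gives $|h_\pm^\wedge A| \ge h(h-1) + 2h + 2 = h^2 + h + 2$. I do not expect a real obstruction anywhere; the one point needing attention is the arithmetic of the additive constant — recognizing that Lemma \ref{rssp-lem1} applied with fold $h-1$ falls precisely two short, and closing that gap cleanly with the pair $\pm S$.
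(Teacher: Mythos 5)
Your argument is correct and coincides with the paper's own proof: both reduce to $A_1 = A\setminus\{0\}$, apply Lemma \ref{rssp-lem1} with fold $h-1$ to get the bound with constant $\frac{h(h-1)}{2}+2h-1$, recover the missing $2$ from the extremal pair $\pm(a_2+\cdots+a_{h+1})$ lying outside $(h-1)_{\pm}^{\wedge}A_1$, and finish with Theorem \ref{thm:5} applied to $A_r$. The identification $h_{\pm}^{\wedge}A_r=(h-1)_{\pm}^{\wedge}(A_1\setminus\{a_r\})$ is likewise exactly the step the paper uses.
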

	
	\begin{proof}
		Let $S = \{a_2 + \cdots + a_{h + 1}, - a_2 - \cdots - a_{h + 1}\}$, and let $A_1 = A \setminus \{0\}$. Then 
		\[S \cup (0 + (h - 1)_{\pm}^\wedge A_1) \subseteq h_{\pm}^\wedge A.\]
		Since
		\[- a_2 - \cdots - a_{h + 1} < \min (0 + (h - 1)_{\pm}^\wedge A_1) < \max (0 + (h - 1)_{\pm}^\wedge A_1) < a_2 + \cdots + a_{h + 1},\]
		it follows that 
		\[	|h_{\pm}^\wedge A| \geq |(0 + (h - 1)_{\pm}^\wedge A_1)| + |S|.\]
		Hence
		\begin{equation}\label{rssn-lemeq:3}
			|h_{\pm}^\wedge A| \geq |(h - 1)_{\pm}^\wedge A_1| + 2.
		\end{equation}
		It follows from Lemma \ref{rssp-lem1} that
		\[|(h - 1)_{\pm}^\wedge A_1| \geq |(h - 1)_{\pm}^\wedge B_r| + \frac{h(h - 1)}{2} + 2 (h - 1) + 1,\]
		where $B_r = A \setminus \{0, a_r\}$.
		It is easy to see that 
		\[|(h - 1)_{\pm}^\wedge B_r| = |0 + (h - 1)_{\pm}^\wedge B_r| = |h_{\pm}^\wedge A_r|.\]
		Thus we have
		\begin{align*}
			|(h - 1)_{\pm}^\wedge A_1| & \geq |h_{\pm}^\wedge A_r| + \frac{h(h - 1)}{2} + 2 (h - 1) + 1.
		\end{align*}
		Therefore,
		\begin{equation}\label{rssn-lemeq:4}
			|(h - 1)_{\pm}^\wedge A_1| \geq |h_{\pm}^\wedge A_r| + \frac{h(h - 1)}{2} + 2h - 1.
		\end{equation}
		Using \eqref{rssn-lemeq:3} and \eqref{rssn-lemeq:4}, we get
		\[|h_{\pm}^\wedge A| \geq |h_{\pm}^\wedge A_r| + \frac{h(h - 1)}{2} + 2h + 1.\]
		Hence it follows from Theorem \ref{thm:5} that
		\begin{align*}
			|h_{\pm}^\wedge A| & \geq \frac{h(h - 1)}{2} + 1 + \frac{h(h - 1)}{2} + 2h + 1 = h^2 + h + 2.	
		\end{align*}
		This completes the proof.	
	\end{proof}
	
	\begin{lemma}\label{rssn-lem10}
		Let $h \geq 4$ be an integer. Let $A = \{0, a_2, \ldots, a_{h + 1}\}$ be a set of nonnegative integers such that $0 < a_2 < \cdots < a_{h + 1}$. Furthermore, assume that
		$A \setminus \{0\}$ is the set of odd integers. Then
		\[|h_{\pm}^\wedge A| \geq |h_{\pm}^\wedge A_1| + |h_{\pm}^\wedge A_2|,\]
		where $A_1 = A \setminus \{0\}$ and $A_2 = A \setminus \{a_2\}$. Hence 
		\[|h_{\pm}^\wedge A| \geq 2h(h - 1) - 1.\]	
	\end{lemma}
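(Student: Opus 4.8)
The plan is to produce, inside $h_{\pm}^\wedge A$, two explicitly described subsets that coincide with $h_{\pm}^\wedge A_1$ and $h_{\pm}^\wedge A_2$ respectively, and then separate them by parity. Since $|A| = h+1$ and every sum counted in $h_{\pm}^\wedge A$ has the form $\sum_{i=1}^{h+1}\lambda_i a_i$ with $\lambda_i \in \{-1,0,1\}$ and $\sum_{i=1}^{h+1}|\lambda_i| = h$, exactly one coefficient vanishes and the other $h$ equal $\pm 1$. If the vanishing coefficient is $\lambda_1$ (the one attached to $a_1 = 0$), the sum ranges over all $\sum_{i=2}^{h+1}\varepsilon_i a_i$ with $\varepsilon_i = \pm1$, which is exactly $h_{\pm}^\wedge A_1$ because $|A_1| = h$. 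If instead $\lambda_1 = \pm 1$ and $\lambda_2 = 0$, then, since $\lambda_1 a_1 = 0$, the sum ranges over all $\sum_{i=3}^{h+1}\varepsilon_i a_i$ with $\varepsilon_i = \pm 1$; as $|A_2| = h$ and the coefficient of $0 \in A_2$ contributes nothing, this is exactly $h_{\pm}^\wedge A_2$. Hence $h_{\pm}^\wedge A_1 \cup h_{\pm}^\wedge A_2 \subseteq h_{\pm}^\wedge A$.

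Next I would check disjointness. Every element of $h_{\pm}^\wedge A_1$ is a $\pm$-signed sum of the $h$ odd integers $a_2,\ldots,a_{h+1}$, hence is $\equiv h \pmod 2$; every element of $h_{\pm}^\wedge A_2$ is a $\pm$-signed sum of the $h-1$ odd integers $a_3,\ldots,a_{h+1}$, hence is $\equiv h-1 \pmod 2$. Since $h \not\equiv h-1 \pmod 2$, the two sumsets are disjoint, so $|h_{\pm}^\wedge A| \geq |h_{\pm}^\wedge A_1| + |h_{\pm}^\wedge A_2|$, which is the first assertion of the lemma. This is essentially the same device (splitting by the location of the zero coefficient, then using a parity obstruction coming from $a_1 = 0$ being ``free'') used in the earlier lemmas.

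For the numerical bound, $A_1 = \{a_2,\ldots,a_{h+1}\}$ is a set of $h$ odd positive integers, so Lemma \ref{rssp-lem2} gives $|h_{\pm}^\wedge A_1| \geq h^2 - 1$, while $A_2 = \{0, a_3,\ldots,a_{h+1}\}$ is a set of $h$ integers whose smallest element is $0$ and whose remaining $h-1$ elements are odd, so Lemma \ref{rssn-lem8} gives $|h_{\pm}^\wedge A_2| \geq h^2 - 2h$; both applications are valid since $h \geq 4$. Adding these, $|h_{\pm}^\wedge A| \geq (h^2 - 1) + (h^2 - 2h) = 2h(h-1) - 1$. I do not expect a real obstacle here: the only point deserving care is the bookkeeping that the ``position of the zero coefficient'' decomposition yields precisely $h_{\pm}^\wedge A_1$ and $h_{\pm}^\wedge A_2$ — not a restricted $(h-1)$-fold sumset of a proper subset — which holds exactly because $a_1 = 0$ and because $A_1$ and $A_2$ each have exactly $h$ elements, forcing all coefficients in their restricted $h$-fold signed sumsets to be $\pm 1$.
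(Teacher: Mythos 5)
Your proposal is correct and follows essentially the same route as the paper: the paper also observes that $h_{\pm}^\wedge A_1$ and $h_{\pm}^\wedge A_2$ are disjoint subsets of $h_{\pm}^\wedge A$ (via the parity fact recalled in Section 2, since $a_1 = 0$ and $a_2$ have different parities) and then adds the bounds from Lemma \ref{rssp-lem2} and Lemma \ref{rssn-lem8}. Your write-up merely makes explicit the ``which coefficient vanishes'' bookkeeping and the mod-$2$ argument that the paper leaves implicit.
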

	
	\begin{proof}
		Since the sumsets $h_{\pm}^\wedge A_1$ and $h_{\pm}^\wedge A_2$ are disjoint subsets of $h_{\pm}^\wedge A$, it follows that 
		\[|h_{\pm}^\wedge A| \geq |h_{\pm}^\wedge A_1| + |h_{\pm}^\wedge A_2|.\]
		Using Lemma \ref{rssp-lem2} and Lemma \ref{rssn-lem8}, we get
		\begin{align*}
			|h_{\pm}^\wedge A| &\geq  (h^2 - 1) + (h^2 - 2h) = 2h(h - 1) - 1.	
		\end{align*}
		This completes the proof.	
	\end{proof}
	
	\begin{lemma}\label{rssn-lem11}
		Let $h \geq 3$ be an integer. Let $A = \{a_1, a_2, \ldots, a_{h + 1}\}$ be a set of nonnegative integers such that $0 = a_1 < a_2 < \cdots < a_{h + 1}$. Furthermore, assume that
		\[a_3 \not \equiv 0 \pmod {2} ~\text{and}~ a_2 \equiv 0 \pmod 2.\]
		Then
		\[|h_{\pm}^\wedge A| \geq |h_{\pm}^\wedge A_3| + \frac{h(h + 1)}{2} + h + 1,\]
		where $A_3 = A \setminus \{a_3\}$. Hence 
		\[|h_{\pm}^\wedge A| \geq h^2 + h + 2.\]  
	\end{lemma}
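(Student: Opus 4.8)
The plan is to first peel off a ``new'' parity class, then convert the remaining estimate into a statement about subset sums, which is where the real work lies. Concretely, write $A_1 = A\setminus\{a_1\}$, $A_2 = A\setminus\{a_2\}$ and $A_3 = A\setminus\{a_3\}$. Each $h_{\pm}^\wedge A_j$ is a subset of $h_{\pm}^\wedge A$ lying in a single residue class modulo $2$, and since $a_1 = 0$ and $a_2$ are even while $a_3$ is odd, Fact (1) of Section~\ref{section-rssn-aux-lemma} gives that $h_{\pm}^\wedge A_3$ is disjoint from $h_{\pm}^\wedge A_1$ and from $h_{\pm}^\wedge A_2$. Hence $|h_{\pm}^\wedge A| \ge |h_{\pm}^\wedge A_3| + |h_{\pm}^\wedge A_1 \cup h_{\pm}^\wedge A_2|$, so it suffices to prove
\begin{equation}\label{rssn-lem11-plan-eq1}
|h_{\pm}^\wedge A_1 \cup h_{\pm}^\wedge A_2| \ge \tfrac{h(h+1)}{2} + h + 1 .
\end{equation}

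\emph{Reduction to subset sums.} Since $a_1 = 0$, we have $h_{\pm}^\wedge A_2 = (h-1)_{\pm}^\wedge\{a_3,\dots,a_{h+1}\}$, and splitting $h_{\pm}^\wedge A_1$ according to the sign of the coefficient of $a_2$ gives $h_{\pm}^\wedge A_1 = (a_2 + h_{\pm}^\wedge A_2)\cup(-a_2 + h_{\pm}^\wedge A_2)$, hence $h_{\pm}^\wedge A_1 \cup h_{\pm}^\wedge A_2 = \{-a_2,\,0,\,a_2\} + h_{\pm}^\wedge A_2$. By Fact (2), $h_{\pm}^\wedge A_2 = \min(h_{\pm}^\wedge A_2) + 2\ast\Sigma(C)$ with $C := \{a_3,\dots,a_{h+1}\}$; writing $a_2 = 2b$ (a positive even integer), the set above is a translate of $2\ast(\{0,b,2b\}+\Sigma(C))$. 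Thus \eqref{rssn-lem11-plan-eq1} is equivalent to
\begin{equation}\label{rssn-lem11-plan-eq2}
\bigl|\{0,b,2b\}+\Sigma(C)\bigr| \ge \tfrac{h(h+1)}{2}+h+1 ,
\end{equation}
where $C$ is a set of $m := h-1 \ge 2$ distinct positive integers and $0 < 2b = a_2 < a_3 = \min C$.

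\emph{The subset-sum inequality.} Because $2b < \min C$, the set $\{b\}\cup C$ consists of $h$ distinct positive integers, so $\Sigma(\{b\}\cup C) = \Sigma(C)\cup(b+\Sigma(C))$ already has at least $\tfrac{h(h+1)}{2}+1$ elements; one must still locate $h$ further elements of $\{0,b,2b\}+\Sigma(C)$ inside $2b+\Sigma(C)$. I would prove \eqref{rssn-lem11-plan-eq2} by induction on $m$, the base case $m=2$ being checked directly, using the splitting $\Sigma(C) = \Sigma(\widehat C)\cup(c_m+\Sigma(\widehat C))$ for $\widehat C = C\setminus\{c_m\}$, so that $\{0,b,2b\}+\Sigma(C)$ is the union of $T$ and $c_m+T$ with $T := \{0,b,2b\}+\Sigma(\widehat C)$. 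If $c_m$ exceeds $\max T$ the two copies are disjoint and the bound follows at once; otherwise one produces the missing elements among the images under $t\mapsto t+c_m$ of the ``top layer'' $\{2b+\widehat\sigma,\ b+\widehat\sigma,\ \widehat\sigma,\ \widehat\sigma-c_1,\dots,\widehat\sigma-c_{m-1}\}\subseteq T$ (here $\widehat\sigma := \sum\widehat C$), corrected by a few ``bottom layer'' elements $c_m, c_m+b, c_m+2b$, relying on the complementation identity $\widehat\sigma-t\in\Sigma(\widehat C)\iff t\in\Sigma(\widehat C)$ and on the fact that $\Sigma(\widehat C)$ meets $(0,c_1)\supseteq(0,2b)$ in the empty set. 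Finally, combining \eqref{rssn-lem11-plan-eq2} with $|h_{\pm}^\wedge A_3|\ge \tfrac{h(h-1)}{2}+1$, which is Theorem~\ref{thm:5} applied to the $h$-element set $A_3\ni 0$, yields $|h_{\pm}^\wedge A| \ge \tfrac{h(h-1)}{2}+1 + \tfrac{h(h+1)}{2}+h+1 = h^2+h+2$.

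The principal obstacle is \eqref{rssn-lem11-plan-eq2}. The bound is sharp (for instance for $C$ an arithmetic progression $d\ast[1,m]$, or for the block $C=\{3,4,\dots,m+2\}$ with $b=1$), so the trivial Cauchy--Davenport estimate $|\{0,b,2b\}+\Sigma(C)|\ge|\Sigma(C)|+2$ falls well short; the argument must genuinely exploit both the staircase shape of a subset-sum set and the hypothesis $2b<\min C$, and several near-extremal configurations have to be inspected individually, which makes the combinatorial bookkeeping in this last step the delicate part of the proof.
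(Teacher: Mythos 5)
Your opening reduction is correct and coincides with the paper's first step: since $a_1=0$ and $a_2$ are even while $a_3$ is odd, $h_{\pm}^\wedge A_3$ is disjoint from $h_{\pm}^\wedge A_1\cup h_{\pm}^\wedge A_2$, and your identities $h_{\pm}^\wedge A_2=(h-1)_{\pm}^\wedge C$ (with $C=\{a_3,\dots,a_{h+1}\}$) and $h_{\pm}^\wedge A_1\cup h_{\pm}^\wedge A_2=\{-a_2,0,a_2\}+h_{\pm}^\wedge A_2$ correctly convert the lemma into the inequality $|\{0,b,2b\}+\Sigma(C)|\ge\frac{h(h+1)}{2}+h+1$ with $2b=a_2<\min C$; the final arithmetic via Theorem~\ref{thm:5} is also right. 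The genuine gap is that this inequality is never proved. You only sketch an induction on $m=|C|$ and explicitly defer the ``combinatorial bookkeeping'' in the inductive step, but that step is exactly where the content of the lemma lies: when $c_m-c_i\le 2b$ for some $i<m$, the translates $c_m+(\widehat{\sigma}-c_i)$ of your top layer need not leave $T$, the compensating ``bottom layer'' elements are not specified, and since the bound is attained with equality (e.g.\ $b=1$, $C=\{3,\dots,m+2\}$) there is no slack to absorb an unresolved case. (Your other claimed extremal example $C=d\ast[1,m]$ is not extremal: there all gaps of $\Sigma(C)$ exceed $2b$, so $|\{0,b,2b\}+\Sigma(C)|=3\bigl(\tfrac{m(m+1)}{2}+1\bigr)$, well above the bound.) As written, the proposal is therefore an incomplete proof.

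The paper closes exactly this count by a direct interleaving construction that needs no induction, and you could substitute it for your inductive step. Put $u=-a_2-\cdots-a_{h+1}=\min(h_{\pm}^\wedge A_1)$ and $v=-a_3-\cdots-a_{h+1}=u+a_2=\min(h_{\pm}^\wedge A_2)$. Inside $h_{\pm}^\wedge A_1$ take the standard staircase witness for $\Sigma(\{a_2,\dots,a_{h+1}\})$, translated by $u$ and dilated by $2$; it has $\frac{h(h+1)}{2}+1$ elements and is organized into blocks whose $j$-th block ends at $u+2\sigma_j$ and whose $(j+1)$-st block begins at $u+2a_2+2\sigma_j$, where $\sigma_j=a_{h+2-j}+\cdots+a_{h+1}$ is the length-$j$ suffix sum. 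Inside $h_{\pm}^\wedge A_2$ take the $h$ elements $v+2\sigma_j=u+a_2+2\sigma_j$ for $j=0,\dots,h-1$; each lies strictly between consecutive blocks because $0<a_2<2a_2$, so all $h$ are new. In your normalization this is precisely $\Sigma(\{2b\}\cup C)\subseteq\{0,2b\}+\Sigma(C)$ for the first $\frac{h(h+1)}{2}+1$ elements plus the $h$ points $b+(\text{suffix sums of }C)$ from $b+\Sigma(C)$, and it proves your key inequality outright.
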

	
	\begin{proof}
		Let $A_1 = A \setminus \{0\}$. Then
		\[h_{\pm}^\wedge A_3 \cup h_{\pm}^\wedge A_1 \subseteq h_{\pm}^\wedge A.\]
		Now we	consider the subsets $B_0, \ldots, B_h, C_0, \ldots, C_{h - 1}$ of $h_{\pm}^\wedge A_1$ as follows: 
		
		Let $u = - a_2 - \cdots - a_{h + 1}$, and let $v = a_1 - a_3 - \cdots - a_{h + 1}$. Define
		\begin{align*}
			B_0 & = \{u\},\\
			B_1 & = \{u + 2a_i : i = 2, \ldots, h + 1\},\\
			C_0 & = \{v\}.
		\end{align*}	
		For $j = 2, \ldots, h$, define
		\[B_j = \{u + 2a_i + a_{h + 3 - j} + \cdots + a_{h + 1} : i = 2, \dots, h + 2 - j\}.\] 
		Furthermore, for  $j = 1, \ldots, h - 1$, define
		\[C_j = \{v + 2(a_{h + 2 - j} + \cdots + a_{h + 1})\}.\]
		It is easy to see that 	
		\[\max (B_i) < \min (C_i) = \max (C_i) < \min (B_{i + 1})\]
		for $i = 0, \ldots, h - 1$. Hence sets $B_i$ and $C_j$ are disjoint for $i = 0, \ldots, h$ and $j = 0, \ldots, h - 1$. Since the sumsets $h_{\pm}^\wedge A_3$ and $h_{\pm}^\wedge A_1$ are disjoint, it follows that $B_0 \cup \cdots \cup B_h \cup C_0 \cup \cdots \cup C_{h - 1}$ and $h_{\pm}^\wedge A_3$ are disjoint sets. Hence
		\[h_{\pm}^\wedge A \supseteq h_{\pm}^\wedge A_3 \cup B_0 \cup \cdots \cup B_h \cup C_0 \cup \cdots \cup C_{h - 1}.\]
		So, we get
		\begin{align*}
			|h_{\pm}^\wedge A| &\geq |h_{\pm}^\wedge A_3| + \sum_{j = 0}^{h} |B_j| + \sum_{j = 0}^{h - 2} |C_j |\\
			&= |h_{\pm}^\wedge A_3| + 1 + \sum_{j = 1}^{h}|B_j| + \sum_{j = 0}^{h - 1} 1\\
			&= |h_{\pm}^\wedge A_3| + 1 + \sum_{j = 1}^{h}(h - j + 1) + h\\
			&= |h_{\pm}^\wedge A_3| + \frac{h(h + 1)}{2} + h + 1.
		\end{align*}
		Therefore, it follows from Theorem \ref{thm:5} that
		\begin{align*}
			|h_{\pm}^\wedge A| &\geq \frac{h(h - 1)}{2} + 1 + \frac{h(h + 1)}{2} + h + 1 \\
			&= h^2 + h + 2. 
		\end{align*}
		This completes the proof.
	\end{proof}
	
	\begin{lemma}\label{rssn-lem12}
		Let $h \geq 4$ be an integer. Let $A = \{0, a_2, \ldots, a_{h + 1}\}$ be a set of nonnegative integers such that $0 < a_2 < \cdots < a_{h + 1}$. Furthermore, assume that
		\[a_2 \not\equiv 0 \pmod 2,~ a_3 \equiv 0 \pmod 2 ~\text{and}~ a_3 \neq 2a_2.\]
		Then
		\[|h_{\pm}^\wedge A| \geq |h_{\pm}^\wedge A_2| + \frac{h(h + 1)}{2} + h,\]
		where $A_2 = A \setminus \{a_2\}$. Hence
		\begin{equation}\label{rssn-lem12e1}
			|h_{\pm}^\wedge A| \geq 
			\begin{cases}
				23, &\text{if $h = 4$};\\
				h^2 + h + 2, &\text{if $A_2$ is not in an A. P. and $h \geq 5$};\\
				\frac{3}{2} h(h - 1) + 3, & \text{if $A_2$ is an A. P. and $h \geq 5$.} 
			\end{cases}
		\end{equation} 
	\end{lemma}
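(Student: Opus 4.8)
The plan is to follow the strategy of the proof of Lemma~\ref{rssn-lem11}. Since $|A| = h+1$ while the fold is $h$, every element of $h_{\pm}^\wedge A$ omits exactly one coordinate, so $h_{\pm}^\wedge A_j \subseteq h_{\pm}^\wedge A$ for each $j$, where $A_j := A \setminus \{a_j\}$; moreover the element of $h_{\pm}^\wedge A$ omitting $a_j$ has residue $\bigl(\sum_i a_i\bigr) - a_j \equiv (1 + a_4 + \cdots + a_{h+1}) - a_j \pmod 2$, since $a_1 = 0$ and $a_3$ are even while $a_2$ is odd. Hence the elements omitting an odd term lie in the class $a_4 + \cdots + a_{h+1}$, those omitting an even term — in particular $a_1 = 0$ or $a_3$ — lie in the complementary class, and therefore $h_{\pm}^\wedge A_2$ is disjoint from $h_{\pm}^\wedge A_1 \cup h_{\pm}^\wedge A_3$. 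Writing $S := h_{\pm}^\wedge A_3$ and using that toggling the sign of $a_3$ carries an element of $h_{\pm}^\wedge A_3$ to the two corresponding elements of $h_{\pm}^\wedge A_1$, we get $h_{\pm}^\wedge A_1 \cup h_{\pm}^\wedge A_3 = S \cup (S + a_3) \cup (S - a_3)$, so
\[
|h_{\pm}^\wedge A| \;\geq\; |h_{\pm}^\wedge A_2| \;+\; \bigl|\, S \cup (S + a_3) \cup (S - a_3)\,\bigr|.
\]

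For the main inequality I would build, as in Lemma~\ref{rssn-lem11}, an explicit increasing chain of $\tfrac{h(h+1)}{2} + h$ elements inside $h_{\pm}^\wedge A_1 \cup h_{\pm}^\wedge A_3$. Starting from $u = \min h_{\pm}^\wedge A_1 = -a_2 - a_3 - \cdots - a_{h+1}$, take $B_0 = \{u\}$, $B_1 = \{u + 2a_i : 2 \leq i \leq h+1\}$, and, for $2 \leq j \leq h$,
\[
B_j = \bigl\{\, u + 2a_i + 2(a_{h+3-j} + \cdots + a_{h+1}) : 2 \leq i \leq h+2-j \,\bigr\} \subseteq h_{\pm}^\wedge A_1,
\]
which together contribute $1 + \tfrac{h(h+1)}{2}$ distinct elements; then interleave between consecutive blocks the singletons $C_0, \dots, C_{h-2}$ obtained by climbing from $v = \min h_{\pm}^\wedge A_3 = u + a_3$ inside $h_{\pm}^\wedge A_3$, adding another $h-1$ elements. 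The hypothesis $a_3 \neq 2a_2$ — equivalently $a_3 \neq 2a_i$ for all $i$, as $a_3 < a_4$ — guarantees $v = u + a_3 \notin B_1$, so the $C_j$ genuinely interleave the $B_j$ rather than collapsing into them; checking the interlacing $\max B_i < \min C_i \leq \max C_i < \min B_{i+1}$ is then a matter of comparing $a_3$ with $2a_2$, so I expect a short split into the cases $a_3 < 2a_2$ and $a_3 > 2a_2$. This yields $|h_{\pm}^\wedge A| \geq |h_{\pm}^\wedge A_2| + \tfrac{h(h+1)}{2} + h$.

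The numerical lower bounds then follow by cases on $A_2$, which is a set of $h$ nonnegative integers with least element $0$. If $h \geq 5$ and $A_2$ is not an arithmetic progression, then Theorem~\ref{thm:6} (applied with fold $h$ to $A_2$) forces $|h_{\pm}^\wedge A_2| \geq \tfrac{h(h-1)}{2} + 2$, whence $|h_{\pm}^\wedge A| \geq h^2 + h + 2$. If $h \geq 5$ and $A_2 = d \ast [0, h-1]$, then $A_1 = \{a_2\} \cup d \ast [1, h-1]$; since $a_2$ is odd, $d$ is even, and $0 < a_2 < d$ with $d \neq 2a_2$, the number $2a_2/d$ lies in $(0,2)$ and is not an integer, so $a_2 + d \ast M$ and $-a_2 + d \ast M$ are disjoint, where $M = (h-1)_{\pm}^\wedge [1, h-1]$ has $\tfrac{h(h-1)}{2}+1$ elements; hence $|h_{\pm}^\wedge A_1| = h^2 - h + 2$, which together with $|h_{\pm}^\wedge A_2| = \tfrac{h(h-1)}{2}+1$ and the parity disjointness gives exactly $|h_{\pm}^\wedge A| \geq \tfrac{3}{2}h(h-1) + 3$. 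When $h = 4$ one has $|4_{\pm}^\wedge A_2| = |\Sigma(\{a_3, a_4, a_5\})| \leq 8$, equal to $7$ precisely when $a_5 = a_3 + a_4$, so the bare main inequality never reaches $23$; instead one returns to $S \cup (S + a_3) \cup (S - a_3)$ and proves by a gap-filling analysis — $\Sigma(\{a_2, a_4, a_5\})$ is enlarged by its shifts by $\pm m$, $m = a_3/2$, and $a_3 \neq 2a_2$ prevents $\{a_2, a_4, a_5, m\}$ from being a dilate of $[1,4]$ — that $|4_{\pm}^\wedge A_1 \cup 4_{\pm}^\wedge A_3| \geq 16$, or $\geq 15$ when $a_5 \neq a_3 + a_4$; in either case $|4_{\pm}^\wedge A| \geq |4_{\pm}^\wedge A_2| + |4_{\pm}^\wedge A_1 \cup 4_{\pm}^\wedge A_3| \geq 23$.

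The hardest part is the combinatorial bookkeeping in the staircase: verifying that the listed $\tfrac{h(h+1)}{2} + h$ elements are pairwise distinct with the stated interlacing of the $B_j$ and $C_j$ blocks, where the position of $v = u + a_3$ relative to $B_1$ forces the split on the sign of $a_3 - 2a_2$. A second source of difficulty is that the bare main inequality is not strong enough for the arithmetic-progression subcase when $h \geq 5$, nor for any configuration when $h = 4$, so one must re-examine the set $S \cup (S \pm a_3)$ directly — exploiting the near-arithmetic-progression shape of $A_1$ in the first case, and a finer gap-filling count in the second — and it is there that the hypothesis $a_3 \neq 2a_2$ does the real work.
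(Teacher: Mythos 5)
Your main inequality and both $h \geq 5$ cases are sound and essentially follow the paper's own route: the staircase of blocks $B_j$ inside $h_{\pm}^\wedge A_1$ interleaved with elements drawn from $h_{\pm}^\wedge A_3$, all separated from $h_{\pm}^\wedge A_2$ by parity, is exactly the paper's construction (the paper realizes the split on the sign of $a_3 - 2a_2$ by moving $u + 2a_2$ into a two-element $C_0$ when $2a_2 < a_3$; your variant, where one instead verifies $C_j \cap B_{j+1} = \emptyset$ from $a_3 \neq 2a_i$, also delivers the count $\tfrac{h(h+1)}{2} + h$), and your direct computation of $|h_{\pm}^\wedge A_1| = h^2 - h + 2$ in the arithmetic-progression subcase is a clean substitute for the paper's appeal to the proof of Lemma \ref{rssp-lem5}.

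The genuine gap is the case $h = 4$. You correctly observe that $|4_{\pm}^\wedge A_2| + \tfrac{h(h+1)}{2} + h \leq 8 + 14 = 22 < 23$, so something extra is needed; but your replacement --- the claim that $|4_{\pm}^\wedge A_1 \cup 4_{\pm}^\wedge A_3| \geq 16$ when $a_5 = a_3 + a_4$ and $\geq 15$ otherwise --- is asserted, not proved, and it is far from routine. Writing $m = a_3/2$ and $\Sigma' = \Sigma(\{a_2, a_4, a_5\})$, the union in question is an affine image of $\Sigma' + \{0, m, 2m\}$, and the only free lower bound is $|\Sigma'| + 2 \leq 10$; the near-extremal configurations are those in which $\{0,a_2\} + \{0,a_4\} + \{0,a_5\} + \{0,m,2m\}$ is covered by few arithmetic progressions of difference $m$ (or of difference $a_2$), and your stated obstruction --- that $a_3 \neq 2a_2$ prevents $\{a_2, a_4, a_5, m\}$ from being a dilate of $[1,4]$ --- does not engage with these. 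Closing the distance from $10$ to $15$ is precisely the hard content of this case, and nothing in your sketch does it. The paper proceeds quite differently and much more cheaply: it singles out the two elements $\pm v$ with $v = -a_2 - a_4 - a_5 \in 4_{\pm}^\wedge A_3$, notes $v \notin 4_{\pm}^\wedge A_2$ by parity, and shows $v \notin 4_{\pm}^\wedge A_1$ because $v$ lies strictly between the first and third smallest elements of $4_{\pm}^\wedge A_1$ and differs from the second precisely because $a_3 \neq 2a_2$; these two elements are then added to $|4_{\pm}^\wedge A_1| + |4_{\pm}^\wedge A_2|$. To complete your version you must either carry out a genuine case analysis of $\Sigma' + \{0,m,2m\}$ establishing the $15/16$ bound, or switch to exhibiting explicit elements of $4_{\pm}^\wedge A_3$ missed by the $22$ already counted, as the paper does.
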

	
	\begin{proof}
		Let $A_1 = A \setminus \{0\}$. Then
		\[h_{\pm}^\wedge A_1 \cup h_{\pm}^\wedge A_2 \subseteq h_{\pm}^\wedge A.\]
		Now we consider the following cases.
		
		\noindent {\textbf{Case 1}} ($2a_2 < a_3$).
		Define the subsets $B_0, \ldots, B_h, C_0, \ldots, C_{h - 2}$ of $h_{\pm}^\wedge A_1$ as follows: 
		
		Let 
		\[u = - a_2 - \cdots - a_{h + 1} ~\text{and}~ v = 0 + a_2 - a_4 - \cdots - a_{h + 1}.\] 
		Define
		\begin{align*}
			B_0 & = \{u\},\\
			B_1 & = \{u + 2a_i : i = 3, \ldots, h + 1\},\\
			B_h & = \{- u\},\\
			C_0 & = \{v, u + 2a_2\}.
		\end{align*}	
		For $j = 2, \ldots, h - 1$, define
		\[B_j = \{u + 2a_i + a_{h + 3 - j} + \cdots + a_{h + 1} : i = 3, \dots, h + 2 - j\}.\] 
		Furthermore, for  $j = 1, \ldots, h - 2$, define
		\[C_j = 2(a_{h + 2 - j} + \cdots + a_{h + 1}) + C_0.\]
		Clearly, 	
		\[\max (B_i) < \min (C_i) = \max (C_i) < \min (B_{i + 1}),\]
		for $i = 0, \ldots, h - 2$, and
		\[\max (B_{h - 1}) < \min (B_h).\]
		Hence sets $B_i$ and $C_j$ are disjoint for $i = 0, \ldots, h$ and $j = 0, \ldots, h - 2$. Since the sumsets $h_{\pm}^\wedge A_1$ and $h_{\pm}^\wedge A_2$ are disjoint, it follows that $B_0 \cup \cdots \cup B_h \cup C_0 \cup \cdots \cup C_{h - 2}$ and $h_{\pm}^\wedge A_2$ are disjoint sets. Hence
		\[h_{\pm}^\wedge A \supseteq h_{\pm}^\wedge A_2 \cup B_0 \cup \cdots \cup B_h \cup C_0 \cup \cdots \cup C_{h - 2}.\]
		So, we get
		\begin{align*}
			|h_{\pm}^\wedge A| &\geq |h_{\pm}^\wedge A_2| + \sum_{j = 0}^{h} |B_j| + \sum_{j = 0}^{h - 2} |C_j|\\
			&= |h_{\pm}^\wedge A_2| + 2 + \sum_{j = 1}^{h - 1}|B_j| + \sum_{j = 0}^{h - 2} 2\\
			&= |h_{\pm}^\wedge A_2| + 2 + \sum_{j = 1}^{h}(h - j) + 2(h - 1)\\
			&= |h_{\pm}^\wedge A_2| + \frac{h(h - 1)}{2} + 2h\\
			&= |h_{\pm}^\wedge A_2| + \frac{h(h + 1)}{2} + h.
		\end{align*}
		
		\noindent {\textbf{Case 2}} ($a_3 < 2a_2$).
		Now we consider the subsets $B_0, \ldots, B_h, C_0, \ldots, C_{h - 2}$ of $h_{\pm}^\wedge A_1$ as follows: 
		Let $u = - a_2 - \cdots - a_{h + 1}$, and let $v = 0 - a_2 - a_4 - \cdots - a_{h + 1}$. Define
		\begin{align*}
			B_0 & = \{u\},\\
			B_1 & = \{u + 2a_i : i = 2, \ldots, h + 1\},\\
			C_0 & = \{v\}.
		\end{align*}	
		For $j = 2, \ldots, h$, define
		\[B_j =  \{u + 2a_i + a_{h + 3 - j} + \cdots + a_{h + 1} : i = 2, \dots, h + 2 - j\}.\] 
		Furthermore, for  $j = 1, \ldots, h - 2$, define
		\[C_j = \{v + 2(a_{h + 2 - j} + \cdots + a_{h + 1})\}.\]
		Clearly, 	
		\[\max (B_i) < \min (C_i) = \max (C_i) < \min (B_{i + 1}),\]
		for $i = 0, \ldots, h - 2$, and
		\[\max B_{h - 1} < \min B_h.\]
		Hence sets $B_i$ and $C_j$ are disjoint for $i = 0, \ldots, h$ and $j = 0, \ldots, h - 2$. Since the sumsets $h_{\pm}^\wedge A_1$ and $h_{\pm}^\wedge A_2$ are disjoint, it follows that $B_0 \cup \cdots \cup B_h \cup C_0 \cup \cdots \cup C_{h - 2}$ and $h_{\pm}^\wedge A_2$ are disjoint sets. Hence
		\[h_{\pm}^\wedge A \supseteq h_{\pm}^\wedge A_2 \cup B_0 \cup \cdots \cup B_h \cup C_0 \cup \cdots \cup C_{h - 2}.\]
		So, we get
		\begin{align*}
			|h_{\pm}^\wedge A| &\geq |h_{\pm}^\wedge A_2| + \sum_{j = 0}^{h} |B_j| + \sum_{j = 0}^{h - 2} |C_j|\\
			&= |h_{\pm}^\wedge A_2| + 1 + \sum_{j = 1}^{h}|B_j| + \sum_{j = 0}^{h - 2} 1\\
			&= |h_{\pm}^\wedge A_2| + 1 + \sum_{j = 1}^{h}(h - j + 1) + (h - 1)\\
			&= |h_{\pm}^\wedge A_2| + \frac{h(h + 1)}{2} + h.
		\end{align*}
		
		Therefore, we have 
		\begin{equation}\label{rssn-lemeq:16}
			|h_{\pm}^\wedge A| \geq |h_{\pm}^\wedge A_2| + \frac{h(h + 1)}{2} + h.
		\end{equation}

		Now we consider the following cases.
		
		\noindent {\textbf{Case 1}} ($h = 4$). Let 
		\[v = 0 - a_2 - a_4 - a_5.\]
		Then
		\[v \not \equiv x \pmod 2,\]
		where $x \in 4_{\pm}^\wedge A_2$. Hence
		\[v \notin 4_{\pm}^\wedge A_2.\]
		Clearly,
		\[- a_2 - a_3 - a_4 - a_5 < v, a_2 - a_3 - a_4 - a_5 < - a_2 + a_3 - a_4 - a_5.\]
		Since $a_3 \neq 2a_2$, it follows that
		\[v \neq a_2 - a_3 - a_4 - a_5.\]
		Hence
		\[v \notin 4_{\pm}^\wedge A_1.\]
		Therefore,
		\[v \notin 4_{\pm}^\wedge A_1 \cup 4_{\pm}^\wedge A_2.\]
		Since $4_{\pm}^\wedge A_1 \cup h_{\pm}^\wedge A_2$ is a symmetric set and $v \notin 4_{\pm}^\wedge A_1 \cup 4_{\pm}^\wedge A_2$, it follows that
		\[- v \notin 4_{\pm}^\wedge A_1 \cup 4_{\pm}^\wedge A_2.\]
		Since $- v, v \in 4_{\pm}^\wedge A \setminus (4_{\pm}^\wedge A_1 \cup 4_{\pm}^\wedge A_2)$, it follows from \eqref{rssn-lemeq:16} and Theorem \ref{thm:5} that
		\begin{align*}
			|4_{\pm}^\wedge A| &\geq |4_{\pm}^\wedge A_1 \cup 4_{\pm}^\wedge A_2| + 2\\
			&= |4_{\pm}^\wedge A_1| +  |4_{\pm}^\wedge A_2| + 2\\
			&\geq  14 + 7 + 2\\
            &=23.
		\end{align*}
This establishes the first inequality in $\eqref{rssn-lem12e1}$.
		
		\noindent {\textbf{Case 2}} ($h \geq 5$).	
		If $A_2$ is not in an arithmetic progression, then it follows from Theorem \ref{thm:6} that
		\begin{align*}
			|h_{\pm}^\wedge A| &\geq \frac{h(h - 1)}{2} + 2 + \frac{h(h + 1)}{2} + h\\
			&= h^2 + h + 2. 
		\end{align*}
This establishes the second inequality in $\eqref{rssn-lem12e1}$.
		
Now assume that $A_2$ is an arithmetic progression. Since 
		\[a_3 \equiv 0 \pmod 2,\] 
		it follows that
		\[a_3 \equiv \cdots \equiv a_{h + 1} \equiv 0 \pmod 2.\]
		Clearly, 
		\[|h_{\pm}^\wedge A| \geq |h_{\pm}^\wedge A_1| + |h_{\pm}^\wedge A_2|.\]
		Therefore, it follows from Case $1$ in the proof of Lemma \ref{rssp-lem5} and Theorem \ref{thm:5} that
		\begin{align*}
			|h_{\pm}^\wedge A| \geq (h^2 - h + 2) + \frac{h(h - 1)}{2} + 1 = \frac{3}{2}h(h - 1) + 3. 
		\end{align*}
This establishes the last inequality in $\eqref{rssn-lem12e1}$ and thus completes the proof.
	\end{proof}
	
	\begin{lemma}\label{rssn-lem13}
		Let $h \geq 3$ be an integer. Let $A = \{0, a_2, \ldots, a_{h + 1}\}$ be a set of nonnegative integers such that $0 < a_2 < \cdots < a_{h + 1}$. Furthermore, assume that
		\[a_2 \not \equiv 0 \pmod {2}, ~a_3 = 2a_2, ~ a_4 \equiv 0 \pmod 2 ~\text{and}~ a_4 \neq 4a_2.\]
		Then
		\[|h_{\pm}^\wedge A| \geq |h_{\pm}^\wedge A_1| + 2,\]
		where $A_1 = A \setminus \{0\}$. Hence
		\[|h_{\pm}^\wedge A| \geq h^2 + 2h - 2.\]
	\end{lemma}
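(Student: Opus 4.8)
The plan is to use the presence of $0=a_1$ in $A$ via the standard splitting. Write $A_1=A\setminus\{0\}=\{a_2,\dots,a_{h+1}\}$, a set of $h$ positive integers, and put $\sigma=a_2+\cdots+a_{h+1}$. Since $\lambda_1\in\{-1,0,1\}$ and $a_1=0$, an element $\sum_{i=1}^{h+1}\lambda_i a_i$ of $h_{\pm}^\wedge A$ equals $\sum_{i=2}^{h+1}\lambda_i a_i$ with $\sum_{i=2}^{h+1}|\lambda_i|\in\{h-1,h\}$, so that
\[
h_{\pm}^\wedge A = h_{\pm}^\wedge A_1 \cup (h-1)_{\pm}^\wedge A_1 .
\]
Both sets on the right are symmetric and contained in $[-\sigma,\sigma]$. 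For the first inequality, observe that $x:=a_3+\cdots+a_{h+1}=\sigma-a_2$ is the largest element of $(h-1)_{\pm}^\wedge(A_1\setminus\{a_2\})$, hence $\pm x\in(h-1)_{\pm}^\wedge A_1$; on the other hand every element of $h_{\pm}^\wedge A_1$ is $\equiv\sigma\pmod 2$, and since $a_2$ is odd we get $\pm x\equiv\sigma-1\not\equiv\sigma\pmod 2$, so $\pm x\notin h_{\pm}^\wedge A_1$. As $x>0$, the elements $x$ and $-x$ are distinct, and we conclude $|h_{\pm}^\wedge A|\ge|h_{\pm}^\wedge A_1|+2$. (Only the oddness of $a_2$ is needed for this part; the remaining hypotheses feed into the numerical bound below.)

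For the numerical estimate I would argue symmetrically with the roles reversed: $\pm\sigma\in h_{\pm}^\wedge A_1$, whereas $\max\left((h-1)_{\pm}^\wedge A_1\right)=\sigma-a_2<\sigma$, so $\pm\sigma\notin(h-1)_{\pm}^\wedge A_1$ and therefore $|h_{\pm}^\wedge A|\ge|(h-1)_{\pm}^\wedge A_1|+2$. It thus suffices to prove $|(h-1)_{\pm}^\wedge A_1|\ge h^2+2h-4$. For $h\ge 4$ I would apply Lemma~\ref{rssp-lem4} with $h$ replaced by $h-1$ to the set $A_1$, which has $(h-1)+1$ positive elements whose three smallest are $a_2<a_3<a_4$. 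By hypothesis $a_2$ is odd, $a_3=2a_2$ is even and $a_4$ is even, so $a_2\not\equiv a_3$ and $a_4\not\equiv a_2$ modulo $2$; moreover $a_4\ne 4a_2=2a_2+a_3$, so we are in the second case of that lemma, which gives
\[
\left|(h-1)_{\pm}^\wedge A_1\right|\ge (h-1)^2+4(h-1)-1 = h^2+2h-4,
\]
and hence $|h_{\pm}^\wedge A|\ge h^2+2h-2$. For $h=3$ the parameter of Lemma~\ref{rssp-lem4} would be $2$, so it does not apply; instead one computes $2_{\pm}^\wedge A_1 = 2_{\pm}^\wedge\{a_2,2a_2\}\cup 2_{\pm}^\wedge\{a_2,a_4\}\cup 2_{\pm}^\wedge\{2a_2,a_4\}$ directly. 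Each of the three two-element signed sumsets has exactly $4$ elements, and the conditions $2a_2<a_4$, $a_4\ne 4a_2$ and $a_2$ odd rule out every coincidence among the three, so $|2_{\pm}^\wedge A_1|=12\ge 11 = h^2+2h-4$, again yielding $|h_{\pm}^\wedge A|\ge 13$.

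The only real content is the two ``missing extreme element'' observations of the first two paragraphs (which rest on the parity of $a_2$ together with the obvious size comparisons) and the bookkeeping needed to cast $A_1$ in the exact form required by Lemma~\ref{rssp-lem4}. I expect the case $h=3$ to be the main nuisance, since it is not covered by Lemma~\ref{rssp-lem4} and the pairwise disjointness of the three two-term signed sumsets must be verified by hand.
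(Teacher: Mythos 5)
Your proof is correct and follows essentially the same route as the paper: the key step is the inequality $|h_{\pm}^\wedge A|\ge|(h-1)_{\pm}^\wedge A_1|+2$ (the paper's \eqref{rssn-lemeq:3}, obtained exactly as in your second paragraph) followed by Lemma~\ref{rssp-lem4} applied with parameter $h-1$ to $A_1$, whose three smallest elements $a_2<a_3=2a_2<a_4$ satisfy the parity hypotheses and fall into the second case because $a_4\ne 2a_2+a_3=4a_2$. You are in fact more careful than the paper on two points: you give a parity argument for the literally stated inequality $|h_{\pm}^\wedge A|\ge|h_{\pm}^\wedge A_1|+2$ (the paper's proof only establishes the $(h-1)$-version), and you correctly note that Lemma~\ref{rssp-lem4} requires its parameter to be at least $3$, so $h=3$ needs the separate direct verification you supply --- a gap the paper's proof passes over silently.
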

	
	\begin{proof}
		It follows from \eqref{rssn-lemeq:3} that
		\[|h_{\pm}^\wedge A| \geq |(h - 1)_{\pm}^\wedge A_1| + 2.\]
		By applying Lemma \ref{rssp-lem4}, we get
		\begin{align*}
			|h_{\pm}^\wedge A| \geq (h - 1)^2 + 4(h - 1) - 1 + 2 = h^2 + 2h - 2. 
		\end{align*}
		This completes the proof.	 
	\end{proof}
	
	\begin{lemma}\label{rssn-lem14}
		Let $h \geq 4$ be an integer. Let $A = \{0, a_2, \ldots, a_{h + 1}\}$ be a set of nonnegative integers such that $0 < a_2 < \cdots < a_{h + 1}$. Furthermore, assume that
		\[a_2 \not \equiv 0 \pmod {2}, ~a_3 = 2a_2, ~\text{and}~ a_4 = 4a_2.\]
		Then the following conditions hold.
		\begin{enumerate}
			\item  If $h = 4$ and $A = a_2 \ast \{0, 1, 2, 4, 6\}$, then
			\[|4_{\pm}^\wedge A| = 21.\]
			\item If $h = 4$ and $A \neq a_2 \ast \{0, 1, 2, 4, 6\}$, then
			\[|4_{\pm}^\wedge A| \geq 23.\]
			\item  If $h \geq 5$, then
			\begin{equation*}
				|h_{\pm}^\wedge A| \geq 
				\begin{cases}
					\frac{3}{2} h(h - 1) + 3, &\text{if $A = a_2 \ast \{0, 1, 2j: j = 1, \ldots, h - 1\}$};\\
					h^2 + h + 2, &\text{otherwise}.  
				\end{cases}
			\end{equation*} 
		\end{enumerate}
		
	\end{lemma}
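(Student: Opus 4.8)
The plan is as follows. Since $h^{\wedge}_{\pm}(c \ast A) = c \ast (h^{\wedge}_{\pm}A)$ for every nonzero integer $c$, I would first rescale so that $a_2 = 1$; thus $A = \{0, 1, 2, 4, a_5, \ldots, a_{h+1}\}$ with $4 < a_5 < \cdots < a_{h+1}$. Two tools from the preliminaries are used repeatedly: with $A_1 = A \setminus \{0\}$ one has $h^{\wedge}_{\pm}A = h^{\wedge}_{\pm}A_1 \cup (h-1)^{\wedge}_{\pm}A_1$; and for an $m$-element set $C$, $m^{\wedge}_{\pm}C$ is a translate of $2 \ast \Sigma(C)$, so $|m^{\wedge}_{\pm}C| = |\Sigma(C)|$. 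Since $\Sigma(\{1,2,4\}) = [0, 7]$, this gives $|h^{\wedge}_{\pm}A_1| = |[0,7] + \Sigma(\{a_5, \ldots, a_{h+1}\})|$.

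For $h = 4$ (parts (1) and (2)) I would compute directly. Here $A = \{0, 1, 2, 4, a_5\}$ with $a_5 \geq 5$, and $4^{\wedge}_{\pm}A$ is the union, over the five $4$-element subsets $A \setminus \{a_j\}$, of their $\pm$-sum sets; these are evaluated from $\{\pm 1 \pm 2 \pm 4\} = \{\pm1, \pm3, \pm5, \pm7\}$, $\{\pm1\pm2\} = \{\pm1, \pm3\}$, $\{\pm1\pm4\} = \{\pm3,\pm5\}$, $\{\pm2\pm4\} = \{\pm2,\pm6\}$, and their mutual overlaps depend only on the position of $a_5$ relative to $[-7,7]$, so it suffices to treat $a_5 \in \{5, 6, 7\}$ and $a_5 \geq 8$. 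One obtains $|4^{\wedge}_{\pm}A| = 21$ exactly when $a_5 = 6$ (the sumset splitting by parity into a $7$-element even part, contributed by deleting the unique odd element $1$, and a $14$-element odd part), $|4^{\wedge}_{\pm}A| = 23$ when $a_5 = 5$, and $|4^{\wedge}_{\pm}A| > 23$ when $a_5 \geq 7$.

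For $h \geq 5$ (part (3)), set $A_2 = A \setminus \{1\} = \{0, 2, 4, a_5, \ldots, a_{h+1}\}$. Since $0 \not\equiv 1 \pmod 2$, the sumsets $h^{\wedge}_{\pm}A_1$ and $h^{\wedge}_{\pm}A_2$ are disjoint, so $|h^{\wedge}_{\pm}A| \geq |\Sigma(A_1)| + |\Sigma(A_2)|$; and from $[0,7] = \{0,2,4,6\} \cup (\{0,2,4,6\}+1)$ we get $\Sigma(A_1) = \Sigma(A_2) \cup (\Sigma(A_2) + 1)$, hence $|\Sigma(A_1)| = |\Sigma(A_2)| + \rho$, where $\rho$ is the number of maximal blocks of consecutive integers in $\Sigma(A_2)$; thus $|h^{\wedge}_{\pm}A| \geq 2|\Sigma(A_2)| + \rho$. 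If $A = a_2 \ast \{0,1,2,4,6,\ldots,2(h-1)\}$, then $A_2 = 2 \ast [0, h-1]$ is an arithmetic progression, so $|h^{\wedge}_{\pm}A_2| = \tfrac{h(h-1)}{2} + 1$ by Theorem \ref{thm:5}; and $A_1 = \{1\} \cup 2\ast[1,h-1]$ gives $h^{\wedge}_{\pm}A_1 = \{-1, 1\} + 2 \ast (h-1)^{\wedge}_{\pm}[1, h-1]$, a disjoint union of two arithmetic progressions of common difference $4$ and length $\tfrac{h(h-1)}{2} + 1$, so $|h^{\wedge}_{\pm}A_1| = h(h-1) + 2$; adding gives $|h^{\wedge}_{\pm}A| \geq \tfrac{3}{2}h(h-1) + 3$. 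If $A$ is not this set, then $A_2$ is not an arithmetic progression (its first three terms $0, 2, 4$ would force it to be $2\ast[0,h-1]$), so Theorem \ref{thm:6} gives $|\Sigma(A_2)| \geq \tfrac{h(h-1)}{2} + 2$; when every $a_j$ with $j \geq 5$ is even, $\Sigma(A_2)$ consists of even integers only, hence $\rho = |\Sigma(A_2)|$ and $|h^{\wedge}_{\pm}A| \geq 3|\Sigma(A_2)| \geq \tfrac{3}{2}h(h-1) + 6 \geq h^2 + h + 2$; when $\Sigma(A_2)$ is the whole interval $[0, M]$, then $\rho = 1$ and $M = 6 + \sum_{j \geq 5}a_j \geq \tfrac{h^2 + 3h - 6}{2}$, so $|h^{\wedge}_{\pm}A| \geq 2M + 3 \geq h^2 + 3h - 3 \geq h^2 + h + 2$; in the remaining configurations I would instead use $|h^{\wedge}_{\pm}A| \geq |(h-1)^{\wedge}_{\pm}A_1| + 2$ from \eqref{rssn-lemeq:3}, Lemma \ref{rssp-lem4} applied to $A_1$ (its hypothesis $a_3 = 2a_1 + a_2$ holding because $4 = 2 \cdot 1 + 2$), and Theorem \ref{thm:4} applied to the $(h-1)$-element set $\{2, 4, a_5, \ldots, a_{h+1}\}$, which in this subcase is not $2 \ast [1, h-1]$.

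The delicate point is precisely that last family. For configurations with $a_5 = 6$ and some higher term deviating from $6, 8, \ldots$, the parity-disjoint estimate using only $|\Sigma(A_2)| = \tfrac{h(h-1)}{2} + 2$ falls short of $h^2 + h + 2$ by an additive $\Theta(h)$, while the reduction through \eqref{rssn-lemeq:3} and Lemma \ref{rssp-lem4}, applied with the generic bound for $\{2,4,a_5,\ldots,a_{h+1}\}$, yields only $h^2 + h + 1$, short by exactly $1$. Recovering this last unit requires a sharper lower bound for the block count $\rho$ of $\Sigma(A_2) = \{0,2,4,6\} + \Sigma(\{a_5,\ldots,a_{h+1}\})$ --- equivalently, a better handle on the overlap $h^{\wedge}_{\pm}A_1 \cap (h-1)^{\wedge}_{\pm}A_1$ --- and I expect this to need a finite sub-division according to how far $\{2,4,a_5,\ldots,a_{h+1}\}$ lies from an arithmetic progression, possibly appealing to Lemmas \ref{rssn-lem11}--\ref{rssn-lem13}. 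The exact value $21$ in part (1) and the genuine exception $A = a_2\ast\{0,1,2,4,6\}$ are exactly the obstructions forcing the case split in the first place.
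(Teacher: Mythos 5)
Your framework for part (3) --- the disjointness of $h_{\pm}^{\wedge}A_1$ and $h_{\pm}^{\wedge}A_2$, the reduction $|h_{\pm}^{\wedge}A_i| = |\Sigma(A_i)|$, and the identity $\Sigma(A_1) = \Sigma(A_2) \cup (a_2 + \Sigma(A_2))$ --- is essentially the one the paper uses, and your treatment of the arithmetic-progression case and of parts (1)--(2) (a finite check over $a_5 \in \{5a_2, 6a_2, 7a_2\}$ and $a_5 \geq 8a_2$, which does stabilize) is sound. But there is a genuine gap exactly where you flag it: when $A_2$ is not an arithmetic progression, $\{a_5, \ldots, a_{h+1}\}$ is not all even (so $\Sigma(A_2)$ is not purely even and your block count $\rho$ is far below $|\Sigma(A_2)|$), and $\Sigma(A_2)$ is not a full interval, neither of your two estimates reaches $h^2+h+2$: the parity bound $2|\Sigma(A_2)| + \rho$ with only $|\Sigma(A_2)| \geq \frac{h(h-1)}{2}+2$ and $\rho \geq 1$ misses by $\Theta(h)$, and the route through \eqref{rssn-lemeq:3}, Lemma \ref{rssp-lem4} and Theorem \ref{thm:4} gives $h^2+h+1$, one short. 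You state that recovering this last unit ``requires a sharper lower bound for $\rho$'' and that you ``expect this to need a finite sub-division,'' but you do not carry it out; since this residual family (for instance $a_5 = 5a_2$ with a later gap, or $a_5 = 6a_2$ with a higher term off the even progression) is precisely the hard content of the lemma, part (3) is not proved.

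For comparison, the paper closes this case not by improving $\rho$ but by exhibiting explicit extra subset sums lying outside the standard chains $B_{j,1}$ and $B_{j,2}$: if $a_5 \neq a_4+a_2$, the element $a_{h+1}+\cdots+a_6+a_4+a_2 \in \Sigma(A_1)$ is new and supplies the missing unit; if $a_5 = a_4+a_2$ and $a_6 = a_5+a_2$, one adjoins $a_{h+1}+\cdots+a_7+a_5+a_3$ and $a_{h+1}+\cdots+a_6+a_4+a_3$ to the count in $\Sigma(A_2)$ (with the single set $a_2\ast\{0,1,2,4,5,6\}$ at $h=5$ checked directly); if $a_5 = a_4+a_2$ and $a_6 \neq a_5+a_2$, one adjoins $a_{h+1}+\cdots+a_7+a_5+a_2$ to $\Sigma(A_1)$. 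Each subcase yields exactly the needed surplus. Some such explicit bookkeeping of additional subset sums, or an equivalent sharpening of your lower bound for $\rho$, is unavoidable here, and without it the ``otherwise'' branch of part (3) remains open.
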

	
	\begin{proof}
		Let $A_1 = A \setminus \{0\}$, and let $A_2 = A \setminus \{a_2\}$. Then 
		\[h_{\pm}^\wedge A_1 \cup h_{\pm}^\wedge A_2 \subseteq h_{\pm}^\wedge A.\]
		Clearly,
		\[h_{\pm}^\wedge A_1 = - (a_2 + a_3 + \cdots + a_{h + 1}) + 2 \ast \Sigma (A_1),\]
		and
		\[h_{\pm}^\wedge A_2 = - (0 + a_3 + \cdots + a_{h + 1}) + 2 \ast \Sigma (A_2).\]
		Hence,
		\[|h_{\pm}^\wedge A_1| = |\Sigma (A_1)| ~\text{and}~ |h_{\pm}^\wedge A_2| = |\Sigma (A_2)|.\]
		Now we consider the subsets $B_{1, 1}, \ldots, B_{h, 1}$ of $\Sigma (A_1)$ as follows:
		\[B_{1, 1} = \{0, a_i : i = 2, \ldots , h + 1\}.\]
		For $j = 2, \ldots, h$, we define
		\[B_{j, 1} = \{a_i : i = 2, \ldots, h + 2 - j\} + a_{h + 3 - j} + \cdots + a_{h + 1}.\]
		Next, we consider the subsets $B_{1, 2}, \ldots, B_{h - 1, 2}$ of $\Sigma (A_2)$ as follows:
		\[B_{1, 2} = \{0, a_i : i = 3, \ldots , h + 1\}.\]
		For $j = 2, \ldots, h - 1$, we define
		\[B_{j, 2} = \{a_i : i = 3, \ldots, h + 2 - j\} + a_{h + 3 - j} + \cdots + a_{h + 1}.\]
		Clearly,
		\[\max (B_{h - 1, 1}) < \min (B_{h , 1}).\]
		and
		\[\max (B_{j, i}) < \min (B_{j + 1, i}),\]
		for $j = 1, \ldots , h - 1$, and $i = 1, 2$. It follows that sets $B_{1, 1}, \ldots, B_{h, 1}$ are disjoint subsets of $\Sigma (A_1)$, and $B_{1, 2}, \ldots, B_{h - 1, 2}$ are disjoint subsets of $\Sigma (A_2)$. Now we consider the following case.
		
		\noindent {\textbf{Case 1}} ($h = 4$).
		Clearly, 
		\[a_3 < a_2 + a_3 = 3a_2 < a_4,\]
		and
		\[a_5 + a_3 < a_5 + a_3 + a_2 < a_5 + a_4.\]
		Now we define
		\[B_{5, 1} = \{ a_2 + a_3,  a_5 + a_3 + a_2\}.\]
		Thus we have $B_{1, 1}, \ldots, B_{5, 1}$ are disjoint subsets of $\Sigma (A_1)$. Observe the following:
		\begin{enumerate}
			\item If $a_5 = 6a_2$, then $A = a_1 \ast \{0, 1, 2, 4, 6\}$, and so
			\[|4_{\pm}^\wedge A| = 21.\]
			\item If $a_5 = 5a_2$, then $A = a_1 \ast \{0, 1, 2, 4, 5\}$, and so
			\[|4_{\pm}^\wedge A| = 23.\]
			\item If $a_5 \neq 5a_2, 6a_2$, then
			\[a_4 < a_5 \neq a_4 + a_2, a_4 + a_3 \neq a_5 + a_2 < a_5 + a_3 ~\text{in}~ \Sigma (A_1),\]
			and
			\[a_4 < a_5 \neq a_4 + a_3 < a_5 + a_3 ~\text{in}~ \Sigma (A_2).\]
\end{enumerate}
Thus we get $2$ extra element in $\Sigma (A_1)$ distinct from the elemets of $B_{1, 1}, \ldots, B_{5, 1}$ and $1$ extra element in $\Sigma (A_2)$ distinct from the elements of $B_{1, 2}, B_{2, 2}, B_{3, 2}$. Hence
			\begin{align*}
				|4_{\pm}^\wedge A| &\geq \sum_{j = 1}^{5} |B_{j, 1}| + |\{a_4 + a_2, a_4 + a_3\}| + \sum_{j = 1}^{3} |B_{j, 2}| + |\{a_4 + a_3\}|\\
				&= (5 + 3 + 2 + 1 + 2) + 2 + (4 + 2 + 1) + 1\\
				&= 23. 
			\end{align*}
			Thus
			\begin{equation*}
				|4_{\pm}^\wedge A| \geq 
				\begin{cases}
					21, &\text{if $A = a_2 \ast \{0, 1, 2, 4, 6\}$};\\
					23, &\text{otherwise.} 
				\end{cases}
			\end{equation*}
This proves parts $(1)$ and $(2)$ of the lemma.		
		
		\noindent {\textbf{Case 2}} $(h \geq 5)$.
		Now we consider the following subcase.
		
		\noindent {\textbf{Subcase 1:}}	($A_2$ is an arithmetic progression).
		In this case, we have
		\[A = a_2 \ast \{0, 1, 2, 4, \ldots, 2h - 2\}.\]
		In this case, it is easy to see that
		\[\Sigma (A_1) = a_2 \ast [0, h^2 - h + 1],\]
		and so
		\begin{align*}
			|h_{\pm}^\wedge A_1| = |\Sigma (A_1)|= h^2 - h + 2. 
		\end{align*}
		Therefore, we have
		\begin{align*}
			|h_{\pm}^\wedge A| &\geq |h_{\pm}^\wedge A_1| + |h_{\pm}^\wedge A_2|\\
			&= |\Sigma (A_1)| + |\Sigma (A_2)|\\
			&\geq (h^2 - h + 2) + \sum_{j = 1}^{h - 1} |B_{j, 2}|\\
			&= (h^2 - h + 2) + h + \sum_{j = 2}^{h - 1} |B_{j, 2}|\\
			&= h^2  + 2 + \sum_{j = 2}^{h - 1}(h - j)\\
			&= \frac{3}{2}h(h - 1) + 3.
		\end{align*}
		
		\noindent {\textbf{Subcase 2}}	($A_2$ is not an arithmetic progression). Suppose that $A_2$ is not in an arithmetic progression. Observe the following:
		\begin{enumerate}
			\item Since $h \geq 5$ and $A_2$ is not in an arithmetic progression, it follows that
			\[|h_{\pm}^\wedge A_2| \geq \frac{h(h - 1)}{2} + 2.\]
			\item It easy to see that
			\[a_{h + 1} + \cdots + a_j + a_3 < a_{h + 1} + \cdots + a_j + a_3 + a_2 < a_{h + 1} + \cdots + a_j + a_4,\]
			for $j = 5 \ldots , h + 1$.	Now we define
			\[B_{h + 1, 1} = \{a_2+ a_3, a_{h + 1} + \cdots + a_j + a_3 + a_2 : j = 5 \ldots, h + 1\}.\] 
			Thus we have $B_{1, 1}, \ldots, B_{h, 1}$ and $B_{h + 1, 1}$ are disjoint subsets of $\Sigma (A_1)$.
			\item  if $a_5 \neq a_4 + a_2$, then we have
\begin{align*}
a_{h + 1} + \cdots + a_6 + a_4 < a_{h + 1} + \cdots + a_6 + a_4 + a_2 &\neq a_{h + 1} + \cdots + a_5 \\
& < a_{h + 1} + \cdots + a_5 + a_2.
\end{align*}
Thus $a_{h + 1} + \cdots + a_6 + a_4 + a_2$ is an element of $\Sigma (A_1)$ but it is different element form the elements of $B_{1, 1} \cup \cdots \cup B_{h + 1, 1}$. Therefore, we have
			\begin{align*}
				|h_{\pm}^\wedge A| &\geq |h_{\pm}^\wedge (A_1)| + |h_{\pm}^\wedge A_2|\\
				&\geq |\Sigma (A_1)| + \frac{h(h - 1)}{2} + 2\\
				&\geq \sum_{j = 1}^{h + 1} |B_{(j , 1)}| + 1 + \frac{h(h - 1)}{2} + 2\\
				&= (h + 1) + \sum_{j = 2}^{h} (h + 1 - j) + (h - 2)  + \frac{h(h - 1)}{2} + h + 3\\
				&= (h + 1) +  \frac{h(h - 1)}{2} + \frac{h(h - 1)}{2} + h + 1\\
				&= h^2 + h + 2.
			\end{align*}
			\item Suppose that $a_5 = a_4 + a_2$ and $a_6 = a_5 + a_2$. If $h = 5$, then we have
			\[A = a_2 \ast \{0, 1, 2, 4, 5, 6\}.\]
			It is easy to see that
			\[|h_{\pm}^\wedge \{0, 1, 2, 4, 5, 6\}| \geq 32\]
			Since $|h_{\pm}^\wedge A| = |h_{\pm}^\wedge \{0, 1, 2, 4, 5, 6\}|$ and $|h_{\pm}^\wedge \{0, 1, 2, 4, 5, 6\}| \geq 32$, it follows that
			\[|h_{\pm}^\wedge A| \geq 32 = h^2 + h + 2.\]
			Now we assume that $h \geq 6$. Let 
			\[y = a_{h + 1} + \cdots + a_7.\]
			Then
			\[y < y + a_6 < y + a_5 + a_3 < y + a_6 + a_3,\]
			and 
			\[y + a_6 + a_4 < y + a_6 + a_5 < y + a_6 + a_4 + a_3 < y + a_6 + a_5 + a_3,\]
			We define
			\[B_{h, 2} = \{a_{h + 1} + \cdots + a_7 + a_5 + a_3, a_{h + 1} + \cdots + a_6 + a_4 + a_3\}.\] 
			It is easy to verify that
			\[(B_{1, 2} \cup \cdots \cup B_{h - 1, 2}) \cap B_{h, 2} = \emptyset\]
			Thus we have $B_{1, 2}, \ldots, B_{h - 1, 2}$ and $B_{h, 2}$ are disjoint subsets of $\Sigma (A_2)$. Therefore, we have
			\begin{align*}
				|h_{\pm}^\wedge A| &\geq |h_{\pm}^\wedge A_1| + |h_{\pm}^\wedge A_2|\\
				&= |\Sigma (A_1)| + |\Sigma (A_2)|\\
				&\geq \sum_{j = 1}^{h + 1} |B_{j, 1}| + \sum_{j = 1}^{h} |B_{j, 2}|\\
				&= (h + 1) + \sum_{j = 2}^{h} |B_{j, 1}|+ (h - 2) + h + \sum_{j = 2}^{h - 1} |B_{j, 2}| + 2\\
				&= \sum_{j = 2}^{h} (h + 1 - j)+ \sum_{j = 2}^{h - 1} (h - j) + 3h + 1\\
				&= h^2 + h + 2. 
			\end{align*}
			\item Suppose that $a_5 = a_4 + a_2$ and $a_6 \neq a_5 + a_2$. If $h = 5$, then it is easy to see that 
			\[a_5 < a_5 + a_2 \neq a_6 < a_6 + a_2,\]
			and
			\[a_5 + a_2 \in \Sigma (A_1).\]
			Clearly 
			\[a_5 + a_2 \notin B_{1, 1} \cup \cdots \cup B_{h + 1, 1}.\]
			 If $h \geq 6$, then we have
			\[y < y + a_5 + a_2 \neq y + a_6 < y + a_6 + a_2,\]
			and 
			\[y + a_5 + a_2 \in \Sigma (A_1),\]
			where $y = a_{h + 1} + \cdots + a_7$. Clearly, 
			\[y + a_5 + a_2 \notin B_{1, 1} \cup \cdots \cup B_{h + 1, 1}.\]
			So, we get one extra element in $\Sigma (A_1)$, which is different element form the elements of $B_{1, 1} \cup \cdots \cup B_{h + 1, 1}$. Therefore, we have
			\begin{align*}
				|h_{\pm}^\wedge A| &\geq |h_{\pm}^\wedge A_1| + |h_{\pm}^\wedge A_2|\\
				&\geq |\Sigma (A_1)| + \frac{h(h - 1)}{2} + 2\\
				&\geq \sum_{j = 1}^{h + 1} |B_{j , 1}| + 1 + \frac{h(h - 1)}{2} + 2\\
				&= (h + 1) + \sum_{j = 2}^{h} (h + 1 - j) + (h - 2)  + \frac{h(h - 1)}{2} + h + 3\\
				&= h^2 + h + 2.
			\end{align*}	
		\end{enumerate}
		From the above observations, it follows that if $A_2$ is not an arithmetic progression, then
		\begin{align*}
			|h_{\pm}^\wedge A| &\geq h^2 + h + 2.
		\end{align*}		
		Therefore, we have
		\begin{equation*}
			|h_{\pm}^\wedge A| \geq 
			\begin{cases}
				\frac{3}{2} h(h - 1) + 3, &\text{if $A = a_2 \ast \{0, 1, 2, 4, \ldots, 2h - 2\}$};\\
				h^2 + h + 2, &\text{otherwise}.  
			\end{cases}
		\end{equation*} 		
		This completes the proof. 	
	\end{proof}
	
	\begin{lemma}\label{rssn-lem20}
		Let $h \geq 4$ be an integer. Let $A = \{0, a_2, \ldots, a_{h + 1}\}$ be a set of nonnegative integers such that $0 < a_2 < \cdots < a_{h + 1}$. Furthermore, assume that
		\[a_2 \not \equiv 0 \pmod {2}, ~a_3 = 2a_2, ~a_4 \not \equiv 0 \pmod {2} ~\text{and}~ a_4 \neq 3a_2.\]
		Then
		\begin{equation*}
			|h_{\pm}^\wedge A| \geq 
			\begin{cases}
				h^2 + h + 2, &\text{if $3a_2 < a_4$};\\
				h^2 + 2h - 2, &\text{if $a_4 < 3a_2$.}  
			\end{cases}
		\end{equation*} 
		Hence	
		\[|h_{\pm}^\wedge A| \geq h^2 + h + 2.\]
	\end{lemma}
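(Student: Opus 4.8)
The plan is to split the argument according to where $a_4$ lies relative to $3a_2$. Since $a_4>a_3=2a_2$ and $a_4\neq 3a_2$, exactly one of $3a_2<a_4$ and $2a_2<a_4<3a_2$ holds; I would prove $|h_{\pm}^\wedge A|\geq h^2+h+2$ in the first case and $|h_{\pm}^\wedge A|\geq h^2+2h-2$ in the second, after which the final inequality $|h_{\pm}^\wedge A|\geq h^2+h+2$ is immediate, since $h^2+2h-2\geq h^2+h+2$ for every $h\geq 4$. Throughout I would work with $A_1=A\setminus\{0\}=\{a_2,2a_2,a_4,\ldots,a_{h+1}\}$, $A_2=A\setminus\{a_2\}=\{0,2a_2,a_4,\ldots,a_{h+1}\}$ and $A_3=A\setminus\{a_3\}=\{0,a_2,a_4,\ldots,a_{h+1}\}$, noting that $h_{\pm}^\wedge A_1,h_{\pm}^\wedge A_2,h_{\pm}^\wedge A_3\subseteq h_{\pm}^\wedge A$ and that, since $a_1=0$ and $a_2$ have opposite parity ($a_2$ being odd), the first fact recalled in Section \ref{section-rssn-aux-lemma} gives that $h_{\pm}^\wedge A_1$ and $h_{\pm}^\wedge A_2$ are disjoint.

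In the case $3a_2<a_4$ one has $a_2<a_3=2a_2<3a_2<a_4<\cdots<a_{h+1}$. Here I would construct, in the style of the proofs of Lemmas \ref{rssn-lem9}, \ref{rssn-lem11} and \ref{rssn-lem12}, a family of pairwise disjoint blocks $B_0,\ldots,B_h,C_0,\ldots,C_{h-1}$ of elements of $h_{\pm}^\wedge A$ in each of which the coefficient of $a_3$ is nonzero --- so that none of them lies in $h_{\pm}^\wedge A_3$ --- with $|B_0|=1$, $|B_j|=h+1-j$ for $1\leq j\leq h$ and $|C_j|=1$ for $0\leq j\leq h-1$, the disjointness being forced as usual by interlacing inequalities $\max B_i<\min C_i=\max C_i<\min B_{i+1}$. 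Since these blocks together contain $1+\tfrac{h(h+1)}{2}+h$ elements, this gives $|h_{\pm}^\wedge A|\geq|h_{\pm}^\wedge A_3|+\tfrac{h(h+1)}{2}+h+1$, and combining with $|h_{\pm}^\wedge A_3|\geq\tfrac{h(h-1)}{2}+1$ from Theorem \ref{thm:5} yields $|h_{\pm}^\wedge A|\geq h^2+h+2$.

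In the case $2a_2<a_4<3a_2$, equivalently $a_3<a_4<a_2+a_3$, the target is the stronger bound $h^2+2h-2$, and the natural route is the reduction \eqref{rssn-lemeq:3}, which gives $|h_{\pm}^\wedge A|\geq|(h-1)_{\pm}^\wedge A_1|+2$; it therefore suffices to prove $|(h-1)_{\pm}^\wedge A_1|\geq h^2+2h-4=(h-1)^2+4(h-1)-1$. Taking the elements $a_1,a_2,a_3$ of Lemma \ref{rssp-lem4} (with its $h$ replaced by $h-1$) to be $a_2,2a_2,a_4$ here, the condition $a_3\neq 2a_1+a_2$ reads $a_4\neq 4a_2$, which holds automatically since $a_4$ is odd, so $(h-1)^2+4(h-1)-1$ is exactly the bound that lemma would supply --- except that its parity hypothesis $a_3\not\equiv a_1\pmod{2}$, that is $a_4\not\equiv a_2\pmod{2}$, fails, $a_4$ and $a_2$ both being odd. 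I would therefore reprove this bound directly, reproducing the block construction underlying Lemma \ref{rssp-lem4} and using the inequalities $0<a_4-a_3<a_2$ and $0<a_2+a_3-a_4<a_2$ (both consequences of $a_3=2a_2$ and $a_3<a_4<a_2+a_3$) to exhibit, alongside the usual signed sums, enough further distinct elements to make up for the absence of the parity hypothesis; here the assumption $a_4\neq 3a_2$ is needed precisely to rule out the coincidence $a_4=a_2+a_3$, which would merge two of these families. Recovering the two extreme sums $\pm(a_2+\cdots+a_{h+1})$ through \eqref{rssn-lemeq:3} then gives $|h_{\pm}^\wedge A|\geq h^2+2h-2$, and in both cases $|h_{\pm}^\wedge A|\geq h^2+h+2$.

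I expect the second case to be the main obstacle: with the parity hypothesis of Lemma \ref{rssp-lem4} unavailable, one cannot simply quote that lemma but must re-run its argument explicitly and track carefully which of the newly produced signed sums are genuinely new --- a count that is delicate because it depends both on the precise position of $a_4$ in the interval $(2a_2,3a_2)$ and on the parities of $a_5,\ldots,a_{h+1}$. The first case, by contrast, is a routine block count combined with the elementary bound of Theorem \ref{thm:5}.
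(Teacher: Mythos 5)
Your case split and the closing observation that $h^2+2h-2 \geq h^2+h+2$ for $h \geq 4$ match the paper, but both branches of your argument have real gaps. In the case $3a_2 < a_4$ you want blocks of $h_{\pm}^\wedge A$ that are disjoint from $h_{\pm}^\wedge A_3$ \emph{because} each has a representation with nonzero coefficient of $a_3$; that inference is invalid, since an element of $h_{\pm}^\wedge A$ may have several representations, and the parity mechanism that powers Lemma \ref{rssn-lem11} is unavailable here: $a_3 = 2a_2$ and $a_1 = 0$ are both even, so $h_{\pm}^\wedge A_3$ is not parity-separated from the sumsets your blocks would naturally live in. The paper instead pairs $A_1$ with $A_2$ (parity-disjoint because $a_2$ is odd), lower-bounds $|h_{\pm}^\wedge A_1| = |\Sigma(A_1)|$ by an explicit chain of subsums of total size $\frac{h(h+1)}{2}+h-1$, invokes Theorem \ref{thm:5} for $|h_{\pm}^\wedge A_2|$, and gains the final two elements from the symmetric pair $\pm v$ with $v = a_2 - a_3 - a_5 - \cdots - a_{h+1}$, which is excluded from $h_{\pm}^\wedge A_1$ by parity (this is where the hypothesis that $a_4$ is odd enters) and from $h_{\pm}^\wedge A_2$ by the interlacing $y < v < z$ (this is where $3a_2 < a_4$ enters).

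In the case $a_4 < 3a_2$ you reduce via \eqref{rssn-lemeq:3} to the claim $|(h-1)_{\pm}^\wedge A_1| \geq (h-1)^2 + 4(h-1) - 1$, correctly note that Lemma \ref{rssp-lem4} cannot be quoted because its parity hypothesis fails ($a_2$ and $a_4$ are both odd), and then defer the entire re-proof, which you yourself identify as the main obstacle; that deferred step is the whole content of the lemma in this case, so the proposal is incomplete at its crux. The paper takes a different and more economical route: it builds an explicit interlaced family $S \subseteq h_{\pm}^\wedge A$ of $\frac{h(h+1)}{2}+h-3$ signed sums whose supports omit $a_2$ or $a_4$, generated from $x = -a_3-a_4-\cdots-a_{h+1}$ together with $u = x + a_4 - a_2$ and $v = x + a_4 + a_2$; the ordering $x < u < v < y$ needed for disjointness is exactly where $a_4 < a_2 + a_3$ is used, and since $a_2$ and $a_4$ are odd every element of $S$ has parity opposite to that of $h_{\pm}^\wedge A_1$, whence $|h_{\pm}^\wedge A| \geq |S| + |h_{\pm}^\wedge A_1| \geq \bigl(\frac{h(h+1)}{2}+h-3\bigr) + \bigl(\frac{h(h+1)}{2}+1\bigr) = h^2+2h-2$ by Theorem \ref{thm:3}. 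To salvage your route you would have to actually exhibit the $(h-1)$-fold construction for $A_1$ and verify its count; as written there is nothing concrete to check in either case.
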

	
	\begin{proof}
		Let 
		\[A_1 = A \setminus \{0\} ~\text{and}~ A_2 = A \setminus \{a_2\}.\]
		Let 
		\[x = 0 - a_3 - a_4 - \cdots - a_{h + 1},\]
		\[y = 0 + a_3 - a_4 - \cdots - a_{h + 1},\]
		and 
		\[z = 0 - a_3 + a_4 - \cdots - a_{h + 1}.\]
		Then 
		\[x < y < z.\]
		Now we consider the following case.
		
		\noindent {\textbf{Case 1}} ($a_3 < a_2 + a_3 = 3a_2 < a_4$). Now we consider the subsets $B_1, \cdots, B_h$ of $\Sigma (A_1)$ as follows:
		\begin{align*}
			B_1 &= \{0, a_2 + a_3, a_i : i = 2, \ldots , h + 1\},\\
			B_{h - 1} &= \{a_{h + 1} + \cdots + a_4 + a_2, a_{h + 1} + \cdots + a_4 + a_3\},\\
			B_h &= \{a_{h + 1} + \cdots + a_4 + a_3 + a_2\}.
		\end{align*}
		For $j = 2, \ldots, h - 2$, we define
		\[B_j = \{a_2 + a_3, a_i : i = 2, \ldots, h + 2 - j\} + a_{h + 3 - j} + \cdots + a_{h + 1}.\]
		Clearly,
		\[\max (B_{h - 1}) = a_{h + 1} + \cdots + a_4 + a_3 < a_{h + 1} + \cdots + a_4 + a_3 + a_2 = \min (B_h).\]
		and
		\[\max (B_j) < \min (B_{j + 1}),\]
		for $j = 1, \ldots , h - 1$. It follows that sets $B_1, \ldots, B_h$ are pairwise disjoint subsets of $\Sigma (A_1)$. Hence
		\begin{align*}
			|h_{\pm}^\wedge A_1| &= |\Sigma (A_1)|\\
			&\geq |B_1 \cup \cdots \cup B_h|\\
			&= |B_1| + \cdots + |B_h|\\
			&= |B_1| + \sum_{j = 2}^{h - 2}|B_j|+ |B_{h - 1}| + |B_h|\\
			&= (h + 2) + \sum_{j = 2}^{h - 2} (h + 2 - j) + 2 + 1\\
			&= \frac{h(h + 1)}{2} + h - 1.
		\end{align*}
		Let 
		\[v = 0 + a_2 - a_3 - a_5 - \cdots - a_{h + 1}.\]
		Then
		\[v \not \equiv x \pmod2,\]
		where $x \in h_{\pm}^\wedge A_1$. 
		Hence 
		\[v \notin  h_{\pm}^\wedge A_1.\]
		Clearly, the first three smallest elements of $h_{\pm}^\wedge A_2$ are $x$, $y$, $z$, respectively. Since $a_2 + a_3  = 3a_2 < a_4$, it follows that
		\[y < v < z.\]
		Hence
		\[v \notin h_{\pm}^\wedge A_2.\]
		Since $v \notin h_{\pm}^\wedge A_1$ and $v \notin h_{\pm}^\wedge A_2$, it follows that
		\[v \notin h_{\pm}^\wedge A_1 \cup h_{\pm}^\wedge A_2.\]
		Since $h_{\pm}^\wedge A_1 \cup h_{\pm}^\wedge A_2$ is a symmetric set and $v \notin h_{\pm}^\wedge A_1 \cup h_{\pm}^\wedge A_2$, it follows that 
		\[- v, v \notin h_{\pm}^\wedge A_1 \cup h_{\pm}^\wedge A_2.\]
		Since $-v, v \in h_{\pm}^\wedge A \setminus (h_{\pm}^\wedge A_1 \cup h_{\pm}^\wedge A_2)$, it follows from Theorem \ref{thm:5} that
		\begin{align*}
			|h_{\pm}^\wedge A| &\geq |h_{\pm}^\wedge A_1 \cup h_{\pm}^\wedge A_2| + 2\\
			&\geq |h_{\pm}^\wedge A_1| + |h_{\pm}^\wedge A_2| + 2\\
			&\geq \biggl(\frac{h(h + 1)}{2} + (h - 1) \biggl) + \biggl(\frac{h(h - 1)}{2} + 1 \biggl) + 2\\
			&= h^2 + h + 2.
		\end{align*}	
		
		\noindent {\textbf{Case 2}} ($a_4 < a_2 + a_3 = 3a_2$). Let
		\[u = 0 - a_2 - a_3 - a_5 - \cdots - a_{h + 1}\]
		and
		\[v = 0 + a_2 - a_3 - a_5 - \cdots - a_{h + 1}.\]	
		Then
		\[u = x + a_4 - a_2 ~\text{and}~ v = x + a_4 + a_2.\]
		Since $a_4 < a_2 + a_3 = 3a_2$, it follows that
		\[x < u < v < y.\]
		Now we consider the subsets $B_1, \ldots, B_{h - 1}$ of $h_{\pm}^\wedge A$ as follows:
		\begin{align*}
			B_1 &= x + \{0, a_4 - a_2, a_4 + a_2, 2a_i : i = 3, \ldots , h + 1\},\\
			B_{h - 1} &= \{- x\}.
		\end{align*}
		For $j = 2, \ldots, h - 2$, we define
		\[B_j = x + \{a_4 - a_2, a_4 + a_2, 2a_i : i = 3, \ldots, h + 2 - j\} + 2(a_{h + 3 - j} + \cdots + a_{h + 1}).\]
		Clearly,
		\[\max (B_j) < \min (B_{j + 1})\]
		for $j = 1, \ldots , h - 2$. It follows that sets $B_1, \ldots, B_{h - 1}$ are pairwise disjoint subsets of $h_{\pm}^\wedge A$. Let 
		\[S = B_1 \cup \cdots \cup B_{h - 1}.\]
		Then
		\begin{align*}
			|S| &= |B_1 \cup \cdots \cup B_{h - 1}|\\
			&= |B_1| + \cdots + |B_{h - 1}|\\
			&= |B_1| + \sum_{j = 2}^{h - 2}|B_j|+ |B_{h - 1}|\\
			&= (h + 2) + \sum_{j = 2}^{h - 2} (h + 2 - j) + 1\\
			&= \frac{h(h + 1)}{2} + h - 3.
		\end{align*}
		Clearly, $y \not \equiv z \pmod 2$, where $y \in S$ and $z \in h_{\pm}^\wedge A_1$. Hence the sets $S$ and $h_{\pm}^\wedge A_1$ are disjoint subsets of $h_{\pm}^\wedge A$. By applying  Theorem \ref{thm:3}, we have
		\begin{align*}
			|h_{\pm}^\wedge A| &\geq |S\cup h_{\pm}^\wedge A_1|\\
			&= |S| + |h_{\pm}^\wedge A_1|\\
			&= \biggl(\frac{h(h + 1)}{2} + h - 3\biggl) + \frac{h(h + 1)}{2} + 1\\
			&= h^2 + 2h - 2.
		\end{align*}
		Therefore,
		\begin{equation*}
			|h_{\pm}^\wedge A| \geq 
			\begin{cases}
				h^2 + h + 2, &~\text{if $3a_2 < a_4$};\\
				h^2 + 2h - 2, &~\text{if $a_4 < 3a_2$.}  
			\end{cases}
		\end{equation*} 
		Hence	
		\[|h_{\pm}^\wedge A| \geq h^2 + h + 2.\]
	\end{proof}
	
	\begin{lemma}\label{rssn-lem15}
		Let $h = 4$, and let $A = \{0, a_2, a_3, a_4, a_5\}$ be a set of nonnegative integers. Furthermore, assume that
		\[a_2 \not \equiv 0 \pmod {2}, ~a_3 = 2a_2, ~\text{and}~ a_4 = 3a_2.\]
		Then
		\begin{equation}
			|h_{\pm}^\wedge A| =
			\begin{cases}
				21, &\text{if $A = a_2 \ast \{0, 1, 2, 3, 4\}$};\\
				23, &\text{if $A = a_2 \ast \{0, 1, 2, 3, 5\}$};\\
				25, &\text{if $A = a_2 \ast \{0, 1, 2, 3, 5\}$.} 
			\end{cases}
		\end{equation} 	
		Otherwise,
		\[|h_{\pm}^\wedge A| \geq 22.\]
	\end{lemma}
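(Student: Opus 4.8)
The plan is to work with $A_1 := A\setminus\{0\} = \{a_2, 2a_2, 3a_2, a_5\}$. Since $0\in A$, one unit of the weight $h=4$ may be placed on $0$ without affecting the value of the sum, so $4_{\pm}^\wedge A = 4_{\pm}^\wedge A_1 \cup 3_{\pm}^\wedge A_1$, and by Fact $(2)$ applied to the $4$-element set $A_1$ we have $|4_{\pm}^\wedge A_1| = |\Sigma(A_1)|$. A direct subset-sum computation — the subsets of $A_1$ omitting $a_5$ contribute $a_2\ast\Sigma(\{1,2,3\}) = a_2\ast[0,6]$ — gives $\Sigma(A_1) = a_2\ast[0,6]\cup\bigl(a_5+a_2\ast[0,6]\bigr)$. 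Since $a_5>3a_2$, these two length-$7$ blocks overlap precisely when $a_5\in\{4a_2,5a_2,6a_2\}$; hence $|4_{\pm}^\wedge A_1| = 14$ for every admissible $a_5$ outside this set, while $|4_{\pm}^\wedge A_1| = 11, 12, 13$ in the three exceptional cases respectively.

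For the three exceptional values I would compute $4_{\pm}^\wedge A$ directly. After scaling by $a_2$ the sets are $[0,4]$, $\{0,1,2,3,5\}$, and $\{0,1,2,3,6\}$, and in each case $4_{\pm}^\wedge A\subseteq a_2\ast[-M,M]$, where $M$ is the sum of the four largest elements, namely $M = 10, 11, 12$. Writing out $4_{\pm}^\wedge A_1$ together with the four pieces of $3_{\pm}^\wedge A_1$ (grouped by which element of $A_1$ is dropped, via Fact $(2)$) shows that every element of $a_2\ast[-M,M]$ is attained, so $|4_{\pm}^\wedge A| = 2M+1 = 21, 23, 25$ respectively. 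This is routine bookkeeping; the one point needing care is the matching upper bound, i.e.\ checking that no element of the interval is missed.

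The case with genuine content is the remaining one. Assume $a_5\notin\{4a_2,5a_2,6a_2\}$, so $|4_{\pm}^\wedge A_1| = 14$ by the first paragraph. Set $S := \{a_2, 2a_2, a_5\}\subseteq A_1$; then $3_{\pm}^\wedge S\subseteq 3_{\pm}^\wedge A_1\subseteq 4_{\pm}^\wedge A$, and since $a_5>3a_2$ the blocks $\{0,a_2,2a_2,3a_2\}$ and $a_5+\{0,a_2,2a_2,3a_2\}$ are disjoint, so $|\Sigma(S)| = 8$ and hence $|3_{\pm}^\wedge S| = 8$ by Fact $(2)$. Now comes the key point, a parity separation: every element of $4_{\pm}^\wedge A_1$ is congruent to $a_2+2a_2+3a_2+a_5\equiv a_5 \pmod 2$, whereas every element of $3_{\pm}^\wedge S$ is congruent to $a_2+2a_2+a_5\equiv a_2+a_5 \pmod 2$, and since $a_2$ is odd these two residues differ. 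Therefore $4_{\pm}^\wedge A_1$ and $3_{\pm}^\wedge S$ are disjoint subsets of $4_{\pm}^\wedge A$, which gives $|4_{\pm}^\wedge A|\ge 14+8 = 22$. The main obstacle is not conceptual but is the need to verify $|\Sigma(A_1)| = 14$ for every admissible $a_5$ outside $\{4a_2,5a_2,6a_2\}$ — including non-multiples of $a_2$ and the values $7a_2, 8a_2, \dots$ — which, together with the three exact computations above, constitutes the technical core of the argument.
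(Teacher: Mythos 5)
Your proof is correct and follows essentially the same route as the paper's: both split $4_{\pm}^\wedge A$ by parity into a $14$-element class obtained from $|\Sigma(A_1)|$ and a disjoint $8$-element class of the opposite parity, and both dispose of the three exceptional sets $a_2\ast\{0,1,2,3,4\}$, $a_2\ast\{0,1,2,3,5\}$, $a_2\ast\{0,1,2,3,6\}$ by direct computation. The only cosmetic differences are that the paper's $8$-element witness is $4_{\pm}^\wedge(A\setminus\{a_2\})$ while yours is $3_{\pm}^\wedge\{a_2,2a_2,a_5\}$ (equivalently $4_{\pm}^\wedge(A\setminus\{a_4\})$), and that your identification of $\Sigma(A_1)$ as the union of two translated copies of $a_2\ast[0,6]$ is a tidier version of the paper's $B_{j,1}\cup C_1$ bookkeeping.
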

	
	\begin{proof}
		It is easy to see that 
		\[|4_{\pm}^\wedge \{0, 1, 2, 3, 4\}| = 21,\]
		\[|4_{\pm}^\wedge \{0, 1, 2, 3, 5\}| = 23,\]
		and 
		\[|4_{\pm}^\wedge \{0, 1, 2, 3, 6\}| = 25.\]
		Since $|h_{\pm}^\wedge (d \ast A)| = |h_{\pm}^\wedge A|$, for each nonzero integer $d$, it follows that 
		\[|4_{\pm}^\wedge (a_2 \ast \{0, 1, 2, 3, 4\})| = 21,\]
		\[|4_{\pm}^\wedge (a_2 \ast \{0, 1, 2, 3, 5\})| = 23,\]
		and 
		\[|4_{\pm}^\wedge (a_2 \ast \{0, 1, 2, 3, 6\})| = 25.\]
		Now we assume that 
		\[a_5 \notin \{4a_2, 5a_2, 6a_2\},\]
		\[C_1 = \{a_4 + a_2, a_4 + a_3, a_4 + a_3 + a_2\},\]
		and
		\[C_2 = \{5a_2\} = \{a_4 + a_3\}.\]
		Let $B_{1, 1}, \ldots, B_{4, 1}$ and $B_{1, 2}, B_{2, 2}, B_{3, 2}$ are same define as Lemma \ref{rssn-lem14}. Clearly,
		\[a_4 < \min (C_1) < \max (C_1) < a_5 + a_4 = \max (B_{2, 1}),\]
		and
		\[a_4 < \min (C_2) = \max (C_2) < a_5 + a_4 = \max (B_{2, 2}).\]
		Since $a_5 \notin \{4a_2, 5a_2, 6a_2\}$, it follows that
		\[a_5 \notin \{a_4 + a_2, a_4 + a_3\}, \]
		and
		\[a_4 + a_2 < a_5  + a_2 \neq  a_4 + a_3 < a_5 + a_3.\]
		So,
		\[C_1 \cap (B_{1, 1} \cup \cdots \cup B_{4, 1}) = \emptyset\]
		and
		\[C_2 \cap (B_{1, 2} \cup B_{2, 2} \cup B_{3, 2}) = \emptyset.\]
		Since $C_1 \subseteq \Sigma (A_1)$ and $C_2 \subseteq \Sigma (A_2)$, it follows that 
		\[|\Sigma (A_1)| \geq |B_{1, 1}| + \cdots + |B_{4, 1}| + |C_1| = 14,\]
		and 
		\[|\Sigma (A_2)| \geq |B_{1, 2}| + |B_{2, 2}| + |B_{3, 2}| + |C_2| = 8.\]
		Hence,
		\begin{align*}
			|h_{\pm}^\wedge A| \geq |\Sigma (A_1)| + |\Sigma (A_2)| = 14 + 8 = 22.
		\end{align*}
		This completes the proof.
	\end{proof}
	
We fix the following notations which will be used now onwards.
\subsection*{Notation}
		Let $h \geq 5$ be an integer. Let $A = \{a_1, a_2, \ldots, a_{h + 1}\}$ be a set of nonnegative integers such that $0 = a_1 < a_2 < \cdots < a_{h + 1}$. Furthermore, assume that
		\[a_2 \not \equiv 0 \pmod {2}.\]
		Let $A_1 = A \setminus \{0\}$ and $A_2 = A \setminus \{a_2\}$.
		\begin{enumerate}
			\item We define the subsets $B_{1, 1}, \ldots, B_{h, 1}$ of $\Sigma (A_1)$ as follows:
			\[B_{1, 1} = \{0, a_i : i = 2, \ldots , h + 1\}.\]
			For $j = 2, \ldots, h$, we define
			\[B_{j, 1} = \{a_i : i = 2, \ldots, h + 2 - j\} + a_{h + 3 - j} + \cdots + a_{h + 1}.\]
			\item We consider the subsets $B_{1, 2}, \ldots, B_{h - 1, 2}$ of $\Sigma (A_2)$ as follows:
			\[B_{1, 2} = \{0, a_i : i = 3, \ldots , h + 1\}.\]
			For $j = 2, \ldots, h - 1$, we define
			\[B_{j, 2} = \{a_i : i = 3, \ldots, h + 2 - j\} + a_{h + 3 - j} + \cdots + a_{h + 1}.\]
			\item Let $t$ be an integer such that $0 \leq t \leq h - 4$. For $0 \leq t \leq h - 5$, we denote $C_1^{(t)}$ the set of elements in $\Sigma (A_1)$ such that if $x \in C_1^{(t)}$, then we have either 
			\[a_{h - t} < x < a_{h + 1 - t}\]
			or
			\[{\max}_{-} (B_{t + 1, 1}) < x < {\max}_{-} (B_{t + 2, 1}) ~\text{with}~ x \notin B_{t + 1, 1} \cup B_{t + 2, 1}\]
			and if $x \in C_1^{(h - 4)}$, then we have either 
			\[a_{4} < x < a_{5}\]
			or
			\[{\max}_{-} (B_{h - 3, 1}) < x < \max (B_{h - 2, 1}) ~\text{with}~ x \notin B_{h - 3, 1} \cup B_{h - 2, 1}.\]
			\item Let $t$ be an integer such that $0 \leq t \leq h - 4$. For $0 \leq t \leq h - 5$, we denote $C_2^{(t)}$ the set of elements in $\Sigma (A_2)$ such that if $x \in C_2^{(t)}$, then we have either 
			\[a_{h - t} < x < a_{h + 1 - t}\]
			or
			\[{\max}_{-} (B_{t + 1, 2}) < x < {\max}_{-} (B_{t + 2, 2}) ~\text{with}~ x \notin B_{t + 1, 2} \cup B_{t + 2, 2}\]
			and if $x \in C_2^{(h - 4)}$, then we have either 
			\[a_{4} < x < a_{5}\]
			or
			\[{\max}_{-} (B_{h - 3, 2}) < x < \max (B_{h - 2, 2}) ~\text{with}~ x \notin B_{h - 3, 2} \cup B_{h - 2, 2}.\]
			\item For $t = 0, \ldots, h - 4$, let
			\[\alpha_t = |C_1^{(t)}| + |C_2^{(t)}|\]	
		\end{enumerate}
	
	Following observations will be quite useful in the proofs of forthcoming lemmas.
\begin{enumerate}
		\item $h_{\pm}^\wedge A_1 \cap h_{\pm}^\wedge A_2 = \emptyset$.
		\item Since $h_{\pm}^\wedge A_1$ and $h_{\pm}^\wedge A_2$ are disjoint subsets of $h_{\pm}^\wedge A$, it follows that
		\begin{align*}
			|h_{\pm}^\wedge A| \geq |h_{\pm}^\wedge A_1| + |h_{\pm}^\wedge A_2|= |\Sigma (A_1)| + |\Sigma (A_2)|.
		\end{align*}
		\item Clearly,
		\[\max (B_{h - 1, 1}) < \min (B_{h , 1}).\]
		and
		\[\max (B_{j, i}) < \min (B_{j + 1, i}),\]
		for $j = 1, \ldots , h - 1$, and $i = 1, 2$. It follows that sets $B_{1, 1}, \ldots, B_{h, 1}$ are disjoint subsets of $\Sigma (A_1)$, and $B_{1, 2}, \ldots, B_{h - 1, 2}$ are disjoint subsets of $\Sigma (A_2)$.
		\item  It is easy to verify that 
		\[C_1^{(t)} \cap (B_{1, 1}, \cup \cdots \cup B_{h, 1}) = \emptyset\]
		for $0 \leq t \leq h - 4$.
		\item Similarly, we have 
		\[C_2^{(t)} \cap (B_{1, 2}, \cup \cdots \cup B_{h - 1, 2}) = \emptyset\]
		for $0 \leq t \leq h - 4$.
		\item By definition of $C_1^{(t)}$, we can easily verify that 
		\[C_1^{(i)} \cap C_1^{(j)} = \emptyset,\]
		where $i, j \in [0, h - 4]$ and $i \neq j$. Therefore, sets $C_1^{(0)}, \ldots, C_1^{(h - 4)}$ are disjoint subsets of $\Sigma (A_1)$. So, $B_{i, 1}$ and $C_1^{(t)}$ are disjoint of $\Sigma (A_1)$ for $i = 1, \ldots, h$ and $t = 0, \ldots, h - 4$.
		\item By definition of $C_2^{(t)}$, we can easily verify that 
		\[C_2^{(i)} \cap C_2^{(j)} = \emptyset,\]
		where $i, j \in [0, h - 4]$ and $i \neq j$. Therefore, sets $C_2^{(0)}, \ldots, C_2^{(h - 4)}$ are disjoint subsets of $\Sigma (A_2)$. So, $B_{i, 2}$ and $C_2^{(t)}$ are disjoint of $\Sigma (A_2)$ for $i = 1, \ldots, h - 1$ and $t = 0, \ldots, h - 4$.
		\item Clearly, we have
		\[|B_{1, 1}| + \cdots + |B_{h, 1}| = \frac{h(h + 1)}{2} + 1,\]
and 
		\[|B_{1, 2}| + \cdots + |B_{h - 1, 2}| = \frac{h(h - 1)}{2} + 1.\]
		\item Since $|h_{\pm}^\wedge A| \geq |\Sigma (A_1)| + |\Sigma (A_2)|$, it follows that
\begin{align*}
			|h_{\pm}^\wedge A| &\geq |\Sigma (A_1)| + |\Sigma (A_2)|\\
			&\geq |B_{1, 1} \cup \cdots \cup B_{h, 1} \cup C_1^{(0)} \cup \cdots \cup C_1^{(h - 4)}|\\ 
            & ~~~~~~~~~~~~~~~~~~~~~~~~~~~~~~~~~~~~~~~~~~~~~~~~~~\ + |B_{1, 2} \cup \cdots \cup B_{h - 1, 2} \cup C_2^{(0)} \cup \cdots \cup C_2^{(h - 4)}|\\
			&= \biggl (\sum_{j = 1}^h |B_{j, 1}| + \sum_{j = 0}^{h - 4}|C_1^{(j)}| \biggl) + \biggl (\sum_{j = 1}^{h - 1} |B_{j, 2}| + \sum_{j = 0}^{h - 4}|C_2^{(j)}| \biggl)\\
			&= \biggl (\frac{h (h + 1)}{2} + 1 + \sum_{j = 0}^{h - 4}|C_1^{(j)}| \biggl) + \biggl (\frac{h (h - 1)}{2} + 1 + \sum_{j = 0}^{h - 4}|C_2^{(j)}| \biggl)\\
			&= \sum_{j = 0}^{h - 4}|C_1^{(j)}| + \sum_{j = 0}^{h - 4}|C_2^{(j)}| + h^2 + 2\\
			&= \sum_{t = 0}^{h - 4}(|C_1^{(t)}| + |C_2^{(t)}|) + h^2 + 2\\
			&= \sum_{t = 0}^{h - 4} \alpha_t + h^2 + 2.
		\end{align*}
\end{enumerate}

	\begin{lemma}\label{rssn-lem16}
		Let $h \geq 5$ be an integer. Let $A = \{0, a_2, \ldots, a_{h + 1}\}$ be a set of nonnegative integers such that $0 < a_2 \cdots < a_{h + 1}$. Furthermore, assume that
		\[a_2 \not \equiv 0 \pmod {2}, ~a_3 = 2a_2, ~\text{and}~ a_4 = 3a_2.\]
		Then
		\begin{enumerate}
			\item \begin{equation}\label{rssn-lemeq:1}
				\alpha_t \geq 1
			\end{equation} 
			for $t = 0, 1, \ldots, h - 4$.
			\item If $a_{h + 1 - t} - a_{h - t} = a_3$ for some $t \in [0, h - 4]$, then we have
			\begin{equation}\label{rssn-lemeq:2}
				\alpha_t \geq 
				\begin{cases}
					2, &\text{if $t = 0, h - 4$};\\
					3, &\text{otherwise}.  
				\end{cases}
			\end{equation} 
			\item If $a_{h + 1 - t} - a_{h - t} = a_3$ and $a_{h - t} - a_{h - 1 - t} = a_2$ for some $t \in [0, h - 5]$, then we have
			\begin{equation}\label{rssn-lemeq:5}
				\alpha_t \geq 
				\begin{cases}
					3, &\text{if $t = 0$};\\
					4, &\text{otherwise}.  
				\end{cases}
			\end{equation} 
			\item If $a_{h + 1 - t} - a_{h - t} = a_4$ for $t \in [0, h - 4]$, then we have
			\begin{equation}\label{rssn-lemeq:6}
				\alpha_t \geq 
				\begin{cases}
					3, &\text{if $t = 0$};\\
					6, &\text{otherwise}.  
				\end{cases}
			\end{equation} 
			\item If $a_{h + 1 - t} - a_{h - t} \notin \{a_2, a_3, a_4\}$ for $t = 0, \ldots, h - 4$, then we have
			\begin{equation}\label{rssn-lemeq:7}
				\alpha_t \geq 
				\begin{cases}
					3, &\text{if $t = h - 4$};\\
					5, &\text{otherwise}.  
				\end{cases}
			\end{equation} 
			\item If $a_5 - a_4 \notin \{a_2, a_3, a_4\}$ and $a_6 - a_5 = a_2$, then we have
			\begin{equation}\label{rssn-lemeq:8}
				\alpha_{h - 4} + \alpha_{h - 5} \geq 5.
			\end{equation}
			\item If $a_{h + 1} - a_h = a_4$ and $a_{h} - a_{h - 1} = a_2$, then 
			\begin{equation}\label{rssn-lemeq:9}
				\alpha_0 \geq 4.
			\end{equation}
		\end{enumerate}
	\end{lemma}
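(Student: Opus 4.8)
The plan is to prove all seven assertions by exhibiting explicit subset sums. The one structural input behind everything is that, since $a_3=2a_2$ and $a_4=3a_2$, we have $\Sigma(\{a_2,a_3,a_4\})=a_2\ast[0,6]$; consequently, for every partial sum $T$ of the large elements $a_5,\dots,a_{h+1}$, the set $\Sigma(A_1)$ contains $T+s$ for every $s\in a_2\ast[0,6]$, while $\Sigma(A_2)$ (which has lost $a_2$ but kept $a_3$ and $a_4$) contains $T+s$ for every $s\in a_2\ast\{0,2,3,5\}$. Throughout write $S_t=a_{h+2-t}+\cdots+a_{h+1}$ for the sum of the $t$ largest elements (so $S_0=0$) and $\delta_t=a_{h+1-t}-a_{h-t}$.

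First I would record the elementary identities ${\max}_{-}(B_{t+1,i})=a_{h-t}+S_t$ for $i=1,2$, and, for $t\le h-5$, ${\max}_{-}(B_{t+2,i})=a_{h-1-t}+a_{h+1-t}+S_t$, while for $t=h-4$ the relevant upper endpoint is $\max(B_{h-2,i})=a_{h-t}+a_{h+1-t}+S_t$. These turn the defining windows of $C_1^{(t)}$ and $C_2^{(t)}$ into concrete intervals; the two windows of a given $t$ are disjoint once $t\ge1$ (because $a_{h-t}+S_t>a_{h+1-t}$), but for $t=0$ the first window is contained in the second. The basic witnesses are then, for $i=1,2$: if $s$ is one of the available small corrections and $s<\delta_t$, the element $a_{h-t}+s$ lies strictly between the consecutive generators $a_{h-t}$ and $a_{h+1-t}$, hence it is a point of $\Sigma(A_i)$ that is neither a generator nor an element of any $B_{j,i}$ with $j\ge2$, so it belongs to $C_i^{(t)}$; and its translate $S_t+a_{h-t}+s$ (and occasionally $S_t+a_{h-1-t}+s$) lands in the second window, once one checks it avoids the finitely many points of $B_{t+1,i}\cup B_{t+2,i}$ lying there — if a chosen $s$ hits a forbidden point one passes to the next available $s$. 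Distinct values of $t$ produce witnesses in disjoint intervals, and the two windows of a fixed $t$ are disjoint, so all these witnesses count separately towards $\alpha_t=|C_1^{(t)}|+|C_2^{(t)}|$.

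With this set-up each part becomes a finite verification. For part (1) the only gap value not already handled by parts (2), (4) and (5) is $\delta_t=a_2$; in that case the element $S_t+a_{h-t}+a_3$ lies in the second window of $C_2^{(t)}$ and is not in $B_{t+1,2}\cup B_{t+2,2}$ (it is never equal to an element of $B_{t+2,2}-S_t$ since $a_{h-t}+2a_2-a_{h+1-t}<a_3$, and it equals a head of $B_{t+1,2}$ only when $\delta_t=a_3$), so $\alpha_t\ge1$. Parts (2)--(5) come from counting how many corrections the hypothesis on $\delta_t$ forces into each window: $\delta_t=a_3$ allows $s=a_2$ on the $A_1$ side, $\delta_t=a_4$ allows $s=a_2,a_3$ on the $A_1$ side and $s=a_3$ on the $A_2$ side, and a generic $\delta_t\notin\{a_2,a_3,a_4\}$ — which, even when small, leaves a second window of width at least $a_{h-1-t}>a_2$ — admits $S_t+a_{h-t}+a_2$, $S_t+a_{h-t}+2a_2$, an $a_{h-1-t}$-based witness, and their $A_2$-analogues. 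Parts (3), (6) and (7) use in addition that the neighbouring gap equals $a_2$, i.e.\ $a_{h-t}=a_{h-1-t}+a_2$, which forces one further element such as $a_{h-1-t}+a_3=a_{h-t}+a_2$ into the window in question. The indices $t=0$ and $t=h-4$ are treated separately: at $t=0$ the two windows merge, which halves the yield and accounts for the smaller constants there in (2), (4), (6) and (7); at $t=h-4$ the first window is $(a_4,a_5)=(3a_2,a_5)$, whose fillers are $4a_2=a_2+a_4$, $5a_2=a_3+a_4$, $6a_2=a_2+a_3+a_4$ cut off by the size of $a_5$, together with their translates into the second window.

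The main obstacle is not the method but the arithmetic of the larger constants — $\alpha_t\ge6$ in part (4), $\alpha_0\ge4$ in part (7), and $\alpha_{h-4}+\alpha_{h-5}\ge5$ in part (6) — where one must produce the maximal number of witnesses simultaneously across all four windows and certify that they are pairwise distinct and clear of every block $B_{j,i}$. This forces a somewhat lengthy subdivision according to whether the small multiples $4a_2,5a_2,6a_2$ happen to coincide with generators $a_5,a_6,\dots$ and according to the exact sizes of the two or three gaps involved; once that casework is displayed, each stated inequality follows by summing the sizes of the exhibited witness sets.
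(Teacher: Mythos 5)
Your proposal takes essentially the same route as the paper's proof: the same three-way case split ($t=0$, $1\le t\le h-5$, $t=h-4$), the same two windows per $t$ delimited by ${\max}_{-}(B_{t+1,i})$ and ${\max}_{-}(B_{t+2,i})$, and the same explicit witnesses $a_{h-t}+s$ and $S_t+a_{h-t}+s$ with $s\in\{a_2,a_3,a_4\}$, exploiting $a_3=2a_2$ and $a_4=3a_2$ exactly as the paper does. The only difference is one of completeness rather than of method: the paper writes out, for every subcase, the inequality chains certifying that each witness lies in its window and avoids $B_{t+1,i}\cup B_{t+2,i}$, whereas you defer that finite verification; since the witnesses you name are the ones the paper uses, those checks do go through.
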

	
	\begin{proof}
		We consider the following case.
		
		\noindent {\textbf{Case 1.}} ($t = 0$). In this case, we have
		\begin{enumerate}
			\item If $a_{h + 1} - a_h = a_2$, then we have $a_{h + 1} + a_2 = a_h + a_3$. Since
			\[a_h + a_3 \in \Sigma (A_2),\]
			\begin{align*}
			    {\max}_{-} (B_{1, 2}) &= a_h < a_{h + 1} = \max (B_{1, 2}) < a_h + a_3 < a_{h + 1} + a_3 \\
			    &= \min (B_{2, 2}) < a_{h + 1} + a_{h - 1} = {\max}_{-} (B_{2, 2}),
			\end{align*}
			and 
			\[a_h + a_3 \notin B_{1, 2} \cup B_{2, 2},\]
			it follows that 
			\[a_h + a_3 \in C_2^{(0)}.\] 
			Hence $|C_2^{(0)}| \geq 1$. Therefore, we have
			\[\alpha_0 \geq |C_1^{(0)}| + |C_2^{(0)}| \geq 1.\]
			This proves \eqref{rssn-lemeq:1} for $t = 0$.
			\item If $a_{h + 1} = a_h + a_3$, then we have
			\[a_{h + 1} + a_2 = a_h + a_3 + a_2 = a_h + a_4.\] 
			Since $a_h + a_2 \in \Sigma (A_1)$, and since 
            \begin{align*}
            	{\max}_{-} (B_{1, 1}) &= a_h < a_h + a_2 < a_h + a_3 = a_{h + 1} = \max (B_{1, 1})\\
            	& < a_{h + 1} + a_{h - 1} = {\max}_{-} (B_{2, 1}),
            \end{align*}
			and 
			\[a_h + a_2 \notin B_{1, 1} \cup B_{2, 1},\]
			it follows that 
			\[a_h + a_2 \in C_1^{(0)}.\]
			Hence $|C_1^{(0)}| \geq 1$. Since 
			\[a_h + a_4 \in \Sigma (A_2),\]
			\begin{align*}
				{\max}_{-} (B_{1, 2}) &= a_h < a_{h + 1} = \max (B_{1, 2}) < a_h + a_4 = a_{h + 1} + a_2 \\ 
				&< a_{h + 1} + a_3 = \min (B_{2, 2}) < a_{h + 1} + a_{h - 1} = {\max}_{-} (B_{2, 2}),
			\end{align*}
			and 
			\[a_h + a_4 \notin B_{1, 2} \cup B_{2, 2},\]
			it follows that 
			\[a_h + a_4 \in C_2^{(0)}.\] 
			Hence $|C_2^{(0)}| \geq 1$. Therefore, we have
			\[\alpha_{0} = |C_1^{(0)}| + |C_2^{(0)}| \geq 1 + 1 = 2.\]
			This proves \eqref{rssn-lemeq:2} for $t = 0$. 
			
			\item If $a_{h + 1} - a_h = a_3$ and  $a_h - a_{h - 1} = a_2$, then as proved earlier $a_h + a_2 \in C_1^{(0)}$. So, $|C_1^{(0)}| \geq 1$. Also
			\begin{align*}
				{\max}_{-} (B_{1, 2}) &= a_h = a_{h - 1} + a_2 < a_{h - 1} + a_3 < a_h + a_3 \\
				&= a_{h + 1} = \max (B_{1, 2}) < a_{h + 1} + a_{h - 1} = {\max}_{-} (B_{2, 2}),
			\end{align*}
			it follows that 
			\[a_{h - 1} + a_3 \in C_2^{(0)}.\]
			It was already shown that $a_h + a_4 \in C_2^{(0)}$. Since $a_{h - 1} + a_3 \neq a_h + a_4$, we have $|C_2^{(0)}| \geq 2$. Therefore, 
			\[\alpha_{0} = |C_1^{(0)}| + |C_2^{(0)}| \geq 1 + 2 = 3.\]
			
			This proves \eqref{rssn-lemeq:5} for $t = 0$.
			\item If $a_{h + 1} - a_h = a_4$, then we have $a_{h + 1} = a_h + a_2 + a_3$. Since
			\[a_h + a_2, a_h + a_3 \in \Sigma (A_1),\] 
			\begin{align*}
				{\max}_{-} (B_{1, 1}) &= a_h < a_h + a_2 < a_h + a_3 < a_{h + 1} = \max (B_{1, 1})\\
				& < a_{h + 1} + a_{h - 1} = {\max}_{-} (B_{2, 1}),
			\end{align*}
			and 
			\[a_h + a_2, a_h + a_3 \notin B_{1, 1} \cup B_{2, 1},\]
			it follows that 
			\[a_h + a_2, a_h + a_3 \in C_1^{(0)}.\]
			Hence $|C_1^{(0)}| \geq 2$. Since
			\[a_h + a_3 \in \Sigma (A_2),\]
			\begin{align*}
				{\max}_{-} (B_{1, 2}) &= a_h < a_h + a_3 < a_h + a_4 = a_{h + 1} = \max (B_{1, 2})\\ 
				&< a_{h + 1} + a_3 = \min (B_{2, 2}) < a_{h + 1} + a_{h - 1} = {\max}_{-} (B_{2, 2}),
			\end{align*}
			and 
			\[a_h + a_3 \notin B_{1, 2} \cup B_{2, 2},\]
			it follows that 
			\[a_h + a_3 \in C_2^{(0)}.\] 
			Hence $|C_2^{(0)}| \geq 1$. Therefore, we have
			\[\alpha_{0} = |C_1^{(0)}| + |C_2^{(0)}| \geq 2 + 1 = 3.\]
			This proves \eqref{rssn-lemeq:6} for $t = 0$.
			\item If $a_{h + 1} - a_h = a_4$ and  $a_h - a_{h - 1} = a_2$, then as we proved earlier $a_h + a_2, a_h + a_3 \in C_1^{(0)}$. So, $|C_1^{(0)}| \geq 2$. Also
			\begin{align*}
				{\max}_{-}(B_{1, 2}) &= a_h < a_{h - 1} + a_3 < a_h + a_3 < a_{h + 1} = \max (B_{1, 2}) \\
				&< a_{h + 1} + a_{h - 1} = {\max}_{-} (B_{2, 2}),
			\end{align*}
			it follows that 
			\[a_{h - 1} + a_3 \in C_2^{(0)}.\]
			It was already shown that $a_h + a_3 \in C_2^{(0)}$. Since $a_{h - 1} + a_3 \neq a_h + a_3$, we have $|C_2^{(0)}| \geq 2$. Therefore,  
			\[\alpha_{0} = |C_1^{(0)}| + |C_2^{(0)}| \geq 2 + 2 = 4.\]
			This proves \eqref{rssn-lemeq:9}.
			
			\item Now we assume that $a_{h + 1} - a_h \notin \{a_2, a_3, a_4\}$. It is easy to see that sets $\{a_{h + 1}, a_{h + 1} + a_2, a_{h + 1} + a_3\}$ and $\{a_h + a_2,a_h + a_3, a_h + a_4\}$ are disjoint. Since
			\[a_h + a_2, a_h + a_3, a_h + a_4 \in \Sigma (A_1),\] 
			\begin{align*}
				{\max}_{-} (B_{1, 1}) &= a_h < a_h + a_2 < a_h + a_3 < a_h + a_4 <\\
				& a_{h + 1} + a_4 \leq a_{h + 1} + a_{h - 1} = {\max}_{-} (B_{2, 1}),
			\end{align*}
			and
			\[\{a_h + a_2, a_h + a_3, a_h + a_4\} \cap (B_{1, 1} \cup B_{2, 1}) = \emptyset,\]
			it follows that 
			\[a_h + a_2, a_h + a_3, a_h + a_4 \in C_1^{(0)}.\]
			Hence $|C_1^{(0)}| \geq 3$. Since
			\[a_h + a_3, a_h + a_4 \in \Sigma (A_2),\]
			\begin{align*}
				{\max}_{-} (B_{1, 2}) &= a_h < a_h + a_3 < a_h + a_4 < a_{h + 1} + a_4\\
				& \leq a_{h + 1} + a_{h - 1} = {\max}_{-} (B_{2, 2}),
			\end{align*}
			and 
			\[\{a_h + a_3, a_h + a_4\} \cap (B_{1, 2} \cup B_{2, 2}) = \emptyset,\]
			it follows that 
			\[a_h + a_3, a_h + a_4 \in C_2^{(0)}.\] 
			Hence $|C_2^{(0)}| \geq 2$. Therefore, we have
			\[\alpha_{0} = |C_1^{(0)}| + |C_2^{(0)}| \geq 3 + 2 = 5.\]
			This proves \eqref{rssn-lemeq:7} for $t = 0$.
		\end{enumerate}
		
		\noindent {\textbf{Case 2.}} ($h \geq 6$ and $t \in [1, h - 5]$). Let 
		\[y = a_{h + 1} + \cdots + a_{h + 2 - t}.\]
		Then
		\begin{enumerate}
			\item If $a_{h + 1 - t} - a_{h - t} = a_2$, then we have $a_{h + 1 - t} + a_2 = a_{h - t} + a_3$. Since
			\[y + a_{h - t} + a_3 \in \Sigma (A_2),\]
			\begin{align*}
				{\max}(B_{t + 1, 2}) &= y + a_{h - t} < y + a_{h + 1 - t} = \max (B_{t + 1, 2})\\
				&< y + a_{h - t} + a_3 < y + a_{h + 1 - t} + a_3 = \min (B_{t + 2, 2}) \\
				&< y + a_{h + 1 - t} + a_{h - 1 - t} = {\max}_{-} (B_{t + 2, 2}),
			\end{align*}
			and 
			\[y + a_{h - t} + a_3 \notin B_{t + 1, 2} \cup B_{t + 2, 2},\]
			it follows that 
			\[y + a_{h - t} + a_3 \in C_2^{(t)}.\] 
			Hence $|C_2^{(t)}| \geq 1$. Therefore, we have
			\[\alpha_t \geq |C_2^{(t)}| \geq 1.\]
			This proves \eqref{rssn-lemeq:1} for each $t \in [1, h - 5]$.
			
			\item If $a_{h + 1 - t} - a_{h - t} = a_3$, then we have $a_{h + 1 - t} + a_2 = a_{h - t} + a_4$. Since
			\[a_{h - t} + a_2, y + a_{h - t} + a_2 \in \Sigma (A_1),\]
			\[a_{h - t} < a_{h - t} + a_2 < a_{h + 1- t},\]
			\begin{align*}
				{\max}_{-} (B_{t + 1, 1}) &= y + a_{h - t} < y + a_{h - t} + a_2 < y + a_{h + 1 - t} \\
				&= \max (B_{t + 1, 1}) < y + a_{h + 1 - t} + a_2 = \min (B_{t + 2, 1}) \\
				&< y + a_{h + 1 - t} + a_{h - 1 - t} = {\max}_{-} (B_{t + 2, 1}),
			\end{align*}
			and
			\[y + a_{h - t} + a_2 \notin B_{t + 1, 1} \cup B_{t + 2, 1},\]
			it follows that 
			\[a_{h - t} + a_2, y + a_{h - t} + a_2 \in C_1^{(t)}.\] 
			Hence $|C_1^{(t)}| \geq 2$. Since
			\[y + a_{h - t} + a_4 \in \Sigma (A_2),\]
			\begin{align*}
				{\max}(B_{t + 1, 2}) &= y + a_{h - t} < y + a_{h + 1 - t} = \max (B_{t + 1, 2})\\
				&< y + a_{h - t} + a_4 < y + a_{h + 1 - t} + a_3 = \min (B_{t + 2, 2}) \\
				&< y + a_{h + 1 - t} + a_{h - 1 - t} = {\max}_{-} (B_{t + 2, 2}),
			\end{align*}
			and 
			\[y + a_{h - t} + a_4 \notin B_{t + 1, 2} \cup B_{t + 2, 2},\]
			it follows that 
			\[y + a_{h - t} + a_3 \in C_2^{(t)}.\] 
			Hence $|C_2^{(t)}| \geq 1$. Therefore, we have
			\[\alpha_t = |C_1^{(t)}| + |C_2^{(t)}| \geq 2 + 1 = 3.\]
			This proves \eqref{rssn-lemeq:2} for each $t \in [1, h - 5]$.

			\item If $a_{h + 1 - t} - a_{h - t} = a_3$ and $a_{h - t} - a_{h - 1 - t} = a_2$, then as proved earlier 
			\[a_{h - t} + a_2, y + a_{h - t} + a_2 \in C_1^{(t)}.\] 
			Hence $|C_1^{(t)}| \geq 2$. Since $a_{h - t} < a_{h - 1 - t} + a_3 < a_{h + 1 - t}$, it follows that
			\[a_{h - 1 - t} + a_3 \in C_2^{(t)}.\]
			It was already shown that $y + a_{h - t} + a_4 \in C_2^{(t)}$. Since 
			\[a_{h - 1 - t} + a_3 \neq y + a_{h - t} + a_4,\]
			we have $|C_2^{(t)}| \geq 2$.	Therefore, we have
			\[\alpha_t \geq |C_1^{(t)}| + |C_2^{(t)}| \geq 2 + 2 = 4.\]
			This proves \eqref{rssn-lemeq:5} for $t \in [1, h - 5]$.
			
			\item If $a_{h + 1 - t} - a_{h - t} + a_4$, then we have 
			\[a_{h - t} < a_{h - t} + a_2 < a_{h - t} + a_3 < a_{h + 1 - t}.\] 
			Since 
			\[a_{h - t} + a_2, a_{h - t} + a_3, y + a_{h - t} + a_2, y + a_{h - t} + a_3 \in \Sigma(A_1),\]
			\[a_{h - t} < a_{h - t} + a_2 < a_{h - t} + a_3 < a_{h + 1 - t},\] 
			\begin{align*}
				{\max}_{-} (B_{t + 1, 1}) &= y + a_{h - t} < y + a_{h - t} + a_2 < y + a_{h - t} + a_3\\
				&< y + a_{h + 1 - t} = \max (B_{t + 1, 1}) < y + a_{h + 1 - t} + a_2  \\
				&= \min (B_{t + 2, 1}) < y + a_{h + 1 - t} + a_{h - 1 - t} = {\max}_{-} (B_{t + 2, 1}),
			\end{align*}
			and
			\[y + a_{h - t} + a_2, y + a_{h - t} + a_3 \notin B_{t + 1, 1} \cup B_{t + 2, 1},\]
			it follows that 
			\[a_{h - t} + a_2, a_{h - t} + a_3, y + a_{h - t} + a_2, y + a_{h - t} + a_3 \in C_1^{(t)}.\] 
			Hence $|C_1^{(t)}| \geq 4$. Since
			\[a_{h - t} + a_3, y + a_{h - t} + a_3 \in \Sigma (A_2),\]
			\[a_{h - t} < a_{h - t} + a_3 < a_{h + 1 - t},\]
			\begin{align*}
				{\max}(B_{t + 1, 2}) &= y + a_{h - t} < y + a_{h - t} = a_3 < y + a_{h + 1 - t} \\
				&= \max (B_{t + 1, 2}) < y + a_{h + 1 - t} + a_3 = \min (B_{t + 2, 2}) \\
				&< y + a_{h + 1 - t} + a_{h - 1 - t} = {\max}_{-} (B_{t + 2, 2}),
			\end{align*}
			and 
			\[y + a_{h - t} + a_3 \notin B_{t + 1, 2} \cup B_{t + 2, 2},\]
			it follows that 
			\[a_{h - t} + a_3, y + a_{h - t} + a_3 \in C_2^{(t)}.\] 
			Hence $|C_2^{(t)}| \geq 2$. Therefore, we have
			\[\alpha_t = |C_1^{(t)}| + |C_2^{(t)}| \geq 4 + 2 = 6.\]
			This proves \eqref{rssn-lemeq:6} for each $t \in [1, h - 5]$.
		   
			\item Now we assume that $a_{h + 1 - t} - a_{h - t} \notin \{a_2, a_3, a_4\}$. It is easy to see that sets $\{y + a_{h - t} + a_2, x + a_{h - t} + a_3, y + a_{h - t} + a_4\}$ and $\{y + a_{h + 1 - t}, y + a_{h + 1 - t} + a_2, y + a_{h + 1 - t} + a_3\}$ are disjoint. Since 
			\[y + a_{h - t} + a_2, y + a_{h - t} + a_3, y + a_{h - t} + a_4 \in \Sigma(A_1),\]
			\begin{align*}
				{\max}_{-} (B_{t + 1, 1}) &= y + a_{h - t} < y + a_{h - t} + a_2 < y + a_{h - t} + a_3\\
				&< y + a_{h - t} + a_4 < y + a_{h + 1 - t} + a_{h - 1 - t} = {\max}_{-} (B_{t + 2, 1}),\\
			\end{align*}
			and
			\[y + a_{h - t} + a_2, y + a_{h - t} + a_3, y + a_{h - t} + a_4 \notin B_{t + 1, 1} \cup B_{t + 2, 1},\]
			it follows that 
			\[y + a_{h - t} + a_2, y + a_{h - t} + a_3, y + a_{h - t} + a_4 \in C_1^{(t)}.\] 
			Hence $|C_1^{(t)}| \geq 3$. Since
			\[y + a_{h - t} + a_3, y + a_{h - t} + a_4 \in \Sigma (A_2),\]
			\begin{align*}
				{\max}(B_{t + 1, 2}) &= y + a_{h - t} < y + a_{h - t} + a_3 < y + a_{h - t} + a_4 \\
				&< y + a_{h + 1 - t} + a_{h - 1 - t} = {\max}_{-} (B_{t + 2, 2}),
			\end{align*}
			and 
			\[y + a_{h - t} + a_3, y + a_{h - t} + a_4 \notin B_{t + 1, 2} \cup B_{t + 2, 2},\]
			it follows that 
			\[y + a_{h - t} + a_3, y + a_{h - t} + a_4 \in C_2^{(t)}.\] 
			Hence $|C_2^{(t)}| \geq 2$. Therefore, we have
			\[\alpha_t = |C_1^{(t)}| + |C_2^{(t)}| \geq 3 + 2 = 5.\]
			This proves \eqref{rssn-lemeq:7} for each $t \in [1, h - 5]$.
		\end{enumerate} 
		
		\noindent {\textbf{Case 3}} ($t = h - 4$). Let 
		\[x = a_{h + 1} + \cdots + a_6.\]
		We have
		\begin{enumerate}
			\item If $a_5 = a_4 + a_2$, then we have $a_5 + a_2 = a_4 + a_3$. Since
			\[x + a_4 + a_3 \in \Sigma (A_2),\]
			\begin{align*}
				{\max}_{-} (B_{h - 3, 2}) &= x + a_4 < \max (B_{h - 3, 2}) = x + a_5 < x + a_4 + a_3\\
				&< x + a_5 + a_3 = \min (B_{h - 2, 2}) < x + a_5 + a_4 = \max (B_{h - 2, 2}),
			\end{align*}
			and
			\[x + a_4 + a_3 \notin B_{h- 3, 2} \cup B_{h - 2, 2},\]
			it follows that 
			\[x + a_4 + a_3 \in C_2^{(h - 4)}.\] 
			Hence $|C_2^{(h - 4)}| \geq 1$. Therefore, we have
			\[\alpha_{h - 4} \geq |C_1^{(h - 4)}| + |C_2^{(h - 4)}| \geq 1.\]
			This proves \eqref{rssn-lemeq:1} for $t = h - 4$.
			
			\item If $a_5 - a_4 = a_3$, then we have $a_4 < a_4 + a_2 < a_5$. Since
			\[a_4 + a_2, x + a_4 + a_2 \in \Sigma (A_1),\]
			\[a_4 < a_4 + a_2 < a_5,\]
			\begin{align*}
				{\max}_{-} (B_{h - 3, 1}) &= x + a_4 < x + a_4 + a_2 < x + a_5 = \max (B_{h - 3, 1}) \\
				&< x + a_5 + a_2 = \min (B_{h - 2, 2}) < x + a_5 + a_4 = \max (B_{h - 2, 1}),
			\end{align*}
			and
			\[x + a_4 + a_2 \notin B_{h - 3, 1} \cup B_{h - 2, 1}),\]
			it follows that 
			\[a_4 + a_2, x + a_4 + a_2 \in C_1^{(h - 4)}.\] 
			Hence $|C_1^{(h - 4)}| \geq 2$. Therefore, we have
			\[\alpha_{h - 4} \geq |C_1^{(h - 4)}| + |C_2^{(h - 4)}| \geq 2.\]
			This proves \eqref{rssn-lemeq:2} for $t = h - 4$.
			
			\item If $a_5 = a_4 + a_4$, then 
			\[a_4 < a_4 + a_2 < a_4 + a_3 < a_5.\] 
			Since
			\[a_4 + a_2, a_4 + a_3, x + a_4 + a_2, x + a_4 + a_3 \in \Sigma (A_1),\] 
			\[a_4 < a_4 + a_2 < a_4 + a_3 < a_5,\]
			\begin{align*}
				{\max}_{-} (B_{h - 3, 1}) &= x + a_4 < x + a_4 + a_2 < x + a_4 + a_3 < x + a_5 \\
				&= \max (B_{h - 3, 1}) < x + a_5 + a_4 = \max (B_{h - 2, 1}),
			\end{align*}
			and
			\[x + a_4 + a_2, x + a_4 + a_3 \notin B_{h - 3, 1} \cup B_{h - 2, 1},\]
			it follows that 
			\[a_4 + a_2, a_4 + a_3, x + a_4 + a_2, x + a_4 + a_3, \in C_1^{(h - 4)}.\] 
			Hence $|C_1^{(h - 4)}| \geq 4$. Since
			\[a_4 + a_3, x + a_4 + a_3 \in \Sigma (A_2),\]
			\[a_4 < a_4 + a_3 < a_5,\]
			\begin{align*}
				{\max}_{-} (B_{h - 3, 2}) &= x + a_4 < x + a_4 + a_3 < x + a_5\\
				&= \max (B_{h - 3, 2}) < x + a_5 + a_4 = \max (B_{h - 2, 2}),
			\end{align*}
			and
			\[x + a_4 + a_3 \notin B_{h- 3, 2} \cup B_{h - 2, 2},\]
			it follows that 
			\[a_4 + a_3, x + a_4 + a_3 \in C_2^{(h - 4)}.\] 
			Hence $|C_2^{(h - 4)}| \geq 2$. Therefore, we have
			\[\alpha_{h - 4} \geq |C_1^{(h - 4)}| + |C_2^{(h - 4)}| \geq 4 + 2 = 6.\]
			This proves \eqref{rssn-lemeq:6} for $t = h - 4$.

			\item If $a_5 - a_4 \notin \{a_2, a_3\}$, then it is easy to see that 
			\[x + a_5 \notin \{x + a_4 + a_2, x + a_4 + a_3\}.\]
			Since
			\[x + a_4 + a_2, x + a_4 + a_3 \in \Sigma (A_1),\] 
			\begin{align*}
				{\max}_{-} (B_{h - 3, 1}) &= x + a_4 < x + a_4 + a_2 < x + a_4 + a_3 < x + a_5 + a_3\\
				&= \min (B_{h - 2, 1}) < x + a_5 + a_4 = \max (B_{h - 2, 1}),
			\end{align*}
			and
			\[x + a_4 + a_2, x + a_4 + a_3 \notin B_{h - 3, 1} \cup B_{h - 2, 1},\]
			it follows that 
			\[x + a_4 + a_2, x + a_4 + a_3, \in C_1^{(h - 4)}.\] 
			Hence $|C_1^{(h - 4)}| \geq 2$. Since
			\[x + a_4 + a_3 \in \Sigma (A_2),\]
			\begin{align*}
				{\max}_{-} (B_{h - 3, 2}) &= x + a_4 < x + a_4 + a_3 < x + a_5 + a_3\\
				&= \min (B_{h - 2, 2}) < x + a_5 + a_4 = \max (B_{h - 2, 2}),
			\end{align*}
			and
			\[x + a_4 + a_3 \notin B_{h- 3, 2} \cup B_{h - 2, 2},\]
			it follows that 
			\[x + a_4 + a_3 \in C_2^{(h - 4)}.\] 
			Hence $|C_2^{(h - 4)}| \geq 1$. Therefore, we have
			\[\alpha_{h - 4} \geq |C_1^{(h - 4)}| + |C_2^{(h - 4)}| \geq 2 + 1 = 3.\]
			This proves \eqref{rssn-lemeq:7} for $t = h - 4$.
		
			\item If $a_5 - a_4 \notin \{a_2, a_3\}$ and $a_6 = a_5 + a_2$, then it is easy to see that 
			\[a_4 < a_5 \neq a_4 + a_2 < a_6.\]
			\begin{enumerate}
				\item [(i)] If $a_4 <  a_4 + a_2 < a_5$, then we have
				\[a_4 + a_2 \in C_1^{(h - 4)}.\]
				Also, we proved earlier that
				\[x + a_4 + a_2, x + a_4 + a_3, \in C_1^{(h - 4)},\]
				and
				\[x + a_4 + a_3 \in C_2^{(h - 4)}.\]
				Since $a_4 + a_2 \notin \{x + a_4 + a_2, x + a_4 + a_3\}$, we have
				\[|C_1^{(h - 4)}| \geq 3 ~\text{and}~ |C_2^{(h - 4)}| \geq 1.\]
				Hence
				\[\alpha_{h - 4} + \alpha_{h - 5} \geq |C_1^{(h - 4)}| + |C_2^{(h - 4)}| + 1 \geq 3 + 1 + 1 = 5.\]
				\item [(ii)] If $a_5 < a_4 + a_2 < a_6$, then we have
				\[a_4 + a_2 \in C_1^{(h - 5)}.\]
				Clearly, if $h = 5$, then we have
				\[a_4 + a_3 \in \Sigma (A_2),\]
				and
				\[a_4 < a_5 < a_6 \neq a_4 + a_3 < a_6 + a_3.\]
				Hence $a_4 + a_2 \in C_2^{(h - 5)}$. If $h \geq 6$, then we have
				\[a_{h + 1} + \cdots + a_7 + a_4 + a_3 \in \Sigma (A_2),\]
				\begin{align*}
					{\max}_{-} (B_{h - 4, 2}) &= a_{h + 1} + \cdots + a_7 + a_5 \\
					&< a_{h + 1} + \cdots + a_7 + a_6  \neq a_{h + 1} + \cdots + a_7 + a_4 + a_3\\
					&< x + a_3 = \min (B_{h - 3, 2}) < {\max}_{-} (B_{h - 2, 2}),
				\end{align*}
				\[a_{h + 1} + \cdots + a_7 + a_4 + a_3 \notin B_{h - 4, 2} \cup B_{h - 3, 2}\]
				Hence $$a_{h + 1} + \cdots + a_7 + a_4 + a_3 \in C_2^{(h - 5)},$$ and so
				\[|C_1^{(h - 5)}| \geq 1 ~\text{and}~ |C_2^{(h - 5)}| \geq 1.\]
				Therefore,
				\[\alpha_{h - 5} \geq |C_1^{(h - 5)}| + |C_2^{(h - 5)}| \geq 1 + 1 = 2.\]
				Since $a_5 - a_4 \notin \{a_2, a_4\}$, we proved earlier $\alpha_{h - 4} \geq 3$, it follows that 
				\[\alpha_{h - 4} + \alpha_{h - 5} \geq 3 + 2 = 5.\]
			\end{enumerate}
			 Therefore, 
			\[\alpha_{h - 4} + \alpha_{h - 5} \geq 5.\]
		\end{enumerate}
		This proves \eqref{rssn-lemeq:8} for $t = h - 4$. 
		This completes the proof.
	\end{proof}
The following Lemma follows from Theorem $4$ in \cite{mmp2024}, but our proof is slightly different.
\begin{lemma}\label{rssn-lem18}
	Let $h \geq 5$ be an integer. Let $A = [0, h]$ be a set of nonnegative integers. Then, we have
	\[h_{\pm}^\wedge A = \biggl [- \frac{h(h + 1)}{2}, \frac{h(h + 1)}{2} \biggl].\]
	Therefore,
	\[|h_{\pm}^\wedge A| = h^2 + h + 1.\]
\end{lemma}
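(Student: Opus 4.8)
Write $M=\tfrac{h(h+1)}{2}$. The inclusion $h_{\pm}^{\wedge} A \subseteq [-M, M]$ is immediate, since every element of $h_{\pm}^{\wedge} A$ has the form $\sum_{i=0}^{h}\lambda_i i$ with $\lambda_i\in\{-1,0,1\}$ and hence absolute value at most $0+1+\cdots+h=M$. The content of the lemma is therefore the reverse inclusion $[-M,M]\subseteq h_{\pm}^{\wedge} A$.

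Since $\sum_i|\lambda_i|=h$ while $|A|=h+1$, in every representation exactly one coefficient vanishes; deleting the corresponding element of $A$ leaves a set $T$ of size $h$ all of whose elements get coefficient $\pm 1$, and by the second recalled fact the set of these signed sums equals $-s(T)+2\ast\Sigma(T)$, where $s(T)=\sum_{b\in T}b$. I would use three deletions. Deleting $0$ leaves $T=\{1,2,\ldots,h\}$ with $s(T)=M$, and as $\Sigma(\{1,\ldots,h\})=[0,M]$, this puts into $h_{\pm}^{\wedge} A$ every integer of $[-M,M]$ congruent to $M$ modulo $2$. Deleting $3$ leaves $T=\{0,1,2,4,5,\ldots,h\}$ with $s(T)=M-3$; here $\Sigma(T)=\Sigma(\{1,2,4,5,\ldots,h\})=[0,M-3]$ (start from $\Sigma(\{1,2\})=[0,3]$ and adjoin $4,5,\ldots,h$ in turn, each new element being at most one more than the running total), so this puts into $h_{\pm}^{\wedge} A$ every integer of $[-(M-3),M-3]$ congruent to $M-1$ modulo $2$. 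Deleting $1$ leaves $T=\{0,2,3,\ldots,h\}$, and taking all coefficients equal to $1$, respectively $-1$, shows $M-1,\,-(M-1)\in h_{\pm}^{\wedge} A$.

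Combining the three: all integers of parity $M$ in $[-M,M]$ lie in $h_{\pm}^{\wedge} A$ (first deletion), and all integers of parity $M-1$ in $[-(M-1),M-1]$ lie in $h_{\pm}^{\wedge} A$ — the interior ones by the second deletion and the two extreme ones $\pm(M-1)$ by the third. As $M$ and $M-1$ have opposite parity, these two families exhaust $[-M,M]$, giving $h_{\pm}^{\wedge} A=[-M,M]$ and hence $|h_{\pm}^{\wedge} A|=2M+1=h^2+h+1$.

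I do not expect a real obstacle here: the argument rests only on the two elementary identities $\Sigma(\{1,\ldots,h\})=[0,M]$ and $\Sigma(\{1,2,4,5,\ldots,h\})=[0,M-3]$ together with the boundary bookkeeping that routes the extreme values $\pm(M-1)$, which the ``delete $3$'' family misses, through the ``delete $1$'' family. The hypothesis $h\ge 5$ is used only to guarantee $4\in A$, so that the set $\{1,2,4,5,\ldots,h\}$ is meaningful.
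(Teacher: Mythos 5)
Your proof is correct, and it takes a genuinely different route from the paper's. You argue constructively: the three deletions (of $0$, of $3$, and of $1$) produce, via the identity $h_{\pm}^{\wedge}T = -s(T) + 2\ast\Sigma(T)$ and the interval computations $\Sigma(\{1,\ldots,h\}) = [0,M]$ and $\Sigma(\{1,2,4,5,\ldots,h\}) = [0,M-3]$, the full parity class of $M$ in $[-M,M]$ and the full parity class of $M-1$ in $[-(M-1),M-1]$, and the boundary values $\pm(M-1)$ are correctly routed through the third deletion; together with the trivial upper inclusion this gives the set equality outright. The paper instead proceeds by counting: it invokes the $\alpha_t$ machinery of Lemma \ref{rssn-lem16} (built on the disjoint blocks $B_{j,1}$, $B_{j,2}$ and the exceptional sets $C_i^{(t)}$) to get $|h_{\pm}^{\wedge}A| \geq h^2 + h - 1$, then exhibits the single pair $\pm v$ with $v = 0 - 1 - 2 - 4 - \cdots - h$ lying outside $h_{\pm}^{\wedge}A_1 \cup h_{\pm}^{\wedge}A_2$ to reach $h^2 + h + 1$, and finally deduces the interval structure by squeezing against the upper bound. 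Your argument is more elementary and self-contained --- it does not depend on the notational apparatus preceding Lemma \ref{rssn-lem16} and in fact works already for $h \geq 4$ (even $h \geq 3$ with trivial adjustments) --- whereas the paper's version reuses machinery that is needed anyway for the neighbouring lemmas, which is presumably why it is organized that way.
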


\begin{proof}
	By applying Lemma \ref{rssn-lem16} (See $(1)$), we get
	\[\sum_{j = 0}^{h - 4} \alpha_j \geq h - 3.\]
	Hence
	\[|h_{\pm}^\wedge A| \geq |h_{\pm}^\wedge A_1| + |h_{\pm}^\wedge A_2| = \sum_{j = 0}^{h - 4} \alpha_j + h^2 + 2 = h^2 + h - 1.\] 
	Let 
	\[v = 0 - 1 - 2 - 4 - \cdots - h.\]
	Then 
	\[v \not \equiv x \pmod2,\]
	where $x \in h_{\pm}^\wedge A_1$. Hence
	\[v \notin h_{\pm}^\wedge A_1.\]
	Since $\min (h_{\pm}^\wedge A_2) = 0 - 2 - \cdots - h$ and ${\min}_{+} (h_{\pm}^\wedge A_2) = 0 + 2 - 3 \cdots - h$, it follows that
	\[\min (h_{\pm}^\wedge A_2) < v < {\min}_{+} (h_{\pm}^\wedge A_2).\] 
	Hence
	\[v \notin h_{\pm}^\wedge A_2.\]
	Therefore,
	\[v \notin h_{\pm}^\wedge A_1 \cup h_{\pm}^\wedge A_2.\]
	Since $h_{\pm}^\wedge A_1 \cup h_{\pm}^\wedge A_2$ is a symmetric set and $v \notin h_{\pm}^\wedge A_1 \cup h_{\pm}^\wedge A_2$, it follows that
	\[- v \notin h_{\pm}^\wedge A_1 \cup h_{\pm}^\wedge A_2.\]
	Since $- v, v \in h_{\pm}^\wedge A\setminus (h_{\pm}^\wedge A_1 \cup h_{\pm}^\wedge A_2)$, it follows that 
\begin{equation}\label{rssn-lem18eq1}
|h_{\pm}^\wedge A| \geq |h_{\pm}^\wedge A_1 \cup h_{\pm}^\wedge A_2| + 2 = |h_{\pm}^\wedge A_1| + |h_{\pm}^\wedge A_2| + 2 \geq h^2 + h + 1.
\end{equation}
Since 
\begin{equation}\label{rssn-lem18eq2}
h_{\pm}^\wedge A \subseteq \biggl [- \frac{h(h + 1)}{2}, \frac{h(h + 1)}{2} \biggl],
\end{equation}
we have
\begin{equation}\label{rssn-lem18eq3}
|h_{\pm}^\wedge A| \leq  h^2 + h +1.
\end{equation}
Hence it follows from $\eqref{rssn-lem18eq1}$, $\eqref{rssn-lem18eq2}$ and $\eqref{rssn-lem18eq3}$ that
	\[|h_{\pm}^\wedge A| = h^2 + h + 1,\]
	and
	\[h_{\pm}^\wedge A = \biggl [- \frac{h(h + 1)}{2}, \frac{h(h + 1)}{2} \biggl].\]
\end{proof}

\begin{lemma}\label{rssn-lem21}
	Let $h \geq 5$ be an integer. Let $A = \{0, 1, 2, 3, 5, 6, \ldots, h + 1\}$ be a set of nonnegative integers. Then, we have
	\[|h_{\pm}^\wedge A| \geq h^2 + h + 2.\]
\end{lemma}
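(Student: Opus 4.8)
The plan is to split $A=\{0,1,2,3,5,6,\dots,h+1\}$ along its two smallest elements: put $A_1=A\setminus\{0\}=\{1,2,3,5,6,\dots,h+1\}$ and $A_2=A\setminus\{a_2\}=\{0,2,3,5,6,\dots,h+1\}$, each a set of $h$ integers. Since $a_1=0\not\equiv 1=a_2\pmod 2$, the first recalled fact shows that $h_{\pm}^\wedge A_1$ and $h_{\pm}^\wedge A_2$ are disjoint subsets of $h_{\pm}^\wedge A$, while the second recalled fact gives $|h_{\pm}^\wedge A_i|=|\Sigma(A_i)|$ for $i=1,2$. So I would first estimate these two sizes and then exhibit two additional elements of $h_{\pm}^\wedge A$ lying outside $h_{\pm}^\wedge A_1\cup h_{\pm}^\wedge A_2$.

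For $A_1$: since $1\in A_1$ and each element of $A_1$ is at most one more than the sum of the smaller ones ($2\le 1+1$, $3\le 1+1+2$, $5\le 1+1+2+3$, and from then on the elements are consecutive), the subsum set $\Sigma(A_1)$ is the full interval $[0,\sigma_1]$ with $\sigma_1=1+2+3+5+\cdots+(h+1)=\tfrac{(h+1)(h+2)}{2}-4$, so $|h_{\pm}^\wedge A_1|=\tfrac{(h+1)(h+2)}{2}-3$. For $A_2$: Theorem~\ref{thm:5} gives $|h_{\pm}^\wedge A_2|\ge \tfrac{h(h-1)}{2}+1$, and if equality held then Theorem~\ref{thm:6} (applicable as $h\ge 5\ge 4$ and $0\in A_2$) would force $A_2=d\ast[0,h-1]$; but $A_2$ contains $0,2,3$ and is not an arithmetic progression, so in fact $|h_{\pm}^\wedge A_2|\ge \tfrac{h(h-1)}{2}+2$.

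For the two extra elements, set $v=-(1+2+5+6+\cdots+(h+1))=-(\sigma_1-3)$. Assigning coefficient $+1$ to $0$, coefficient $0$ to $3$, and coefficient $-1$ to every other element of $A$ shows $v\in h_{\pm}^\wedge A$, whence $-v\in h_{\pm}^\wedge A$ by the symmetry of restricted signed sumsets. Every element of $h_{\pm}^\wedge A_1=-\sigma_1+2\ast\Sigma(A_1)$ is $\equiv\sigma_1\pmod 2$, while $v\equiv\sigma_1-3\equiv\sigma_1+1\pmod 2$, so $v\notin h_{\pm}^\wedge A_1$. Writing $\sigma_2=\sigma_1-1=\sum_{a\in A_2}a$, the two smallest elements of $h_{\pm}^\wedge A_2=-\sigma_2+2\ast\Sigma(A_2)$ are $-\sigma_2$ and $-\sigma_2+4$ (coming from the subsums $\emptyset$ and $\{2\}$), and since $v=-\sigma_2+2$ lies strictly between them, $v\notin h_{\pm}^\wedge A_2$. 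As $h_{\pm}^\wedge A_1\cup h_{\pm}^\wedge A_2$ is symmetric and $v<0$, both $v$ and $-v$ lie in $h_{\pm}^\wedge A\setminus(h_{\pm}^\wedge A_1\cup h_{\pm}^\wedge A_2)$ and are distinct. Combining all of this,
\[
|h_{\pm}^\wedge A|\ \ge\ |h_{\pm}^\wedge A_1|+|h_{\pm}^\wedge A_2|+2\ \ge\ \Bigl(\tfrac{(h+1)(h+2)}{2}-3\Bigr)+\Bigl(\tfrac{h(h-1)}{2}+2\Bigr)+2\ =\ h^2+h+2.
\]

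I expect the delicate point to be the extra pair $\{v,-v\}$: the two size estimates by themselves give only $|h_{\pm}^\wedge A_1|+|h_{\pm}^\wedge A_2|\ge h^2+h$, which already beats the target for $h\ge 6$ (indeed an exact count of $\Sigma(A_2)$ would), but falls two short when $h=5$, so a genuinely new contribution is required there, and the explicit symmetric witness $\{v,-v\}$ supplies it uniformly in $h$.
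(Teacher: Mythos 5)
Your proof is correct, and its skeleton coincides with the paper's: the same decomposition into $A_1=A\setminus\{0\}$ and $A_2=A\setminus\{1\}$, disjointness of the two restricted signed sumsets by parity, and the identical symmetric witness pair $v=-(1+2+5+\cdots+(h+1))$ and $-v$, excluded from $h_{\pm}^\wedge A_1$ by parity and from $h_{\pm}^\wedge A_2$ by sitting strictly between its two smallest elements (this is exactly the device the paper imports from Lemma \ref{rssn-lem18}). Where you differ is in how the bound $|h_{\pm}^\wedge A_1|+|h_{\pm}^\wedge A_2|\geq h^2+h$ is obtained: the paper invokes its $\alpha_t$-machinery (Lemma \ref{rssn-lem16}, parts (1) and (2), using $a_5-a_4=a_3$ to get $\alpha_{h-4}\geq 2$ and summing $\alpha_t\geq 1$ elsewhere), whereas you compute $|\Sigma(A_1)|$ exactly --- noting that $1,2,3,5,6,\ldots,h+1$ is a complete sequence so $\Sigma(A_1)=[0,\sigma_1]$ --- and handle $A_2$ by combining the lower bound of Theorem \ref{thm:5} with the inverse Theorem \ref{thm:6} (valid since $h\geq 5$ and $A_2$ is not of the form $d\ast[0,h-1]$). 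Both counts land on the same intermediate value $h^2+h$, so the two arguments are quantitatively equivalent; your version is more self-contained for this particular set, while the paper's reuses the $C_i^{(t)}$ apparatus it needs anyway for the general Lemma \ref{rssn-lem17}. The only blemish is the closing heuristic remark (the claim that $h^2+h$ "already beats the target for $h\geq 6$" is not literally true as written), but it is commentary and does not enter the proof.
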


\begin{proof}
	By applying Lemma \ref{rssn-lem16} (See $(1)$ and $(2)$), we get
	\[\alpha_{h - 4} \geq 2.\]
	Therefore,
	\[\sum_{j = 0}^{h - 5} \alpha_j + \alpha_{h - 4} \geq (h - 4) + 2 = h - 2.\]
	Since $|h_{\pm}^\wedge A| \geq |h_{\pm}^\wedge A_1| + |h_{\pm}^\wedge A_2| = \sum_{j = 0}^{h - 4} \alpha_j + h^2 + 2$, it follows that 
	\[|h_{\pm}^\wedge A| \geq |h_{\pm}^\wedge A_1| + |h_{\pm}^\wedge A_2| \geq h ^2 + h.\]
	Let
	\[v = 0 - 1 - 2 - 5 - 6 - \cdots - (h + 1).\]
	Then, as similar Lemma \ref{rssn-lem18}, we show that
	\[- v, v \notin h_{\pm}^\wedge A_1 \cup h_{\pm}^\wedge A_2.\]
	Since $- v, v \in h_{\pm}^\wedge A \setminus (h_{\pm}^\wedge A_1 \cup h_{\pm}^\wedge A_2)$, it follows that
	\[|h_{\pm}^\wedge A| \geq |h_{\pm}^\wedge A_1 \cup h_{\pm}^\wedge A_2| + 2 = |h_{\pm}^\wedge A_1| + |h_{\pm}^\wedge A_2| + 2 \geq h^2 + h + 2.\]
	Hence
	\[|h_{\pm}^\wedge A| = |h_{\pm}^\wedge A| \geq h^2 + h + 2.\]	
	This completes the proof.
\end{proof}

\begin{lemma}\label{rssn-lem22}
	Let $h \geq 5$ be an integer. Let $A = \{0, 1, 2, 3, \ldots, h - 1, h + 1\}$ be a set of nonnegative integers. Then, we have
	\[|h_{\pm}^\wedge A| \geq h^2 + h + 3.\]
\end{lemma}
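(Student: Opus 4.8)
The plan is to situate $A = \{0,1,2,\ldots,h-1,h+1\}$ inside the Notation framework above: here $a_2 = 1 \not\equiv 0 \pmod 2$, and $a_3 = 2 = 2a_2$, $a_4 = 3 = 3a_2$, so that framework and Lemma~\ref{rssn-lem16} both apply, with $A_1 = A \setminus \{0\}$ and $A_2 = A \setminus \{1\}$. The argument has two stages: first bound $|h_{\pm}^\wedge A_1| + |h_{\pm}^\wedge A_2|$ from below by $h^2+h+1$ using the quantities $\alpha_t$, and then exhibit a symmetric pair $\{v,-v\} \subseteq h_{\pm}^\wedge A$ disjoint from $h_{\pm}^\wedge A_1 \cup h_{\pm}^\wedge A_2$. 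Since $h_{\pm}^\wedge A_1$ and $h_{\pm}^\wedge A_2$ are themselves disjoint (observation~(1) following the Notation block), this gives
\[|h_{\pm}^\wedge A| \ \geq\ |h_{\pm}^\wedge A_1| + |h_{\pm}^\wedge A_2| + 2 \ \geq\ h^2 + h + 3.\]

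For the first stage I apply Lemma~\ref{rssn-lem16}. Its part~(1) gives $\alpha_t \geq 1$ for every $t \in [0,h-4]$. The crucial feature of $A$ is that its two largest gaps are $a_{h+1} - a_h = (h+1)-(h-1) = 2 = a_3$ and $a_h - a_{h-1} = (h-1)-(h-2) = 1 = a_2$, so part~(3) of Lemma~\ref{rssn-lem16} applies with $t = 0$ (a legitimate index since $h \geq 5$ gives $0 \leq h-5$) and yields $\alpha_0 \geq 3$. Hence $\sum_{t=0}^{h-4}\alpha_t \geq 3 + (h-4) = h-1$, and the chain of inequalities recorded as observation~(9) following the Notation block gives
\[|h_{\pm}^\wedge A_1| + |h_{\pm}^\wedge A_2| = |\Sigma(A_1)| + |\Sigma(A_2)| \ \geq\ \sum_{t=0}^{h-4}\alpha_t + h^2 + 2 \ \geq\ h^2 + h + 1.\]

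For the second stage I let $v$ be the signed sum over $A$ with coefficient $0$ on $a_4 = 3$ and coefficient $-1$ on every other element; as $|A| = h+1$ this uses exactly $h$ nonzero coefficients, so $v \in h_{\pm}^\wedge A$, and one computes $v = -a_2 - a_3 - a_5 - \cdots - a_{h+1} = -\frac{1}{2}(h^2+h-4)$. Since $|A_1| = h$, every element of $h_{\pm}^\wedge A_1$ is congruent modulo $2$ to $a_2 + a_3 + \cdots + a_{h+1} = \frac{1}{2}(h^2+h+2)$, which differs from $v$ by the odd number $h^2+h-1$; hence $v \notin h_{\pm}^\wedge A_1$. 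On the other hand, $\min(h_{\pm}^\wedge A_2) = -\frac{1}{2}(h^2+h)$, and since the smallest nonzero element of $A_2$ is $a_3 = 2$, the representation $h_{\pm}^\wedge A_2 = \min(h_{\pm}^\wedge A_2) + 2\ast\Sigma(A_2)$ recalled in Section~\ref{section-rssn-aux-lemma} gives ${\min}_{+}(h_{\pm}^\wedge A_2) = -\frac{1}{2}(h^2+h) + 4$. Since
\[-\frac{1}{2}(h^2+h) \ <\ v \ =\ -\frac{1}{2}(h^2+h) + 2 \ <\ -\frac{1}{2}(h^2+h) + 4,\]
it follows that $v \notin h_{\pm}^\wedge A_2$. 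Finally, $h_{\pm}^\wedge A_1 \cup h_{\pm}^\wedge A_2$ is symmetric and $v \neq 0$, so $-v$ is also excluded from it and $-v \neq v$, which completes the second stage and hence the proof.

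I expect the only delicate point to be the last displayed chain of strict inequalities: the entire gain of $+2$ over $h^2+h+1$ relies on $v$ falling strictly between the two smallest elements of $h_{\pm}^\wedge A_2$, and a different choice of which element of $A$ to annihilate would typically spoil this, so that inequality is the one I would verify most carefully. The remaining computations — the value of $v$, the value of $\min(h_{\pm}^\wedge A_2)$, and the common parity of $h_{\pm}^\wedge A_1$ — are routine.
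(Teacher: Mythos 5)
Your proposal is correct and follows essentially the same route as the paper: both obtain $\alpha_0 \geq 3$ from parts (1) and (3) of Lemma~\ref{rssn-lem16} (using the gaps $a_{h+1}-a_h = a_3$ and $a_h - a_{h-1} = a_2$), deduce $|h_{\pm}^\wedge A_1| + |h_{\pm}^\wedge A_2| \geq h^2+h+1$ via observation~(9), and then adjoin the symmetric pair $\pm v$ with $v = -1-2-4-\cdots-(h-1)-(h+1)$, excluded from $h_{\pm}^\wedge A_1$ by parity and from $h_{\pm}^\wedge A_2$ by lying strictly between its two smallest elements. The paper merely refers to Lemma~\ref{rssn-lem18} for this last verification, which you have written out explicitly and correctly.
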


\begin{proof}
	By applying Lemma \ref{rssn-lem16} (See $(1)$ and $(3)$), we get
	\[\alpha_0 \geq 3.\]
	Therefore,
	\[\sum_{j = 0}^{h - 4} = \alpha_0 + \sum_{j = 0}^{h - 5} \alpha_j \geq 3 + (h - 4) + 2 = h - 1.\]
	Since $|h_{\pm}^\wedge A| \geq |h_{\pm}^\wedge A_1| + |h_{\pm}^\wedge A_2| = \sum_{j = 0}^{h - 4} \alpha_j + h^2 + 2$, it follows that 
	\[|h_{\pm}^\wedge A| \geq |h_{\pm}^\wedge A_1| + |h_{\pm}^\wedge A_2| \geq h ^2 + h + 1.\]
	Let
	\[v = 0 - 1 - 2 - 4 - \cdots - (h - 1) - (h + 1).\]
	Then, as similar Lemma \ref{rssn-lem18}, we show that
	\[- v, v \notin h_{\pm}^\wedge A_1 \cup h_{\pm}^\wedge A_2.\]
	Since $- v, v \in h_{\pm}^\wedge A \setminus (h_{\pm}^\wedge A_1 \cup h_{\pm}^\wedge A_2)$, it follows that
	\[|h_{\pm}^\wedge A| \geq |h_{\pm}^\wedge A_1 \cup h_{\pm}^\wedge A_2| + 2 = |h_{\pm}^\wedge A_1| + |h_{\pm}^\wedge A_2| + 2 \geq h^2 + h + 3.\]
	Hence
	\[|h_{\pm}^\wedge A| = |h_{\pm}^\wedge A| \geq h^2 + h + 3.\]	
	This completes the proof.
\end{proof}
	
	\begin{lemma}\label{rssn-lem17}
		Let $h \geq 5$ be an integer. Let $A = \{a_1, a_2, \ldots, a_{h + 1}\}$ be a set of nonnegative integers such that $a_1 = 0 < a_2 < \cdots < a_{h + 1}$. Furthermore, assume that
		\[a_2 \not \equiv 0 \pmod {2}, ~a_3 = 2a_2, ~\text{and}~ a_4 = 3a_2.\]
		Then, we have
		\[|h_{\pm}^\wedge A| \geq h^2 + h + 2,\]
		when $A \neq a_2 \ast [0, h]$. Otherwise, 
		\[|h_{\pm}^\wedge A| = h^2 + h + 1.\]
	\end{lemma}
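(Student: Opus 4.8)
The plan is to split the argument according to whether $A=a_2\ast[0,h]$. The case $A=a_2\ast[0,h]$ is immediate: by the scaling identity $h_{\pm}^{\wedge}(c\ast A)=c\ast(h_{\pm}^{\wedge}A)$ we have $|h_{\pm}^{\wedge}A|=|h_{\pm}^{\wedge}[0,h]|$, and since $h\geq 5$, Lemma~\ref{rssn-lem18} gives this value as $h^2+h+1$. So all the work is in the case $A\neq a_2\ast[0,h]$, where the target is $|h_{\pm}^{\wedge}A|\geq h^2+h+2$.

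For that case I would first observe that the hypotheses pin down $a_1=0$, $a_2$, $a_3=2a_2$, $a_4=3a_2$; hence if every gap $a_{i+1}-a_i$ with $i\in[4,h]$ were equal to $a_2$, we would get $a_i=(i-1)a_2$ for all $i$, i.e.\ $A=a_2\ast[0,h]$. So there is an index, which I write as $h-t_0$ with $t_0\in[0,h-4]$, such that $a_{h+1-t_0}-a_{h-t_0}\neq a_2$. Feeding this one ``bad gap'' into Lemma~\ref{rssn-lem16}: whether $a_{h+1-t_0}-a_{h-t_0}$ equals $a_3$, equals $a_4$, or lies outside $\{a_2,a_3,a_4\}$, parts (2), (4), (5) of that lemma respectively yield $\alpha_{t_0}\geq 2$ in every case (even at the boundary values $t_0=0,h-4$). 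Combining with part (1), which gives $\alpha_t\geq 1$ for all $t\in[0,h-4]$, we get $\sum_{t=0}^{h-4}\alpha_t\geq(h-4)+2=h-2$, and then the final displayed bound in the observations preceding Lemma~\ref{rssn-lem16} gives
\[
|h_{\pm}^{\wedge}A|\geq |h_{\pm}^{\wedge}A_1|+|h_{\pm}^{\wedge}A_2|\geq \sum_{t=0}^{h-4}\alpha_t+h^2+2\geq h^2+h.
\]

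To gain the remaining two elements I would use the device already employed in Lemmas~\ref{rssn-lem18},~\ref{rssn-lem21},~\ref{rssn-lem22}: set $v=-(a_2+a_3+a_5+a_6+\cdots+a_{h+1})$, which lies in $h_{\pm}^{\wedge}A$ (coefficient $1$ on $a_1=0$, coefficient $0$ on $a_4$, coefficient $-1$ on every other $a_i$, total weight $h$). I claim $v,-v\notin h_{\pm}^{\wedge}A_1\cup h_{\pm}^{\wedge}A_2$. For $A_1=A\setminus\{0\}$: every element of $h_{\pm}^{\wedge}A_1$ has all coefficients $\pm1$, hence is $\equiv a_2+\cdots+a_{h+1}\pmod 2$, while $v\equiv(a_2+\cdots+a_{h+1})-a_4\pmod 2$ with $a_4=3a_2$ odd, so $v\notin h_{\pm}^{\wedge}A_1$. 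For $A_2=A\setminus\{a_2\}$: the two smallest elements of $h_{\pm}^{\wedge}A_2$ are $-a_3-a_4-\cdots-a_{h+1}$ and $a_3-a_4-\cdots-a_{h+1}$, and using $a_3=2a_2$, $a_4=3a_2$ one checks that $v$ lies strictly between them (both gaps equal $2a_2$), so $v\notin h_{\pm}^{\wedge}A_2$. Since $h_{\pm}^{\wedge}A_1\cup h_{\pm}^{\wedge}A_2$ is symmetric, $-v$ is excluded too; and $v\neq -v$ because $v<0$. As $h_{\pm}^{\wedge}A_1$ and $h_{\pm}^{\wedge}A_2$ are disjoint, this yields
\[
|h_{\pm}^{\wedge}A|\geq |h_{\pm}^{\wedge}A_1\cup h_{\pm}^{\wedge}A_2|+2=|h_{\pm}^{\wedge}A_1|+|h_{\pm}^{\wedge}A_2|+2\geq h^2+h+2,
\]
which completes the proof.

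I expect the main difficulty to be organisational rather than a single hard estimate: one has to see that a \emph{single} non-$a_2$ gap anywhere among $a_5-a_4,\ldots,a_{h+1}-a_h$ already forces some $\alpha_{t_0}\geq 2$, so the full numerology of Lemma~\ref{rssn-lem16} is not needed here; and one must choose $v$ so that the parity obstruction kills membership in $h_{\pm}^{\wedge}A_1$ while the ordering obstruction simultaneously kills membership in $h_{\pm}^{\wedge}A_2$. The choice ``skip $a_4$, put $-1$ on the rest, and spend the freed unit of weight on $a_1=0$'' is exactly what makes both work, and the verifications $v\neq \min_+(h_{\pm}^{\wedge}A_2)$, $v>\min(h_{\pm}^{\wedge}A_2)$, and $v\not\equiv a_2+\cdots+a_{h+1}\pmod 2$ all use $a_3=2a_2$ and $a_4=3a_2$ in an essential way.
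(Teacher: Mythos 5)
Your proof is correct, and it takes a genuinely shorter route than the paper's. The paper proves the bound by showing $\sum_{t=0}^{h-4}\alpha_t\geq h$ outright, which forces a long case analysis on the multiset of gaps $a_{i+1}-a_i$ for $i\in[4,h]$ (Cases 1--2 with several subcases indexed by how many gaps equal $a_2$, $a_3$, $a_4$ or none of these); a few of those subcases cannot reach $\sum\alpha_t\geq h$ and are instead discharged by falling back on Lemma \ref{rssn-lem21}, Lemma \ref{rssn-lem22}, or explicit computations such as $|5_{\pm}^\wedge\{0,1,2,3,5,7\}|\geq 32$. You instead observe that $A\neq a_2\ast[0,h]$ forces a single non-$a_2$ gap, that every branch of Lemma \ref{rssn-lem16}(2),(4),(5) already gives $\alpha_{t_0}\geq 2$ for that gap (including at the boundary indices $t_0=0,h-4$), and hence $\sum\alpha_t\geq h-2$; you then recover the missing two elements uniformly with the element $v=-(a_2+a_3+a_5+\cdots+a_{h+1})$, which is excluded from $h_{\pm}^\wedge A_1$ by parity (since $a_4=3a_2$ is odd) and from $h_{\pm}^\wedge A_2$ by the ordering argument (both gaps to $\min$ and $\min_+$ equal $2a_2>0$, using $a_3=2a_2$, $a_4=3a_2$), with $-v$ excluded by symmetry and $v\neq -v$ since $v<0$. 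The arithmetic checks out: $h^2+2+(h-2)+2=h^2+h+2$. What your approach buys is a substantial simplification --- the detailed numerology of Lemma \ref{rssn-lem16} beyond the uniform bound $\alpha_{t_0}\geq 2$ becomes unnecessary for this lemma, and Lemmas \ref{rssn-lem21} and \ref{rssn-lem22} (which use exactly your $\pm v$ device on specific sets) are absorbed into the general argument; the equality case $A=a_2\ast[0,h]$ is handled identically to the paper via Lemma \ref{rssn-lem18} and scaling.
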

	
	\begin{proof}
		Since $|h_{\pm}^\wedge A| \geq h^2 + 2 + \sum_{n = 0}^{h - 4} \alpha_n$, it follows that if $A \neq a_2 \ast [0, h]$, then it is sufficient to show that
		\[\sum_{n = 0}^{h - 4} \alpha_n \geq h.\]
		Now we consider the following case.
		
		\noindent {\textbf{Case 1}} ($a_{i + 1} - a_i \notin \{a_2, a_3, a_4\}$ for some $i \in [4, h]$). Now we consider the following case.
		
		\noindent {\textbf{Case 1.1}} ($i = 4$). In this case, we have 
		\[a_5 - a_4 \notin \{a_2, a_3, a_4\}.\]
		By applying Lemma \ref{rssn-lem16} (See $(1)$ and $(5)$), we get
		\[\alpha_{h - 4} \geq 3.\]
		\begin{enumerate}
			\item If $a_6 - a_5 = a_2$, then by applying Lemma \ref{rssn-lem16} (See $(1)$ and $(6)$), we get
			\[\alpha_{h - 4} + \alpha_{h - 5} \geq 5.\]
			Hence
			\[\sum_{n = 0}^{h - 4} \alpha_n = \sum_{\substack{n = 0 \\ n \neq h - 4, h - 5}}^{h - 4} \alpha_n + \alpha_{h - 4} + \alpha_{h - 5} \geq (h - 5) + 5 = h.\]
			\item If $a_6 - a_5 = a_3$, then by applying Lemma \ref{rssn-lem16} (See $(1)$, $(2)$ and $(5)$), we get
			\[\alpha_{h - 4} \geq 3 ~\text{and}~ \alpha_{h - 5} \geq 2.\]
			Hence
			\[\sum_{n = 0}^{h - 4} \alpha_n = \sum_{\substack{n = 0 \\ n \neq h - 4, h - 5}}^{h - 4} \alpha_n + \alpha_{h - 4} + \alpha_{h - 5} \geq (h - 5) + 3 + 2 = h.\]
			\item If $a_6 - a_5 = a_4$, then by applying Lemma \ref{rssn-lem16} (See $(1)$, $(4)$ and $(5)$), we get
			\[\alpha_{h - 4} \geq 3 ~\text{and}~ \alpha_{h - 5} \geq 3.\]
			Hence
			\[\sum_{n = 0}^{h - 4} \alpha_n = \sum_{\substack{n = 0 \\ n \neq h - 4, h - 5}}^{h - 4} \alpha_n + \alpha_{h - 4} + \alpha_{h - 5} \geq (h - 5) + 3 + 3 = h + 1.\]
			\item If $a_6 - a_5 \notin \{a_2, a_3, a_4\}$, then by applying Lemma \ref{rssn-lem16} (See $(1)$ and $(5)$), we get
			\[\alpha_{h - 4} \geq 3 ~\text{and}~ \alpha_{h - 5} \geq 5.\]
			Hence
			\[\sum_{n = 0}^{h - 4} \alpha_n = \sum_{\substack{n = 0 \\ n \neq h - 4, h - 5}}^{h - 4} \alpha_n + \alpha_{h - 4} + \alpha_{h - 5} \geq (h - 5) + 3 + 5 = h + 3.\]
		\end{enumerate}
		
		\noindent {\textbf{Case 1.2}} ($i \in [5, h]$). By applying Lemma \ref{rssn-lem16} (See $(1)$ and $(5)$), we get
		\[\alpha_{h - 4} \geq 5.\]
		Hence
		\[\sum_{j = 0}^{h - 4} \alpha_j = \sum_{j = 0}^{h - 5} \alpha_j + \alpha_{h - 4} \geq (h - 4) + 5 = h + 1.\]
		
		\noindent {\textbf{Case 2}} ($a_{i + 1} - a_i \in \{a_2, a_3, a_4\}$ for each $i \in [4, h]$). Let 
		\begin{align*}
			r &= |\{i \in [4, h] : a_{i + 1} - a_i = a_2\}|,\\
			s &= |\{i \in [4, h] : a_{i + 1} - a_i = a_3\}|,\\
			t &= |\{i \in [4, h] : a_{i + 1} - a_i = a_3\}|.
		\end{align*}
		Then 
		\[r + s + t = h - 3 \geq 2.\]
		Now we consider the following case.
		
		\noindent {\textbf{Case 2.1}} ($t \geq 1$). Let $a_{i + 1} - a_i = a_4$ for some $i \in [4, h]$. Then by applying Lemma \ref{rssn-lem16} (See $(1)$ and $(4)$), we get
		\begin{equation}
			\alpha_{h - i} \geq 
			\begin{cases}
				3, &\text{if $i = h$};\\
				6, &\text{otherwise}.  
			\end{cases}
		\end{equation} 
		If $4 \leq i < h$, then by applying Lemma \ref{rssn-lem16} (See $(1)$ and $(4)$), we get
		\[\alpha_{h - 4} \geq 6.\]
		Hence
		\[\sum_{n = 0}^{h - 4} \alpha_n = \sum_{\substack{n = 0 \\ n \neq h - i}}^{h - 4} \alpha_n + \alpha_{h - i} \geq (h - 4) + 6 = h + 2.\]
		 If $i = h$, then 
		 \[a_{h + 1} - a_h = a_4.\]
		 Now
		 \begin{enumerate}
		 	 \item If $a_h - a_{h - 1} = a_2$, then by applying Lemma \ref{rssn-lem16} (See $(1)$ and $(7)$), we get
		 	 \[\alpha_0 \geq 4.\]
		 	 Hence
		 	 \[\sum_{n = 0}^{h - 4} \alpha_n = \alpha_0 + \sum_{n = 1}^{h - 4} \alpha_n \geq 4 + (h - 4) = h.\]
		 	 \item If $a_h - a_{h - 1} = a_3$, then by applying Lemma \ref{rssn-lem16} (See $(1)$, $(2)$ and $(4)$), we get
		 	 \[\alpha_0 \geq 3 ~\text{and}~ \alpha_1 \geq 2.\]
		 	 Hence
		 	 \[\sum_{n = 0}^{h - 4} \alpha_n = \alpha_0 + \alpha_1 + \sum_{\substack{n = 0 \\ n \neq 0, 1}}^{h - 4} \alpha_n \geq 3 + 2 + (h - 5) = h.\]
		 	 \item If $a_h - a_{h - 1} = a_4$, then by applying Lemma \ref{rssn-lem16} (See $(1)$ and $(4)$), we get
		 	 \[\alpha_0 \geq 3 ~\text{and}~ \alpha_1 \geq 6.\]
		 	 Hence
		 	 \[\sum_{n = 0}^{h - 4} \alpha_n = \alpha_0 + \alpha_1 + \sum_{\substack{n = 0 \\ n \neq 0, 1}}^{h - 4} \alpha_n \geq 3 + 6 + (h - 5) = h + 4.\]
		 	 \item ($a_h - a_{h - 1} \notin \{a_2, a_3, a_4\}$). This case not possible.	
		 \end{enumerate}
		 
		 \noindent {\textbf{Case 2.2}} ($t = 0$).
		 Since $t = 0$, it follows that
		 \[a_{i + 1} - a_i \in \{a_2, a_3\}\]
		 for each $i \in [4, h]$. Now we consider the following subcase.
		 
		 \noindent {\textbf{Subcase 2.2.1}} ($t = 0$ ~\text{and}~ $s \geq 3$).
		 Since $s \geq 3$, it follows that there exist $i, j, k \in [4, h]$ such that 
		 $i < j <k$ and 
		 \[a_{i + 1} - a_i = a_{j + 1} - a_j = a_{k + 1} - a_k = a_3.\]
		 By by applying Lemma \ref{rssn-lem16} (See $(1)$ and $(2)$), we get
		 \[\alpha_{h - i} \geq 2, \alpha_{h - j} \geq 3 ~\text{and}~ \alpha_{h - k} \geq 2.\]
		 Hence
		 \[\sum_{n = 0}^{h - 4} \alpha_n = \sum_{\substack{n = 0 \\ n \neq h - i, h - j, h - k}}^{h - 4} \alpha_n + \alpha_{h - i} + \alpha_{h - j} + \alpha_{h - k} \geq (h - 6) + 2 + 3 + 2 = h + 1.\]
		 
		 \noindent {\textbf{Subcase 2.2.2}} ($t = 0$ ~\text{and}~ $s = 2$).
		 Since $s = 2$, it follows that there exist exactly two integer $i, j \in [4, h]$ such that $i < j$ and 
		 \[a_{i + 1} - a_i = a_{j + 1} - a_j = a_3.\]
		 Also,
		 \[a_{k + 1} - a_k = a_2,\]
		 where $k \in [1, h] \setminus \{i, j\}$. Now by applying Lemma \ref{rssn-lem16} (See $(1)$ and $(2)$), we get
		 \[\alpha_{h - i} \geq 2 ~\text{and}~ \alpha_{h - j} \geq 2.\]
		 \begin{enumerate}
		 	\item If $h = 5$, then 
		 	\[a_6 - a_5 = a_3 = a_5 - a_4.\]
		 	Hence
		 	\[A = a_2 \ast \{0, 1, 2, 3, 5, 7\}.\]
		 	Since $|h_{\pm}^\wedge A| = |h_{\pm}^\wedge \{0, 1, 2, 3, 5, 7\}|$ and $|h_{\pm}^\wedge \{0, 1, 2, 3, 5, 7\}| \geq 32$, it follows that
		 	\[|h_{\pm}^\wedge A| \geq 32.\]
		 	\item If $h \geq 6$, $i = 4$ and $j = h$, then 
		 	\[a_h - a_{h -1} = a_2.\]
		 	By applying Lemma \ref{rssn-lem16} (See $(1)$, $(2)$ and $(3)$), we get
		 	\[\alpha_0 \geq 3 ~\text{and}~ \alpha_{h - 4} \geq 2.\]
		 	Hence
		 	\[\sum_{n = 0}^{h - 4} \alpha_n = \alpha_0 + \sum_{\substack{n = 0 \\ n \neq 0, h - 4}}^{h - 4} \alpha_n + \alpha_{h - 4} \geq 3 + (h - 5) + 2 = h.\]
		 	\item If $4 \leq i < j < h$, then by applying Lemma \ref{rssn-lem16} (See $(1)$ and $(2)$), we get
		 	\[\alpha_{h - i} \geq 2 ~\text{and}~ \alpha_{h - j} \geq 3.\]
		 	Hence
		 	\[\sum_{n = 0}^{h - 4} \alpha_n = \sum_{\substack{n = 0 \\ n \neq h - i, h - j}}^{h - 4} \alpha_n + \alpha_{h - i} + \alpha_{h - j} \geq (h - 5) + 2 + 3 = h.\]
		 \end{enumerate}
		 
		 \noindent {\textbf{Subcase 2.2.3}} ($t = 0$ ~\text{and}~ $s = 1$).
		 Since $s = 1$, it follows that there exist  $i_0 \in [4, h]$ such that  
		 \[a_{i_0 + 1} - a_{i_0}= a_3 \]
		 and
		 \[a_{i + 1} - a_i = a_2\]
		 for each $i \in [1, h] \setminus \{i_0\}$. 
		 \begin{enumerate}
		 	\item If $i_0 = 4$, then
		 	\[A = a_2 \ast \{0, 1, 2, 3, 5, 6, \ldots, h + 1\}.\]
		 	Since $|h_{\pm}^\wedge A| = |h_{\pm}^\wedge \{0, 1, 2, 3, 5, 6, \ldots, h + 1\}|$, it follows from Lemma \ref{rssn-lem21} that 
		 	\[|h_{\pm}^\wedge A| \geq h^2 + h + 2.\]
		 	\item f $i_0 = h$, then
		 	\[A = a_2 \ast \{0, 1, 2, 3, \ldots, h - 1, h + 1\}.\]
		 	Since $|h_{\pm}^\wedge A| = |h_{\pm}^\wedge \{0, 1, 2, 3, \ldots, h - 1, h + 1\}|$, it follows from Lemma \ref{rssn-lem22} that 
		 	\[|h_{\pm}^\wedge A| \geq h^2 + h + 3.\]
		 	\item If $4 < i_0 < h$, then we have
		 	\[a_{i_0} - a_{i_0 - 1} = a_2.\]
		 	By applying Lemma \ref{rssn-lem16} (See $(1)$ and $(4)$), we get
		 	\[\alpha_{h - i_0} \geq 4.\]
		 	Hence
		 	\[\sum_{n = 0}^{h - 4} \alpha_n = \sum_{\substack{n = 0 \\ n \neq h - i_0}}^{h - 4} \alpha_n + \alpha_{h - i_0} \geq (h - 4) + 4 = h.\]
		 \end{enumerate}
		 
		 \noindent {\textbf{Subcase 2.2.4}} ($t = 0$ ~\text{and}~ $s = 0$).
		 Since $t =0$ and $s = 0$, it follows that 
		 \[a_{i + 1} - a_i = a_2\]
		 for each $i \in [1, h]$. Hence
		 \[A = a_2 \ast [0, h],\] 
and so it follows from Lemma \ref{rssn-lem18} that 
		 \[|h_{\pm}^\wedge A| = |h_{\pm}^\wedge [0, h]| = h^2 + h + 1.\]
	\end{proof}
	
	Combining Lemma \ref{rssn-lem13}, Lemma \ref{rssn-lem14}, Lemma \ref{rssn-lem20}, Lemma \ref{rssn-lem15}, Lemma \ref{rssn-lem18} and Lemma \ref{rssn-lem17}, we get the following Lemma.
	\begin{lemma}\label{rssn-lem23}
		Let $h \geq 4$ be an integer. Let $A = \{a_1, a_2, \ldots, a_{h + 1}\}$ be a set of nonnegative integers such that $0 = a_1 < a_2 < \cdots < a_{h + 1}$. Furthermore, assume that
		\[a_2 \not \equiv 0 \pmod {2}, a_3 = 2a_2.\]
		Then, we have 
		\[|h_{\pm}^\wedge A| \geq h^2 + h + 2,\]
		when either $A \neq a_2 \ast [0, h]$ or $A \neq a_2 \ast \{0, 1, 2, 4, 6\}$. Otherwise, 
		\[|h_{\pm}^\wedge A| = h^2 + h + 1.\]
	\end{lemma}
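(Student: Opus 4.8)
The plan is to prove the statement by a case analysis on the fourth smallest element $a_4$, since the hypotheses already pin down $a_1=0$, the odd number $a_2$, and $a_3=2a_2$. Because $a_4>a_3=2a_2$, there are exactly four mutually exclusive and exhaustive cases: (i) $a_4$ even with $a_4\neq 4a_2$; (ii) $a_4$ even with $a_4=4a_2$; (iii) $a_4$ odd with $a_4\neq 3a_2$; (iv) $a_4$ odd with $a_4=3a_2$. The point is that each of these four cases is precisely the hypothesis of one of the auxiliary lemmas proved above, so the argument reduces to quoting them and keeping track of the extremal sets.

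First I would dispose of cases (i)--(iii). In case (i), Lemma \ref{rssn-lem13} gives $|h_{\pm}^\wedge A|\geq h^2+2h-2$, and since $h\geq 4$ one has $h^2+2h-2\geq h^2+h+2$ (with equality only at $h=4$). In case (iii), Lemma \ref{rssn-lem20} gives $|h_{\pm}^\wedge A|\geq h^2+h+2$ outright. In case (ii), Lemma \ref{rssn-lem14} gives $|h_{\pm}^\wedge A|\geq h^2+h+2$ except that for $h=4$ the set $A=a_2\ast\{0,1,2,4,6\}$ has $|4_{\pm}^\wedge A|=21=h^2+h+1$; one also checks that the distinguished set $a_2\ast\{0,1,2,4,\ldots,2h-2\}$ appearing there for $h\geq 5$ still obeys $\tfrac32 h(h-1)+3>h^2+h+2$. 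So after cases (i)--(iii) the only extremal set is $a_2\ast\{0,1,2,4,6\}$, and only when $h=4$.

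Then I would treat case (iv), splitting once more according to whether $h=4$ or $h\geq 5$. For $h=4$, Lemma \ref{rssn-lem15} shows $|4_{\pm}^\wedge A|$ equals $21=h^2+h+1$ exactly when $A=a_2\ast\{0,1,2,3,4\}=a_2\ast[0,h]$, equals $23$ or $25$ for the two sets $a_2\ast\{0,1,2,3,5\}$ and $a_2\ast\{0,1,2,3,6\}$, and is at least $22=h^2+h+2$ for every other $A$; since those two sets are neither $a_2\ast[0,4]$ nor $a_2\ast\{0,1,2,4,6\}$, they contribute nothing new. For $h\geq 5$, Lemma \ref{rssn-lem17} gives $|h_{\pm}^\wedge A|\geq h^2+h+2$ whenever $A\neq a_2\ast[0,h]$, and for $A=a_2\ast[0,h]$ the scaling identity $|h_{\pm}^\wedge(c\ast A)|=|h_{\pm}^\wedge A|$ together with Lemma \ref{rssn-lem18} yields $|h_{\pm}^\wedge A|=h^2+h+1$. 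Assembling the four cases, $|h_{\pm}^\wedge A|\geq h^2+h+2$ unless $A=a_2\ast[0,h]$ (for any $h\geq 4$) or $A=a_2\ast\{0,1,2,4,6\}$ (only for $h=4$), and in those exceptional cases $|h_{\pm}^\wedge A|=h^2+h+1$.

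There is no genuinely hard step here: the content lives entirely in the auxiliary lemmas, and the only thing demanding care is verifying that the four cases on $a_4$ are exhaustive and that the ``almost-extremal'' sets produced en route ($a_2\ast\{0,1,2,3,5\}$, $a_2\ast\{0,1,2,3,6\}$, $a_2\ast\{0,1,2,4,5\}$, and $a_2\ast\{0,1,2,4,\ldots,2h-2\}$) are all distinct from the two true extremal sets and all satisfy the bound $h^2+h+2$, alongside the elementary inequalities $h^2+2h-2\geq h^2+h+2$ for $h\geq 4$ and $\tfrac32 h(h-1)+3\geq h^2+h+2$ for $h\geq 5$.
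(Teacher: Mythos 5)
Your proposal is correct and is essentially the paper's own argument made explicit: the paper simply states that the lemma follows by combining Lemmas \ref{rssn-lem13}, \ref{rssn-lem14}, \ref{rssn-lem20}, \ref{rssn-lem15}, \ref{rssn-lem18} and \ref{rssn-lem17}, and your four-way case split on $a_4$ (even/odd, special value or not) is exactly the exhaustive partition those lemmas cover, with the arithmetic checks $h^2+2h-2\geq h^2+h+2$ for $h\geq 4$ and $\tfrac32 h(h-1)+3\geq h^2+h+2$ for $h\geq 5$ carried out correctly.
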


The following lemma is the part of the result contained in \cite{mmp2024} which will be useful to prove Theorem \ref{rssn-thm-2}.
\begin{lemma}[{\cite[Theorem $6$]{mmp2024}}]\label{thm:10}
Let $h$ and $k$ be positive integers such that $4 \leq h \leq k - 1$. Let $A = \{0, a_2,  \ldots, a_k\}$ be the set of nonnegative integers with $0 < a_2 < \cdots < a_k$ such that
\[|h_{\pm}^\wedge A| = 2hk - h(h + 1) + 1\]
If the set $B = \{0, a_2,  \ldots, a_{h + 1}\} \subseteq A$ is an arithmetic progression, then
\[A = a_2 \ast [0, k - 1].\]
\end{lemma}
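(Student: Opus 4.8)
Since the statement is \cite[Theorem~6]{mmp2024}, it suffices to indicate the strategy; the plan is to build the arithmetic progression up one element at a time starting from $B$, and to show that the equality hypothesis forces each newly adjoined element to extend the progression.

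\textbf{Set-up and monotonicity.} Because $B=\{0,a_2,\ldots,a_{h+1}\}$ is an arithmetic progression whose smallest term is $0$, we have $B=a_2\ast[0,h]$, so $a_i=(i-1)a_2$ for $1\le i\le h+1$. For $h+1\le j\le k$ put $A_j=\{0,a_2,\ldots,a_j\}\subseteq A$, so that $A_{h+1}=B$ and $A_k=A$. Assigning the coefficient $0$ to every element outside $A_j$ shows $h_{\pm}^\wedge A_j\subseteq h_{\pm}^\wedge A_{j+1}$, whence $|h_{\pm}^\wedge A_{h+1}|\le|h_{\pm}^\wedge A_{h+2}|\le\cdots\le|h_{\pm}^\wedge A_k|$.

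\textbf{The increment estimate.} The core is the claim that for any finite set $C=\{0,c_2,\ldots,c_m\}$ with $m\ge h$ and any integer $c>c_m$ one has
\[
|h_{\pm}^\wedge(C\cup\{c\})|\ \ge\ |h_{\pm}^\wedge C|+2h,\qquad(\star)
\]
and that, when in addition $C=d\ast[0,m-1]$ is an arithmetic progression, equality holds in $(\star)$ if and only if $c=dm$, i.e.\ $C\cup\{c\}=d\ast[0,m]$. For $(\star)$ one decomposes according to the coefficient on $c$:
\[
h_{\pm}^\wedge(C\cup\{c\})=h_{\pm}^\wedge C\ \cup\ \bigl(c+(h-1)_{\pm}^\wedge C\bigr)\ \cup\ \bigl(-c+(h-1)_{\pm}^\wedge C\bigr).
\]
Since $c$ exceeds every element of $C$ and $0\in C$, one singles out $h$ pairwise distinct subset sums of $C$ — obtained from the $h-1$ largest elements of $C$ by successively replacing the smallest chosen summand with a smaller element of $C$, the element $0$ included — whose $c$-translates are all strictly above $\max h_{\pm}^\wedge C$, hence new; applying $x\mapsto -x$ and using that $(h-1)_{\pm}^\wedge C$ is symmetric produces $h$ further new elements below $\min h_{\pm}^\wedge C$, which gives $(\star)$. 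When $C=d\ast[0,m-1]$, the sumsets $h_{\pm}^\wedge C$ and $(h-1)_{\pm}^\wedge C$ are full arithmetic progressions with common difference $d$ (a version of Lemma~\ref{rssn-lem18}), namely $h_{\pm}^\wedge C=d\ast[-N,N]$ and $(h-1)_{\pm}^\wedge C=d\ast[-N',N']$ with $N=hm-\binom{h+1}{2}$ and $N'=(h-1)m-\binom{h}{2}$; if $d\nmid c$ the two translated progressions avoid the lattice $d\mathbb{Z}$ and so are disjoint from $h_{\pm}^\wedge C$, forcing a gain larger than $2h$, while if $c=d\ell$ with $\ell\ge m$ a direct count of the union of the three dilated intervals gives a gain of exactly $2(\ell-m+h)$, which equals $2h$ precisely when $\ell=m$.

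\textbf{Iteration and conclusion.} Summing $(\star)$ over $j=h+1,\ldots,k-1$ and using Theorem~\ref{rssn-thm-3} (which gives $|h_{\pm}^\wedge B|\ge h^2+h+1$) we obtain
\[
|h_{\pm}^\wedge A|\ \ge\ |h_{\pm}^\wedge B|+2h(k-h-1)\ \ge\ (h^2+h+1)+2h(k-h-1)\ =\ 2hk-h(h+1)+1.
\]
If $|h_{\pm}^\wedge A|=2hk-h(h+1)+1$, then every inequality above is an equality; in particular $|h_{\pm}^\wedge A_{j+1}|=|h_{\pm}^\wedge A_j|+2h$ for each $j$. We now argue by induction on $j$ that $A_j=a_2\ast[0,j-1]$: the base case $j=h+1$ is the hypothesis that $B$ is an arithmetic progression, and if $A_j=a_2\ast[0,j-1]$ then equality in $(\star)$ at the step $A_j\mapsto A_j\cup\{a_{j+1}\}$ forces, by the equality analysis above, $a_{j+1}=j\,a_2$, i.e.\ $A_{j+1}=a_2\ast[0,j]$. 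Taking $j=k$ gives $A=a_2\ast[0,k-1]$, as claimed. The main obstacle is the inequality $(\star)$ for an arbitrary set $C$: the delicate point is to choose the $h$ subset sums of $C$ explicitly and to verify that their $c$-translates clear the threshold $\max h_{\pm}^\wedge C$, which is genuinely non-obvious precisely because, when $C$ is not itself an arithmetic progression, the top of $(h-1)_{\pm}^\wedge C$ need not be a block of consecutive integers; the equality analysis, by contrast, becomes routine once $C$ is known to be an arithmetic progression and the relevant sumsets are genuine intervals.
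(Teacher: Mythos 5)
First, a point of comparison: the paper does not prove this lemma at all — it is imported verbatim as \cite[Theorem 6]{mmp2024} — so you are supplying a proof where the paper supplies a citation. Your overall strategy (adjoin the elements $a_{h+2},\ldots,a_k$ one at a time, prove an increment inequality $|h_{\pm}^\wedge(C\cup\{c\})|\ge |h_{\pm}^\wedge C|+2h$, and show that equality at a step where $C=d\ast[0,m-1]$ forces $c=dm$) is sound: the decomposition by the coefficient of $c$ is exact, the telescoped lower bound matches $2hk-h(h+1)+1$, the interval descriptions $h_{\pm}^\wedge(d\ast[0,m-1])=d\ast[-N,N]$ follow from results already in this paper, and the equality analysis (gain $2(\ell-m+h)$ in the overlapping case, gain at least $2(2N'+1)>2h$ otherwise) is correct. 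So the skeleton would yield a valid self-contained proof.

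There is, however, one genuine gap, precisely at the point you flag as delicate: your recipe for the $h$ new elements of $c+(h-1)_{\pm}^\wedge C$ does not work. Write $\sigma$ for the sum of the $h$ largest elements of $C$, so $\max h_{\pm}^\wedge C=\sigma$ and you need $h$ distinct $x\in(h-1)_{\pm}^\wedge C$ with $x>\sigma-c$. Starting from the $h-1$ largest elements and ``replacing the smallest chosen summand with a smaller element of $C$, the element $0$ included'' produces sums that can fall below this threshold: for $C=\{0,10,11,12,13\}$, $h=4$, $c=14$, the replacement by $0$ gives $13+12+0=25$, whose translate $39$ is well below $\sigma=46$, so that sum is useless (and only the first two sums your procedure generates are guaranteed to clear the bar). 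The correct choice is different: take the $h$ sums $\sigma-c_j$ obtained by deleting one element $c_j$ from the set of the $h$ largest elements of $C$. These are $h$ pairwise distinct elements of $(h-1)^\wedge C\subseteq(h-1)_{\pm}^\wedge C$, each satisfying $\sigma-c_j\ge\sigma-\max C>\sigma-c$, so their $c$-translates all exceed $\max h_{\pm}^\wedge C$; together with their negatives (using the symmetry of $(h-1)_{\pm}^\wedge C$ and the fact that $\max(-c+(h-1)_{\pm}^\wedge C)<\sigma$) this yields the $2h$ new elements and hence $(\star)$. With that substitution your argument closes; as written, the key inequality is asserted with an incorrect witness set.
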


\section{Proof of Main Theorems}\label{section-rrsn-thm-proof}	

\begin{proof}[Proof of Theorem \ref{rssn-thm-3}]
Let $A = \{a_1, a_2,\ldots, a_{h + 1}\}$, where $0 = a_1 < a_2 < \cdots < a_{h+1}$. Since 
\[h_{\pm}^\wedge (d \ast A)| = |d \ast h_{\pm}^\wedge A| = |h_{\pm}^\wedge A|\]
for all positive integers $d$, we may assume that 
\[\gcd(a_1, a_2, \ldots, a_{h+1}) = 1.\]
The theorem holds for $h = 3$ as proved in \cite{bhanja-kom-pandey2021}. Therefore, we assume that $h \geq 4$. 
		
\noindent {\textbf{Case 1}} ($a_3 \equiv a_2 \pmod 2$). In this case, if $a_r \not \equiv a_2 \pmod 2$ for some $r \in [4, h + 1]$, then it follows from Lemma \ref{rssn-lem9} that
\[|h_{\pm}^\wedge A| \geq h^2 + h + 2 > h^2 + h + 1.\]
		
If $a_2 \equiv a_3 \equiv \cdots \equiv a_{h + 1} \equiv 0 \pmod$ and $a_2 \not \equiv 0 \pmod 2$, then $A \setminus \{0\} = \{a_2, \ldots, a_{h + 1}\}$ is a set of odd integers. Hence Lemma \ref{rssn-lem10} implies that 
\[|h_{\pm}^\wedge A| \geq 2h(h - 1) - 1 > h^2 + h + 1.\]
		
Finally, if $a_2 \equiv a_3 \equiv \cdots \equiv a_{h + 1} \pmod 2$ and $a_2 \equiv 0 \pmod 2$), then 
\[a_2 \equiv a_3 \equiv \cdots \equiv a_{h + 1} \equiv 0 \pmod 2,\]
and so $\gcd (a_1, a_2, \ldots, a_{h + 1}) \geq 2$, which is a contradiction. Hence this case can not occur.
		
		\noindent {\textbf{Case 2}} ($a_3 \not \equiv a_2 \pmod 2$). Consider the following case.
		
		\noindent {\textbf{Case 2.1}} ($a_3 \not \equiv a_2 \pmod 2$ and $a_2 \equiv 0 \pmod 2$). In this case $a_3 \not \equiv 0 \pmod 2$ and so Lemma \ref{rssn-lem11}, implies that 
		\[|h_{\pm}^\wedge A| \geq h^2 + h + 2 > h^2 + h + 1.\]
		
		\noindent {\textbf{Case 2.2}} ($a_3 \not \equiv a_2 \pmod 2$ and $a_2 \not \equiv 0 \pmod 2$). In this case $a_3 \equiv 0 \pmod 2$. Now if $a_3 \neq 2a_2$, then Lemma \ref{rssn-lem12}, implies that 
		\begin{equation}
			|h_{\pm}^\wedge A| \geq 
			\begin{cases}
				23, &\text{if $h = 4$};\\
				h^2 + h + 2, &\text{if $A_2$ is not in an arithmetic progression and $h \geq 5$};\\
				\frac{3}{2} h(h - 1) + 3, &\text{if $A_2$ is an arithmetic progression and $h \geq 5$.}  
			\end{cases}
		\end{equation} 
		Hence,
		\[|h_{\pm}^\wedge A| \geq h^2 + h + 2 > h^2 + h + 1,\]
		for all $h \geq 4$. 
		
		If $a_3 = 2a_2$, then Lemma \ref{rssn-lem23}, implies that 
		\[|h_{\pm}^\wedge A| \geq h^2 + h + 1.\]
		Therefore, in all cases, we have
		\[|h_{\pm}^\wedge A| \geq h^2 + h + 1.\]

	To show that the lower bound in $\eqref{rssn-thm-3-eq1}$ is best possible, consider the set $A = [0, h]$. Then it follows from Lemma \ref{rssn-lem15} and Lemma \ref{rssn-lem17} that
		\[|h_{\pm}^\wedge A| = h^2 + h + 1.\]		
		This completes the proof. 
	\end{proof}

    \begin{proof}[Proof of Theorem \ref{rssn-thm-1}]  
    	By applying Lemma \ref{rssn-basic-lem1} and Theorem \ref{rssn-thm-3}, we get
    	\[|h_{\pm}^\wedge A| \geq 2hk - h^2 - h + 1.\]
    This establishes the lower bound $\eqref{rssn-thm-1-eq1}$ in Theorem \ref{rssn-thm-1}.
    
    	To show that the lower bound in $\eqref{rssn-thm-1-eq1}$ is best possible, consider the set $A = [0, k - 1]$. Then
    	\[\min (h_{\pm}^\wedge A) = - (k - 1) - \cdots - (k - h) = - hk + \frac{h(h + 1)}{2},\]
    	and 
    	\[\max (h_{\pm}^\wedge A) = (k - 1) + \cdots + (k - h) = hk - \frac{h(h + 1)}{2}.\]
    	Hence
    	\[h_{\pm}^\wedge A \subseteq \biggl[- hk + \frac{h(h + 1)}{2}, hk - \frac{h(h + 1)}{2} \biggl],\]
    	and so
    	\[|h_{\pm}^\wedge A| \leq 2hk - h^2 - h + 1.\]
    	But 
    	\[|h_{\pm}^\wedge A| \geq 2hk - h^2 - h + 1.\]
    	Therefore,
    	\[|h_{\pm}^\wedge A| = 2hk - h^2 - h + 1.\]		
    	This completes the proof. 
    \end{proof}

To prove Theorem \ref{rssn-thm-2}, we need the following lemma also.
\begin{lemma}\label{rssn-lem19}
		Let $h = 4$ and $k \geq 6$ be positive integers. Let $A = \{a_1, a_2, \ldots, a_k\}$ be a set of nonnegative integers such that $0 = a_1 < a_2 < \cdots < a_k$. If
		\[|4_{\pm}^\wedge A| = 8k - 19,\]
	    then $A = a_2 \ast [0, k - 1]$.
\end{lemma}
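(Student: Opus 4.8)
The plan is to pass from $A$ to its five smallest elements $B=\{a_1,\dots,a_5\}$, pin down the structure of $B$, and then treat the two resulting cases. Since $8k-19=2hk-h^{2}-h+1$ for $h=4$, and $|4_{\pm}^\wedge B|\ge h^{2}+h+1=21$ by Theorem~\ref{rssn-thm-3}, I would write $|4_{\pm}^\wedge B|=21+t$ with $t\ge 0$ and apply Lemma~\ref{rssn-basic-lem1}, obtaining $8k-19=|4_{\pm}^\wedge A|\ge 8k-19+t$, so $t=0$ and $|4_{\pm}^\wedge B|=21$. After dilating so that $\gcd(a_2,\dots,a_5)=1$, the case analysis in the proof of Theorem~\ref{rssn-thm-3} (Case~1 handled by Lemmas~\ref{rssn-lem9} and~\ref{rssn-lem10}, Case~2.1 by Lemma~\ref{rssn-lem11}, Case~2.2 by Lemma~\ref{rssn-lem12} and Lemma~\ref{rssn-lem23}) shows that every possibility other than two forces $|4_{\pm}^\wedge B|\ge 22$; hence $B=a_2\ast[0,4]$ or $B=a_2\ast\{0,1,2,4,6\}$.

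If $B=a_2\ast[0,4]$, then $B=\{0,a_2,\dots,a_5\}$ is an arithmetic progression, and Lemma~\ref{thm:10} (applicable since $4\le h\le k-1$ and $|4_{\pm}^\wedge A|=2hk-h(h+1)+1$) immediately gives $A=a_2\ast[0,k-1]$, which is the claim. The real work is to rule out $B=a_2\ast\{0,1,2,4,6\}$ when $k\ge 6$; after scaling we may take $a_2=1$, so $A=\{0,1,2,4,6,a_6,\dots,a_k\}$ with $a_6\ge 7$. Put $D=\{0,1,2,4,6\}$. The first step is to show that the six smallest elements already overshoot the extremal value: splitting according to whether the coefficient of $a_6$ is $0$, $+1$ or $-1$ gives
\[
4_{\pm}^\wedge(D\cup\{a_6\})=4_{\pm}^\wedge D\cup\bigl(a_6+3_{\pm}^\wedge D\bigr)\cup\bigl(-a_6+3_{\pm}^\wedge D\bigr),
\]
and since $4_{\pm}^\wedge D$ has $21$ elements, all lying in $[-13,13]$, while $\{7,8,9,10,11,12\}\subseteq 3_{\pm}^\wedge D$ (witnesses $1+2+4$, $0+2+6$, $1+2+6$, $0+4+6$, $1+4+6$, $2+4+6$) and hence also $\{-12,\dots,-7\}\subseteq 3_{\pm}^\wedge D$ by symmetry, the twelve integers $\pm(a_6+7),\dots,\pm(a_6+12)$ are new (they have absolute value $\ge 14$ and lie in at most one of the two translates); thus $|4_{\pm}^\wedge(D\cup\{a_6\})|\ge 33$.

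To finish, I would append $a_7,\dots,a_k$ one at a time and use the ``extremal translate'' estimate underlying Lemma~\ref{rssn-basic-lem1}: if $s$ exceeds every element of a finite integer set $S$ with $|S|\ge h$, then the $h$ largest elements of $s+(h-1)_{\pm}^\wedge S$ lie above $\max h_{\pm}^\wedge S$ and the $h$ smallest of $-s+(h-1)_{\pm}^\wedge S$ lie below $\min h_{\pm}^\wedge S$, so these $2h$ elements are new and $|h_{\pm}^\wedge(S\cup\{s\})|\ge|h_{\pm}^\wedge S|+2h$. Iterating from $D\cup\{a_6\}$ yields $|4_{\pm}^\wedge A|\ge 33+8(k-6)=8k-15>8k-19$, a contradiction; hence the case $B=a_2\ast\{0,1,2,4,6\}$ does not arise, and $A=a_2\ast[0,k-1]$ in all cases. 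I expect the main obstacle to be the bookkeeping in this exceptional case (in particular verifying the ``$+12$'' gain for $D\cup\{a_6\}$ uniformly in $a_6\ge 7$); the reduction to $B$ and the arithmetic‑progression case follow directly from the results already available.
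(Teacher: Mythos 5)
Your proof is correct, and it takes a genuinely different route from the paper's in the step that matters. The reduction to $|4_{\pm}^\wedge B|=21$ via Lemma~\ref{rssn-basic-lem1} and the identification of the two candidates for $B$ agree in substance with the paper, which reaches the same point through the explicit decomposition $4^{\wedge}(-A_1)\cup 4_{\pm}^\wedge A_6\cup 4^{\wedge}A_1$ for $k=6$ and its analogue for $k\geq 7$ together with Theorem~\ref{restricted-hfold-direct-thm}; your one-line appeal to Lemma~\ref{rssn-basic-lem1} is cleaner and uniform in $k$. The divergence is in eliminating $B=a_2\ast\{0,1,2,4,6\}$: the paper splits into $k=6$, where it pins down $a_6=a_5+a_2$ by a gap argument and then checks $|4_{\pm}^\wedge\{0,1,2,4,6,7\}|\geq 30$ directly, and $k\geq 7$, where it forces $|4^{\wedge}(A\setminus\{0\})|=4k-19$ and invokes the inverse theorem for restricted sumsets (Theorem~\ref{restricted-hfold-inverse-thm}) to make $A\setminus\{0\}$ an arithmetic progression, which is incompatible with the exceptional $B$. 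You instead give a uniform elementary count: your witnesses for $\{7,\ldots,12\}\subseteq 3_{\pm}^\wedge D$ check out, the separation $2a_6+7a_2>\max 3_{\pm}^\wedge D$ keeps the two translates of $3_{\pm}^\wedge D$ apart, and the translate estimate $|h_{\pm}^\wedge(S\cup\{s\})|\geq |h_{\pm}^\wedge S|+2h$ is sound (the $h$ sums $s+T-b_i$, with $T$ the sum of the $h$ largest elements $b_i$ of $S$ and $s>\max S$, all exceed $\max h_{\pm}^\wedge S=T$, and symmetrically below the minimum), giving $|4_{\pm}^\wedge A|\geq 33+8(k-6)=8k-15$, a contradiction. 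This buys you a proof that avoids both the case split at $k=7$ and any appeal to Theorem~\ref{restricted-hfold-inverse-thm}; the paper's route gets the arithmetic-progression structure of $A\setminus\{0\}$ essentially for free once the extremal count is forced, but only for $k\geq 7$. Your use of Lemma~\ref{thm:10} in the arithmetic-progression case is legitimate, since that lemma is stated for $4\leq h\leq k-1$. One cosmetic caveat: ``scaling so that $a_2=1$'' normalizes only $B$, as $a_6,\ldots,a_k$ need not be multiples of $a_2$; either carry $a_2$ through the count (the new elements are $\pm(a_6+ja_2)$ for $j\in[7,12]$) or observe that the cardinality argument is insensitive to the resulting non-integrality.
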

	    \begin{proof}
	    	First assume that $k = 6$.	In this case 
	    	\[A = \{0, a_2, a_3, a_4, a_5, a_6\}.\]
	    	and 
	    	\[|h_{\pm}^\wedge A| = 29.\]
	    	Let 
	    	\[A_1 = A \setminus \{0\} ~~\text{and}~ A_6 = A \setminus \{a_6\}.\]
	    	Then
	    	\[4^\wedge (- A_1) \cup 4_{\pm}^\wedge A_6 \cup 4^\wedge A_1 \subseteq 4_{\pm}^\wedge A,\]
	    	\[4_{\pm}^\wedge A_6 \cap 4^\wedge A_1 = \{a_2 + a_3 + a_4 + a_5\},\]
	    	and
	    	\[4^\wedge (- A_1) \cap h_{\pm}^\wedge A_6 = \{- a_2 - a_3 - a_4 - a_5\}.\]
	    	Since $|4_{\pm}^\wedge A| = 29$, it follows form Theorem \ref{restricted-hfold-direct-thm} and Theorem Theorem \ref{rssn-thm-1} that
	    	\begin{align*}
	    		29 &= |4_{\pm}^\wedge A|\\
	    		&\geq |4^\wedge (- A_1) \cup 4_{\pm}^\wedge A_6 \cup 4^\wedge A_1|\\
	    		&= |4^\wedge (- A_1)| + |4_{\pm}^\wedge A_6| + |4^\wedge A_1| - 2 \\
	    		&\geq 5 + |4_{\pm}^\wedge A_6| + 5 - 2 \\
	    		&= |4_{\pm}^\wedge A_6| + 8\\
                & \geq 29.
	    	\end{align*}
	    	This implies that
	    	\[|4_{\pm}^\wedge A_6| = 21.\]
	    	Let $A_6 = d \ast A_6'$, where $d = \gcd(a_1, a_2, a_3, a_4, a_5)$ and $A_6' = \{a_1', a_2', a_3', a_4', a_5'\}$. Then $\gcd(a_1', a_2', a_3', a_4', a_5') = 1$.
	    	Since $|4_{\pm}^\wedge A_6'| = |4_{\pm}^\wedge A_6| = 21$, it follows from all the auxiliary lemmas that the equality the equality $|4_{\pm}^\wedge A_6'| = 21$ can occur only in the cases discussed in Lemma \ref{rssn-lem14} and Lemma \ref{rssn-lem15}. These lemmas implies that
	    	\begin{equation*}
	    		~\text{either}~ A_6' = a_2' \ast [0, 4] ~\text{or}~ A_6' = a_2' \ast \{0, 1, 2, 4, 6\}.
	    	\end{equation*}
Since $\gcd(a_1', a_2', a_3', a_4', a_5') = 1$, it follows that $a_2' = 1$, and so $a_2 = da_2' = d$. Therefore,
\begin{equation}\label{rssn-lemeq:20}
	    		~\text{either}~ A_6 = d \ast A_6' = a_2 \ast [0, 4] ~\text{or}~ A_6 = d \ast A_6' = a_2 \ast \{0, 1, 2, 4, 6\}.
	    	\end{equation}
Let $x = 0 + a_3 + a_4 + a_6$. Note that
	    	\[{\max}_{-}(4_{\pm}^\wedge A_6) = 0 + a_3 + a_4 + a_5 < x < {\min}_{+}(4^\wedge A_1)\]
	    	and 
	    	\[{\max}_{-}(4_{\pm}^\wedge A_6) = 0 + a_3 + a_4 + a_5 < a_2 + a_3 + a_4 + a_5 < {\min}_{+}(4^\wedge A_1).\]
	    	Since $|4_{\pm}^\wedge A| = 29$ and $4^\wedge (- A_1) \cup 4_{\pm}^\wedge A_6 \cup 4^\wedge A_1 \subseteq 4_{\pm}^\wedge A$, it follows that
	    	\[x = a_2 + a_3 + a_4 + a_5.\]
	    	Hence
	    	\begin{equation}\label{rssn-lemeq:21}
	    		a_6 = a_5 + a_2
	    	\end{equation}
	    	It follows from \eqref{rssn-lemeq:20} and \eqref{rssn-lemeq:21} that either
	    	\[A = a_2 \ast [0, 5] ~\text{or}~ A = a_2 \ast \{0, 1, 2, 4, 6, 7\}.\]
	    	If $A = a_2 \ast \{0, 1, 2, 4, 6, 7\}$, then
	    	\[|4_{\pm}^\wedge A| = |4_{\pm}^\wedge \{0, 1, 2, 4, 6, 7\}| \geq 30,\]
	    	which is a contradiction. Therefore,
	    	\[A = a_2 \ast [0, 5].\]

	    	Now assume that $k \geq 7$.	Let 
	    	\[B = \{0, a_2, a_3, a_4, a_5\}\]
	    	and 
	    	\[A_1 = \{a_2, \ldots, a_k\}.\]
	    	Similar to the argument for the case $k = 6$, we can show that
	    	\[B = a_2 \ast [0, 4] ~\text{or}~ B = a_2 \ast \{0, 1, 2, 4, 6\},\]
	    	and
	    	\[|4^\wedge A_1| = 4k - 19.\]
	    	Since $k \geq 7$ and $|4^\wedge A_1| = 4k - 19$, it follows Theorem \ref{restricted-hfold-inverse-thm} that $A_1$ is an arithmetic progression. Since $A_1$ is an arithmetic progression, it follows that the only possibility for the set $B$ is $B = a_2 \ast [0, 4]$. Hence
	    	\[A = a_2 \ast [0, k - 1].\]
	    	This completes the proof.
	    \end{proof}
		
\begin{proof}[Proof of Theorem \ref{rssn-thm-2}]
The theorem holds for $h = 3$ as proved in \cite{bhanja-kom-pandey2021}. Therefore, we assume that $h \geq 4$. Let $A = \{a_1, a_2,\ldots, a_k\}$, where $0 = a_1 < a_2 < \cdots < a_k$. Since 
\[h_{\pm}^\wedge (d \ast A)| = |d \ast h_{\pm}^\wedge A| = |h_{\pm}^\wedge A|\]
for all positive integers $d$, we may assume that 
\[\gcd(a_1, a_2, \ldots, a_k) = 1.\]
If $h = 4$ and $k = 5$, then let $A = \{a_1, a_2, a_3, a_4, a_5\}$, where $0 = a_1 < a_2 < a_3 < a_4 < a_5$. Let $A = d \ast A'$, where $A' = \{a_1', a_2', a_3', a_4', a_5'\}$ and $\gcd(a_1', a_2', \ldots, a_5') = 1$. It follows from all the auxiliary lemmas that the equality the equality $|4_{\pm}^\wedge A'| = |4_{\pm}^\wedge A| = 21$ can occur only in the case discussed in Lemma \ref{rssn-lem14} and  Lemma \ref{rssn-lem15}. Lemma \ref{rssn-lem14} and  Lemma \ref{rssn-lem15} implies that
\[\text{either}~A' = a_2' \ast [0, 4] ~\text{or}~ A' = a_2' \ast \{0, 1, 2, 4, 6\}.\]
Since $\gcd(a_1', a_2', a_3', a_4', a_5') = 1$, it follows that $a_2' = 1$, and so $a_2 = da_2' = d$. Therefore,
\[\text{either}~A = d \ast A' = a_2 \ast [0, 4] ~\text{or}~ A = d \ast A' = a_2 \ast \{0, 1, 2, 4, 6\}.\]

Similarly if $h = 4$ and $k \geq 6$, then let $A = \{a_1, a_2, \ldots, a_k\}$, where $0 = a_1 < a_2 < \cdots < a_k$. Let $A = d \ast A'$, where $A' = \{a_1', a_2', \ldots, a_k'\}$ and $\gcd(a_1', a_2', \ldots, a_k') = 1$. It follows from all the auxiliary lemmas that the equality the equality $|4_{\pm}^\wedge A'| = |4_{\pm}^\wedge A| = 8k - 19$ can occur only in the case discussed in Lemma \ref{rssn-lem19}. This lemma implies that
\[A' = a_2' \ast [0, k - 1].\]
Since $\gcd(a_1', a_2', \ldots, a_k') = 1$, it follows that $a_2' = 1$, and so $a_2 = da_2' = d$. Therefore,
\[A = d \ast A' = a_2 \ast [0, k - 1].\]

Now assume that $h \geq 5$. Let $A = \{a_1, \ldots, a_k\}$, where $0 = a_1 < \cdots < a_k$. Let $B = \{a_1, \ldots, a_{h + 1}\} \subseteq A$, and let $A' = A \setminus \{a_1\}$. Then
		\[(-h^{\wedge} A') \cup h_{\pm}^{\wedge} B \cup h^{\wedge} A' \subseteq  h_{\pm}^{\wedge} B.\]
		Since
		\[(-h^{\wedge} A') \cap h_{\pm}^{\wedge} B = \{-a_2 - \cdots - a_{h + 1}\},\]
		and
		\[h_{\pm}^{\wedge} B \cap h^{\wedge} A' = \{a_2 + \cdots + a_{h + 1}\},\]
		it follows that
		\[|h_{\pm}^{\wedge} A| \geq |h_{\pm}^{\wedge} B| + 2 |h^{\wedge} A'| - 2.\]
		Therefore, applying Theorem \ref{restricted-hfold-direct-thm} and Theorem \ref{rssn-thm-3}, we get
		\begin{align*}
			2hk - h^2 - h + 1 = |h_{\pm}^{\wedge} A| & \geq |h_{\pm}^{\wedge} B| + 2 |h^{\wedge} A'| - 2\\
			& \geq |h_{\pm}^{\wedge} B| + 2(h(k - 1) - h^2 + 1) - 2 \\
			& \geq (h^2 + h + 1) + 2(h(k - 1) - h^2 + 1) - 2\\
			& = 2hk - h^2 - h + 1.
		\end{align*}
The above inequalities implies that
\[|h_{\pm}^{\wedge} B| + 2(h(k - 1) - h^2 + 1) - 2  = 2hk - h^2 - h + 1,\]
and so
\[|h_{\pm}^{\wedge} B| = h^2 + h + 1.\]
Now let $d = \gcd (a_1, \ldots, a_{h + 1})$, and let $a_i' = a_i/d$ for $i \in [1, h + 1]$. Then $\gcd (a_1', \ldots, a_{h + 1}') = 1$. Let $B' = \{a_1', \ldots, a_{h + 1}'\}$. Then $B = d \ast B'$, and so
\begin{equation}\label{rssn-thm-2-eq2}
|h_{\pm}^{\wedge} B'| = |h_{\pm}^{\wedge} B| = h^2 + h + 1.
\end{equation}
It follows from all the auxiliary lemmas that the equality in $\eqref{rssn-thm-2-eq2}$ can occur only in the case discussed in Lemma \ref{rssn-lem18} and Lemma \ref{rssn-lem17}. Then Lemma \ref{rssn-lem18} and Lemma \ref{rssn-lem17} implies that $B' = [0, h]$. Thus $d = a_2/ a_2' = a_2$, and so
\[B = d \ast B' = a_2 \ast [0, h].\]
Now since $B$ is an arithmetic progression and $|h_{\pm}^\wedge A| = 2hk - h(h + 1) + 1$, it follows from Lemma \ref{thm:10} that 
\[A = a_2 \ast [0, k - 1].\]
This completes the proof.
\end{proof}

	

\end{document}